\newtheorem{theorem}{Theorem}
\newtheorem{proposition}{Proposition}
\newtheorem{corollary}{Corollary}
\newtheorem{lemma}{Lemma}
\begin{document}
\title{\huge Kummer Generators and Lambda Invariants}    
\author{David Hubbard\thanks{387 Holly Trail,
Crownsville, MD 21032, dhubbard@erols.com}\: and Lawrence C. Washington\thanks{Department of Mathematics,
University of Maryland,
College Park, MD 20742, lcw@math.umd.edu}}
\maketitle

\begin{abstract} Let $F_0=\mathbf Q(\sqrt{-d})$ be an imaginary quadratic field
with $3\nmid d$ and let $K_0=\mathbf Q(\sqrt{3d})$. Let $\varepsilon_0$ be the fundamental unit of $K_0$ and
let $\lambda$ be the Iwasawa $\lambda$-invariant for the cyclotomic $\mathbf Z_3$-extension of $F_0$.
The theory of 3-adic $L$-functions gives conditions for $\lambda\ge 2$ 
in terms of $\epsilon_0$ and the class numbers
of $F_0$ and $K_0$. We construct units of $K_1$, the first level of the 
$\mathbf Z_3$-extension
of $K_0$, that potentially occur as Kummer generators of unramified extensions of
$F_1(\zeta_3)$ and which give an algebraic interpretation of the condition that $\lambda\ge 2$.
We also discuss similar results on $\lambda\ge 2$ that arise from work of Gross-Koblitz.
\end{abstract}

Let $F_0=\mathbf Q(\sqrt{-d})$ be an imaginary quadratic field and let $h^-$ be its class number.
Let $\varepsilon_0$ and $h^+$ be the fundamental unit and class number of $K_0=\mathbf Q(\sqrt{3d})$.
The starting point of this work is the following observation.
If 3 splits in $F_0$, and $h^-$, $h^+$, and $\varepsilon_0$ satisfy certain
congruences, then the theory of $3$-adic $L$-functions shows that
the Iwasawa $\lambda$-invariant for the cyclotomic $\mathbf Z_3$-extension
$F_{\infty}/F_0$ satisfies $\lambda\ge 2$. This means that,
as we go up the tower of fields in the $\mathbf Z_3$-extension,
eventually the 3-Sylow subgroup of the ideal class group
has rank at least 2. Therefore, there are at least two independent
unramified abelian extensions of degree 3. We show in Section 7 that,
after we adjoin a cube root of unity,
$\varepsilon_0$ gives a Kummer generator for one of these unramified
extensions. The question is then, where is the other Kummer generator?
Since a condition on $\varepsilon_0$ is used to show this generator exists,
the goal is to relate this generator to $\varepsilon_0$.
This we do in Sections 6 and 8. 

In fact, inspired by ideas from genus theory, in Section 6
we find special
generators for the part of the units where the potential Kummer generators 
live. The congruence condition on $\varepsilon_0$ can then
be interpreted in terms of these generators.
This gives an algebraic interpretation
of the condition for $\lambda\ge 2$.

In Section 9, we use a result of Gross-Koblitz to obtain similar results.
There is a difference in this approach: In Section 9, we use the
value of the $3$-adic $L$-function only at $s=0$, and the proofs 
in that section involve (with one minor exception) only the minus
parts of the class groups. In the other sections, we use the values
of the $3$-adic $L$-functions at both $s=0$ and $s=1$. These correspond
to the minus components and the plus components of the class groups,
respectively, and the interplay between these two components
naturally leads to the study of Kummer pairings and Kummer generators. 

Another motivation for this work is the following.
We know that $\lambda$ is at least as large as the 3-rank of the class
group of $F_0$.
In tables of $\lambda$ (for example, \cite{gold}), sometimes
$\lambda$ is larger than this rank, so there is a jump
in the 3-rank of the class group as we proceed up the $\mathbf Z_3$-extension.
For example, consider the special case where
$3\nmid h^+$. Scholz's theorem says that the 3-class group of $F_0$ is
trivial or cyclic. In the case that 3 splits in $F_0$, we always
have $\lambda\ge 1$, and we show that in this case
$\varepsilon_0$ is always a Kummer
generator for an unramified extension of $F_1(\zeta_3)$. 
When the 3-class group of $F_0$
is trivial and $\lambda=1$, this explains the jump in the rank of
the class group. When the 3-class group is non-trivial,
then our condition for when $\lambda\ge 2$
gives a condition for when the rank jumps in terms of
a congruence on $\varepsilon_0$. 

\section{ $3$-adic $L$-functions}

Let $\chi$ be the quadratic character attached to $K_0$ and let $L_3(s, \chi)$
be the 3-adic $L$-function for $\chi$. There is a power series $f(T)=f(T,\chi)=a_0+a_1T+\cdots\in \mathbf Z_3[[T]]$
such that 
$$
L_3(s,\chi)=f((1+3)^s-1).
$$
Therefore,
\begin{gather*}
a_0=f(0)=L_3(0,\chi)=\left(1-\chi\omega^{-1}(3)\right)h^-\\
\left(1-\frac{\chi(3)}{3}\right)\frac{2h^+\log_3 \varepsilon_0}{\sqrt{D}}=L_3(1,\chi)=f(3)=a_0+3a_1+\cdots,
\end{gather*}
where $\omega$ is the character for $\mathbf Q(\sqrt{-3})$ and $D$ is the discriminant of $K_0$.
Also, $\lambda$ is the smallest index $i$ such that $a_i\not\equiv 0\pmod 3$. In particular, $\lambda\ge 2$ if and only if
$a_0\equiv a_1\equiv 0\pmod 3$. 

Assume from now on that 3 ramifies in $K_0$. Then $3|D$ and $\chi(3)=0$.
Also, since $3|D$, we have $\text{Norm}(\varepsilon_0)=+1$. The above equations yield
$$
\left(1-\chi\omega^{-1}(3)\right)h^- + 3a_1\equiv \frac{2h^+\log_3 \varepsilon_0}{\sqrt{D}} \pmod 9.
$$

\begin{proposition}\label{padicLprop}
(a) Suppose that 3 splits in $F_0$ and $3\nmid h^+$. Then  
$$\lambda\ge 2\iff \log_3 \varepsilon_0 \equiv 0\pmod 9.$$
(b) Suppose that 3 splits in $F_0$ and $3\mid h^+$. Then $\lambda\ge 2$.\newline
(c) Suppose that 3 is inert in $F_0$ and that $3|h^-$. Then  
$$\lambda\ge 2\iff h^-\equiv \frac{h^+\log_3 \varepsilon_0 }{\sqrt{D}} \pmod 9.$$
\end{proposition}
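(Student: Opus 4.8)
The plan is to read the two displayed $L$-value formulas as congruences on $a_0$ and $a_1$. Since $f(T)=a_0+a_1T+\cdots\in\mathbf Z_3[[T]]$ and $\lambda$ is the least $i$ with $a_i\not\equiv 0\pmod 3$, we have $\lambda\ge 2$ exactly when $a_0\equiv a_1\equiv 0\pmod 3$, so the proposition reduces to pinning down $a_0$ mod $3$ and $a_1$ mod $3$. For $a_0$: the character $\chi\omega^{-1}$ is the quadratic character of $F_0$, because $\mathbf Q(\sqrt{3d})$ and $\mathbf Q(\sqrt{-3})$ together contain $\mathbf Q(\sqrt{-9d})=\mathbf Q(\sqrt{-d})=F_0$; since $3\nmid d$ it is unramified at $3$, with $\chi\omega^{-1}(3)=+1$ if $3$ splits in $F_0$ and $-1$ if $3$ is inert. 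Hence $a_0=(1-\chi\omega^{-1}(3))h^-$ equals $0$ in cases (a), (b) and $2h^-$ in case (c); so the clause $a_0\equiv 0\pmod 3$ is automatic in (a) and (b), and in (c) it is exactly the hypothesis $3\mid h^-$. In each case we are left to decide when $a_1\equiv 0\pmod 3$, equivalently (since $a_0\equiv 0\pmod 3$) when $\lambda\ge 2$.

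For this I use $a_0+3a_1\equiv\frac{2h^+\log_3\varepsilon_0}{\sqrt D}\pmod 9$, the displayed congruence coming from $f(3)=L_3(1,\chi)$, $\chi(3)=0$, and $f(3)=a_0+3a_1+9a_2+\cdots$. First note $\frac{\log_3\varepsilon_0}{\sqrt D}\in\mathbf Z_3$: as $3\mid D$ we have $\text{Norm}(\varepsilon_0)=+1$, so $\varepsilon_0^{\sigma}=\varepsilon_0^{-1}$ for the nontrivial automorphism $\sigma$ of $K_0$, forcing $\log_3\varepsilon_0$ into the $(-1)$-eigenspace $\mathbf Q_3\sqrt D$ of $\sigma$; being a $3$-adic log of a unit it lies in the prime $\mathfrak p$ above $3$, and $v_3(D)=1$ gives $v_{\mathfrak p}(\sqrt D)=1$, so the quotient is a $3$-adic integer. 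Part (a) is then immediate: $a_0=0$, $3\nmid h^+$, so $\lambda\ge 2\iff 3a_1\equiv 0\pmod 9\iff\frac{2h^+\log_3\varepsilon_0}{\sqrt D}\equiv 0\pmod 9\iff\log_3\varepsilon_0\equiv 0\pmod 9$. Part (c): with $a_0=2h^-$ the congruence becomes $3a_1\equiv\frac{2h^+\log_3\varepsilon_0}{\sqrt D}-2h^-\pmod 9$, and cancelling the unit $2$ gives $\lambda\ge 2\iff h^-\equiv\frac{h^+\log_3\varepsilon_0}{\sqrt D}\pmod 9$.

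Part (b) needs one extra ingredient. Here $a_0=0$ and $3\mid h^+$, so $3a_1\equiv\frac{2h^+\log_3\varepsilon_0}{\sqrt D}\pmod 9$ forces $3\mid a_1$ as soon as $v_3(h^+)+v_3\!\left(\frac{\log_3\varepsilon_0}{\sqrt D}\right)\ge 2$; since $v_3(h^+)\ge 1$, it is enough to show $v_3\!\left(\frac{\log_3\varepsilon_0}{\sqrt D}\right)\ge 1$, i.e. $\log_3\varepsilon_0\equiv 0\pmod{3\sqrt D}$. This is where the hypothesis that $3$ splits in $F_0$ is used: $3$ splits in $\mathbf Q(\sqrt{-d})$ exactly when $d\equiv 2\pmod 3$, which is exactly the condition for the completion $K_{0,\mathfrak p}=\mathbf Q_3(\sqrt{3d})$ to equal $\mathbf Q_3(\zeta_3)$. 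In that local field $\zeta_3\equiv 1\pmod{\mathfrak p}$ but $\zeta_3\not\equiv 1\pmod{\mathfrak p^2}$, so the principal units split as $U^1=\mu_3\times U^2$; thus $\log_3$ annihilates $\mu_3$ and $\log_3(U^1)=\log_3(U^2)=\mathfrak p^2$. Replacing $\varepsilon_0$ by $\pm\varepsilon_0$ so that $\varepsilon_0\in U^1$ and writing $\varepsilon_0=\zeta\eta$ with $\zeta\in\mu_3$, $\eta\in U^2$, we get $\log_3\varepsilon_0=\log_3\eta$; and $\text{Norm}(\eta)=1$ forces $v_{\mathfrak p}(\eta-1)$ to be odd, hence $\ge 3$, so $v_{\mathfrak p}(\log_3\varepsilon_0)\ge 3$ and $v_3\!\left(\frac{\log_3\varepsilon_0}{\sqrt D}\right)\ge 1$. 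Therefore $\lambda\ge 2$.

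I expect the main obstacle to be exactly this local analysis in (b) — identifying $K_{0,\mathfrak p}$, checking $U^1=\mu_3\times U^2$, and extracting the parity of $v_{\mathfrak p}(\log_3\varepsilon_0)$ from $\text{Norm}(\varepsilon_0)=1$. The same parity fact is also what makes the statement of (a) correct, since it shows that $9\mid\log_3\varepsilon_0$ is equivalent to $v_{\mathfrak p}(\log_3\varepsilon_0)\ge 5$. Granting that, the rest is just manipulation of the two $L$-value identities modulo $9$.
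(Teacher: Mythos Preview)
Your argument is correct and follows the same overall strategy as the paper: identify $\chi\omega^{-1}$ with the character of $F_0$ to evaluate $a_0$, then read off $a_1\pmod 3$ from the congruence $a_0+3a_1\equiv \frac{2h^+\log_3\varepsilon_0}{\sqrt D}\pmod 9$. Your treatment of (a) and (c) matches the paper's essentially line for line; the one point you defer to the end---that $\log_3\varepsilon_0\in\mathbf Q_3\sqrt D$ forces $v_{\mathfrak p}(\log_3\varepsilon_0)$ odd, so $9\mid\log_3\varepsilon_0\iff 9\sqrt D\mid\log_3\varepsilon_0$---is exactly what the paper uses too, though the paper cites the $3$-adic class number formula for $(\log_3\varepsilon_0)/\sqrt D\in\mathbf Q_3$ while you deduce it directly from $\mathrm{Norm}(\varepsilon_0)=+1$.

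The only genuine difference is in (b). The paper writes $\varepsilon_0^2=1+3x+y\sqrt{3d}$ and computes the first terms of the log series explicitly, using $d\equiv -1\pmod 3$ to see a cancellation that gives $\log_3\varepsilon_0\equiv 0\pmod 3$. You instead identify $K_{0,\mathfrak p}$ with $\mathbf Q_3(\zeta_3)$, split $U^1=\mu_3\times U^2$, and use $\mathrm{Norm}(\eta)=1$ (inherited from $\mathrm{Norm}(\varepsilon_0)=1$ since $\mathrm{Norm}(\zeta_3)=1$) to force $v_{\mathfrak p}(\eta-1)$ odd, hence $\ge 3$. Your route is more structural and avoids any series manipulation; the paper's is a two-line computation. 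Both yield $v_{\mathfrak p}(\log_3\varepsilon_0)\ge 3$, which combined with $3\mid h^+$ gives $\frac{2h^+\log_3\varepsilon_0}{\sqrt D}\equiv 0\pmod 9$ and hence $a_1\equiv 0\pmod 3$.
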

\begin{proof} If 3 splits, then
$\chi\omega^{-1}(3)=1$, so $a_0=0$. If 3 is inert, then $\chi\omega^{-1}(3)=-1$,
so $a_0=2h^-$. Our assumptions imply that $a_0\equiv 0\pmod 3$.
The condition in part (c) is clearly equivalent to $a_1\equiv 0\pmod 3$.

It follows from the $3$-adic class number formula that
$(\log_3\varepsilon_0)/\sqrt{D}\in\mathbf Q_3$. Therefore, $\log_3\varepsilon_0\equiv 0\pmod 9$
is equivalent to $\log_3\varepsilon_0\equiv 0\pmod {9\sqrt{D}}$. Hence, the condition
in (a) is equivalent to $a_1\equiv 0\pmod 3$.

If $d\equiv 2\pmod 3$, write $\varepsilon_0^2=1+3x+y\sqrt{3d}$ with $2x, 2y\in \mathbf Z$. 
Then
\begin{align*}
\log_3 \varepsilon_0&\equiv \frac12 \log(1+3x+y\sqrt{3d})\\
&\equiv 
y\sqrt{3d}+\frac13 y^3 3d\sqrt{3d}
\equiv 0\pmod 3
\end{align*}
since $d\equiv -1\pmod 3$. Therefore, the condition in (b) implies that
$a_1\equiv 0\pmod 3$. 
\end{proof}

{\bf Remark.} For a slightly different version of the
proposition, see \cite[Thm. 1]{kraft}. 

\section{Preliminaries}

Let $L_0=\mathbf Q(\sqrt{-d},\sqrt{3d})$.
Let $\mathbf B_i$
be the $i$th level of the $\mathbf Z_3$-extension of $\mathbf Q$.
Define $F_i=F_0\mathbf B_i$, $K_i=K_0\mathbf B_i$, and $L_i=L_0\mathbf B_i$. Let
$$
\langle g\rangle = \text{Gal}(K_i/\mathbf B_i)=\text{Gal}(L_i/F_i), 
\quad \langle \sigma\rangle = \text{Gal}(L_i/K_i) =\text{Gal}(F_i/\mathbf B_i),
$$
where we identify the groups for arbitrary $i$ with those for $i=0$.
Let 
$$
\langle \tau \rangle = \text{Gal}(\mathbf B_1/\mathbf B_0)=\text{Gal}(L_1/L_0)=
\text{Gal}(F_1/F_0)
=\text{Gal}(K_1/K_0).
$$
\begin{align*}
&\xymatrix{
&L_1 \ar@{-}[d]^{\sigma g} \ar@{-}[dl]_g \ar@{-}[dr]^{\sigma}\\
F_1\ar@{-}[dr]_{\sigma} \ar@{-}[dd]_{\tau}& {\mathbf Q}(\zeta_9)\ar@{-}[d] & 
K_1\ar@{-}[dl]^g\ar@{-}[dd]^{\tau}\\
& {\mathbf B}_1\ar@{-}[dd]^{\tau}& \\
F_0\ar@{-}[dr]_{\sigma} & & K_0\ar@{-}[dl]^{g}\\
& {\mathbf Q}
}
\end{align*}

Let $A_n$ be the 3-Sylow subgroup of the ideal class group of $F_n$ and let
$\widetilde{A}_n$ be the 3-Sylow subgroup of the ideal class group of $L_n$.

\begin{lemma}\label{scholzlemma} Let $\widetilde{A}_n^{-}$ denote the subgroup
of $\widetilde{A}_n$ on which $\sigma$ acts by inversion.
Then $A_n\simeq \widetilde{A}_n^-$ for all $n\ge 0$. If $3\nmid h^+$ then $A_n\simeq 
\widetilde{A}_n$ for all $n\ge 0$.
\end{lemma}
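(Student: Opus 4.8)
The plan is to compare $A_n$ with $\widetilde A_n$ through the eigenspace decomposition of $\widetilde A_n$ under $\Delta:=\operatorname{Gal}(L_n/\mathbf B_n)=\langle\sigma\rangle\times\langle g\rangle\cong(\mathbf Z/2)^2$. Since $|\Delta|=4$ is prime to $3$, every $\mathbf Z_3$-valued character of $\Delta$ takes values in $\{\pm1\}$, so I would write $\widetilde A_n=\widetilde A_n^{++}\oplus\widetilde A_n^{+-}\oplus\widetilde A_n^{-+}\oplus\widetilde A_n^{--}$, where the two superscripts record, in order, the signs by which $\sigma$ and $g$ act. The basic tool is the standard fact that, for an extension of degree prime to $p$, the extension-of-ideals map identifies the $p$-part of the class group of the base field with the subgroup of invariants upstairs, because composing it with the norm is multiplication by the degree, a unit on $p$-groups. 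Applied to the four subgroups $H$ of order $2$ in $\Delta$, this gives $\widetilde A_n^{H}\cong A(L_n^{H})_3$ for each such $H$.

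Next I would read off the four pieces. Taking $H=\langle\sigma g\rangle$, whose fixed field is $\mathbf Q(\sqrt{-3})\mathbf B_n=\mathbf Q(\zeta_{3^{n+1}})$, gives $\widetilde A_n^{++}\oplus\widetilde A_n^{--}\cong A(\mathbf Q(\zeta_{3^{n+1}}))_3$, which vanishes because $3$ is a regular prime, hence $3\nmid h(\mathbf Q(\zeta_{3^{n+1}}))$ for all $n$; thus $\widetilde A_n^{++}=\widetilde A_n^{--}=0$. Taking $H=\langle g\rangle$, with fixed field $F_n$, gives $\widetilde A_n^{-+}\cong A(F_n)_3=A_n$ (the $g$-invariant part is the $(-,+)$-piece since $\widetilde A_n^{++}=0$). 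Taking $H=\langle\sigma\rangle$, with fixed field $K_n$, gives $\widetilde A_n^{+-}\cong A(K_n)_3$; here $\sigma$ is complex conjugation, $L_n$ being CM with maximal totally real subfield $K_n$ (I would confirm against the field diagram which of $\sigma,g$ is complex conjugation). Since $\widetilde A_n^{++}=\widetilde A_n^{--}=0$, the minus part is $\widetilde A_n^-=\widetilde A_n^{-+}\cong A_n$, which is the first assertion, and moreover $\widetilde A_n\cong A_n\oplus A(K_n)_3$.

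It then remains, for the second assertion, to show that $3\nmid h^+$ forces $A(K_n)_3=0$ for every $n$, and this is the step I expect to be the main obstacle. The plan here is first to observe that $3$ is totally ramified in $K_\infty/K_0$ and that there is a unique prime of $K_0$ above $3$: since $3\nmid d$, the prime $3$ ramifies in $K_0/\mathbf Q$ (its exponent in the discriminant of $K_0$ is $1$), so the inertia subgroup at $3$ in $\operatorname{Gal}(K_\infty/\mathbf Q)\cong\mathbf Z/2\times\mathbf Z_3$ surjects onto both direct factors and is therefore the whole group; a quadratic field in which $3$ ramifies has a single prime above $3$, and total ramification keeps it single up the tower. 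Then I would invoke the structure theory of a $\mathbf Z_3$-extension in which a single prime ramifies and does so totally: the norm maps in the tower are surjective and $X/(\gamma-1)X\cong A(K_0)_3$ for the Iwasawa module $X=\varprojlim A(K_n)_3$, so $A(K_0)_3=0$ forces $X=0$ by Nakayama, hence $A(K_n)_3=0$ for all $n$ and $\widetilde A_n\cong A_n$. The routine parts are the eigenspace bookkeeping and the prime-to-$p$ comparison; the care lies entirely in verifying the ``unique, totally ramified prime'' hypothesis so that the clean Iwasawa-module statement can be applied all the way down to level $0$.
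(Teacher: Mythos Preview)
Your proof is correct and follows essentially the same approach as the paper: both decompose $\widetilde{A}_n$ into eigenspaces for the Klein four group $\langle\sigma,g\rangle$ and use that the $3$-parts of the class groups of $\mathbf{B}_n$, $\mathbf{Q}(\zeta_{3^{n+1}})$, and (under the hypothesis) $K_n$ vanish. The paper carries this out by an explicit decomposition of $J^4$ into four pieces, while you invoke the general prime-to-$p$ comparison lemma; you also supply the Iwasawa--Nakayama argument for $3\nmid h(K_n)$, which the paper simply asserts.
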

\begin{proof} 
Let $I$ be an ideal of $F_n$ that becomes principal in $L_n$. Then the norm
from $L_n$ to $F_n$ of this ideal, namely $I^2$, is principal. If $I$ has order
divisible by 3, this is impossible.
Therefore, the natural map from $A_n$ to $\widetilde{A}_n$ is injective.
Now let $J$ represent an ideal class in $\widetilde{A}_n$.
Then 
$$
J^4=J^{(1+g+\sigma+g\sigma)} J^{(1+g)(1-\sigma)} J^{(1+\sigma)(1-g)} 
J^{(1+g\sigma)(1-\sigma)}.
$$
All four ideals on the right-hand side are of 3-power order in the ideal class group.
The first ideal is the lift of an ideal of $\mathbf B_n$, hence principal. The ideal
$J^{1+g\sigma}$ come from $\mathbf Q(\zeta_{3^{n+1}})$,
which has class number prime to 3. Hence this ideal is principal.
The ideal $J^{1+g}$ comes from
$F_n$ and $\sigma$ acts by inversion on $J^{(1+g)(1-\sigma)}$. 

If $3\nmid h^+$ then the class number of $K_n$ is prime to 3, so
$J^{(1+\sigma)(1-g)}$ is principal. 
Therefore, the ideal class of $J^4$ comes from $F_n$. This implies
that the ideal class of $J$ comes from $F_n$.

In general, $\sigma$ fixes $J^{(1+\sigma)(1-g)}$.
If $J$ represents a class in $\widetilde{A}_n^-$, then 
$\sigma$ inverts the class of $J^4/J^{(1+g)(1-\sigma)}$, which is the class
of $J^{(1+\sigma)(1-g)}$. It follows easily
that $J^{(1+\sigma)(1-g)}$ is principal, so the class of $J^4$ comes from
$F_n$, as desired.
\end{proof}

\begin{lemma}\label{ranklemma} The 3-rank of $A_n$ is less than or 
equal to $\lambda$ for all $n$.
\end{lemma}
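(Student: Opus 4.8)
The plan is to read the bound off the structure of the Iwasawa module $X=\varprojlim_n A_n$, the inverse limit being taken with respect to the norm maps; this is a finitely generated torsion module over $\Lambda=\mathbf{Z}_3[[T]]$, and its Iwasawa $\lambda$-invariant is, by definition, the $\lambda$ of the statement. Three standard inputs from the theory of $\mathbf{Z}_3$-extensions are needed, all available because in the cyclotomic $\mathbf{Z}_3$-extension $F_\infty/F_0$ every prime above $3$ is totally ramified already at the bottom of the tower (one such prime if $3$ is inert in $F_0$, two if $3$ splits): (i) by Ferrero--Washington, $\mu(X)=0$ (here one uses that $F_0$ is abelian over $\mathbf{Q}$); (ii) $X$ has no nonzero finite $\Lambda$-submodule; and (iii) the norm maps $A_{n+1}\to A_n$ are surjective for every $n\ge 0$.

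First I would pin down $X$ as a $\mathbf{Z}_3$-module. Since $\mu(X)=0$, the structure theorem provides a pseudo-isomorphism $X\to\bigoplus_i \Lambda/(f_i)$ in which each $f_i$ is a power of an irreducible distinguished polynomial (no factor of $3$ occurs) and $\sum_i\deg f_i=\lambda$; by (ii) this map is injective. By the Weierstrass preparation theorem each $\Lambda/(f_i)$ is free over $\mathbf{Z}_3$ of rank $\deg f_i$, so $\bigoplus_i\Lambda/(f_i)$ is $\mathbf{Z}_3$-free of rank $\lambda$. A submodule of a free module over the principal ideal domain $\mathbf{Z}_3$ is free of no larger rank, so $X$ is $\mathbf{Z}_3$-free of rank at most $\lambda$; in particular $\dim_{\mathbf{F}_3}X/3X\le\lambda$.

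Finally I would transfer this to $A_n$. By (iii), and since the $A_n$ are finite, the canonical projection $X\to A_n$ is surjective for every $n$, hence so is $X/3X\to A_n/3A_n$. Therefore the $3$-rank of $A_n$, namely $\dim_{\mathbf{F}_3}A_n/3A_n$, is at most $\dim_{\mathbf{F}_3}X/3X\le\lambda$, which is the assertion. The only non-formal ingredients are the structural facts (i)--(iii) about $X$; of these, (ii) --- the absence of a finite submodule, which is exactly what lets one conclude that $X$ is $\mathbf{Z}_3$-torsion-free --- is the point I would expect to require the most care to cite correctly, although it is entirely standard for cyclotomic $\mathbf{Z}_3$-extensions and involves no new idea.
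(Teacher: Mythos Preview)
Your argument is correct and is essentially the approach the paper takes. The paper's proof is simply a compressed version: it observes that total ramification gives surjectivity of the norm maps, so each $A_n$ is a quotient of $X$; it then notes that since $\mathbf{B}_m$ has trivial $3$-class group, $A_m$ coincides with the minus part of the $3$-class group of $F_m$, and invokes \cite[Corollary~13.29]{washington} to conclude directly that $X\simeq\mathbf{Z}_3^{\lambda}$. That corollary packages exactly the ingredients you spell out (Ferrero--Washington for $\mu=0$, absence of a finite submodule, and the structure theorem).

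One small point worth sharpening: your item (ii), the vanishing of finite $\Lambda$-submodules of $X$, is not a general feature of arbitrary cyclotomic $\mathbf{Z}_p$-extensions---when $3$ splits in $F_0$ there are two ramified primes, so the ``single totally ramified prime'' criterion does not apply. What saves you here is precisely what the paper makes explicit: $A_m$ is the minus part (the plus part being trivial because $\mathbf{B}_m$ has class number prime to $3$), and the minus part of the Iwasawa module of a CM field has no nonzero finite submodule. You flag (ii) as the delicate step, which is right, but the correct citation is for $X^-$ rather than for $X$ in general.
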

\begin{proof} Since $F_m/F_n$ is totally ramified for all $m\ge n$, the norm map
on the ideal class groups is surjective. Therefore,
$A_n$ is a quotient of $X=\lim_{\leftarrow} A_m$. Since the 3-class group 
of $\mathbf B_m$ is trivial
for all $m$, the minus part of the 3-class group of $F_m$ is $A_m$. Therefore, by
\cite[Corollary 13.29]{washington},
$X\simeq \mathbb Z_3^{\lambda}$. The result follows easily.\end{proof}

The following lemmas will be useful throughout.

\begin{lemma}\label{quadlemma} Let $L/K$ be a quadratic extension of number fields and let $\sigma$ generate
Gal$(L/K)$. \newline
(a) The map $K^{\times}/(K^{\times})^3 \to \left(L^{\times}/(L^{\times})^3\right)^{\langle \sigma\rangle}$
is an isomorphism (where the superscript $\langle\sigma\rangle$ denotes the
elements fixed by $\sigma$).\newline
(b) Let $E_K, E_L$ be the unit groups of the rings of integers of $K$ and $L$.
The map 
$E_K/(E_K)^3 \to \left(E_L/(E_L)^3\right)^{\langle \sigma\rangle}$
is an isomorphism.
\end{lemma}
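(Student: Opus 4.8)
The plan is to use that the degree $[L:K]=2$ is prime to $3$, so that the norm map inverts the inclusion up to an automorphism. Throughout, write $\iota$ for the map on cube-power quotients induced by the inclusion ($K^{\times}\hookrightarrow L^{\times}$ in (a), $E_K\hookrightarrow E_L$ in (b)), and write $N$ for the map induced by $N_{L/K}$; the latter is defined in the unit case as well, since the norm of a unit is a unit lying in $K$. Because $L/K$ is quadratic with nontrivial automorphism $\sigma$, the composite $N\circ\iota$ is multiplication by $[L:K]=2$, which is an automorphism of $K^{\times}/(K^{\times})^3$ (resp.\ $E_K/(E_K)^3$) since $\gcd(2,3)=1$. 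Hence $\iota$ is injective in both parts.

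It then remains to identify the image of $\iota$ with the $\sigma$-fixed subgroup of the target. Elements of $K$ are fixed by $\sigma$, so the image lands in the fixed part, and one must prove the reverse inclusion. First I would note that every class $\bar x$ in $L^{\times}/(L^{\times})^3$ has $\bar x^3=1$, hence $(\bar x^{\,2})^{2}=\bar x^{\,4}=\bar x$ --- and likewise in $E_L/(E_L)^3$. Now let $\bar\beta$ be a class fixed by $\sigma$. Then $\iota\bigl(N(\bar\beta)\bigr)=\bar\beta\cdot\sigma(\bar\beta)=\bar\beta^{\,2}$, so $\bar\beta^{\,2}$ lies in the image of $\iota$; since that image is a subgroup, $\bar\beta=(\bar\beta^{\,2})^{2}$ lies in it as well. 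This gives surjectivity onto the fixed part, proving (a), and (b) follows by the verbatim same argument with $E_K,E_L$ replacing $K^{\times},L^{\times}$.

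Frankly I do not expect a genuine obstacle here: the content is the standard restriction/corestriction argument for an extension of degree prime to the relevant torsion order. The only things to watch are that $\gcd([L:K],3)=1$ is used twice (once to invert $N\circ\iota$, once --- equivalently --- to recover $\bar\beta$ from $\bar\beta^{\,2}$), and that one should record that $N_{L/K}$ carries $E_L$ into $E_K$ so that part (b) is even sensible. A cohomological reformulation is available --- $H^i(\langle\sigma\rangle,M)=0$ for $i\ge 1$ once $M$ is killed by $2$ and by $3$ --- but the explicit norm computation above is shorter and more transparent.
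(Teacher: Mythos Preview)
Your proof is correct and takes a somewhat different route from the paper. The paper argues surjectivity via the long exact cohomology sequence attached to $1\to (L^{\times})^3\to L^{\times}\to L^{\times}/(L^{\times})^3\to 1$: the obstruction to surjectivity lives in $H^1(\langle\sigma\rangle,(L^{\times})^3)$, which is killed by $2$, while the image coming from a group of exponent $3$ must also be killed by $3$, hence is trivial. For injectivity the paper essentially does what you do (if $x\in K^{\times}$ becomes a cube in $L$, then $x^2=N_{L/K}(x)$ is a cube, hence so is $x$). Your approach replaces the cohomological surjectivity argument by the direct observation that on $\sigma$-fixed classes one has $\iota\circ N=(\,\cdot\,)^2$, so squaring---an automorphism of any exponent-$3$ group---already exhibits every fixed class as a square of something in the image. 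This is the restriction/corestriction argument made explicit and is arguably cleaner here; the paper's cohomological phrasing buys you a template that generalizes more readily to other coefficient modules and higher $H^i$, but for the present lemma your norm computation is shorter and entirely self-contained.
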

\begin{proof}
The cohomology sequence associated with 
$$1\to (L^{\times})^3\to L^{\times}\to 
L^{\times}/(L^{\times})^3\to 1$$
yields
$$ K^{\times} \to \left( L^{\times}/(L^{\times})^3\right)^{\langle \sigma\rangle}
\to H^1(\langle \sigma \rangle,\, (L^{\times})^3).
$$
Since $\sigma$ has order $2$, the cohomology group $H^1(\langle \sigma \rangle,\, 
(L^{\times})^3)$ is killed by 2. 
But the image of $\left( L^{\times}/(L^{\times})^3\right)^{\langle \sigma\rangle}$ in this cohomology group is also a quotient of
the group $\left( L^{\times}/(L^{\times})^3\right)^{\langle \sigma\rangle}$ 
of exponent 3. Therefore, the image is trivial, which yields the surjectivity in part
(a). Now suppose that $x\in K^{\times}$ is a cube in $L^{\times}$. Then $x^2$ is the norm
of a cube, hence is a cube. This implies that $x$ is also a cube, so the map in part (a)
in injective.

 The proof of (b) is similar.
\end{proof}

\begin{lemma}\label{cubelemma} The natural map $K_0^{\times}/(K_0^{\times})^3\to 
K_1^{\times}/(K_1^{\times})^3$ is injective.
\end{lemma}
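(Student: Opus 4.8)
The plan is to use that $K_1/K_0$ is the first layer of the cyclotomic $\mathbf Z_3$-extension, so it is a \emph{cyclic} (hence Galois) extension of degree $3$, while a nontrivial pure cubic extension of the totally real field $K_0=\mathbf Q(\sqrt{3d})$ can never be Galois. Note first that $K_1=K_0\mathbf B_1$ is totally real (being the compositum of the real fields $K_0$ and $\mathbf B_1$), so $\mu_3\cap K_1=\{1\}$; consequently, for $x\in K_0^\times$, saying $x\in(K_0^\times)^3$ is equivalent to saying that any $y\in K_1^\times$ with $y^3=x$ already lies in $K_0$ (if $x=z^3$ with $z\in K_0$, then $y/z\in\mu_3\cap K_1=\{1\}$). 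So suppose $x\in K_0^\times$ becomes a cube in $K_1^\times$, say $x=y^3$ with $y\in K_1^\times$, and I will show $y\in K_0^\times$.

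First I would observe $K_0(y)\subseteq K_1$ and $[K_0(y):K_0]\mid[K_1:K_0]=3$, so $[K_0(y):K_0]$ is $1$ or $3$. Assume for contradiction that it equals $3$; then $K_0(y)=K_1$. Since $y$ is a root of the monic cubic $X^3-x\in K_0[X]$ and $[K_0(y):K_0]=3$, the minimal polynomial of $y$ over $K_0$ must be $X^3-x$ itself; in particular $X^3-x$ is irreducible over $K_0$ and $K_1=K_0(y)$ is generated by a single one of its roots.

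Now comes the key step: $K_1/K_0$ is Galois, so $K_1$, containing the root $y$ of the irreducible polynomial $X^3-x$, must contain all of its roots, hence the ratio of two distinct roots, which is a primitive cube root of unity $\zeta_3$. But $K_1\subset\mathbf R$, so $\zeta_3\notin K_1$ --- a contradiction. Therefore $[K_0(y):K_0]=1$, i.e.\ $y\in K_0^\times$ and $x=y^3\in(K_0^\times)^3$, which is the desired injectivity. I do not expect a real obstacle here; the argument is short, and the only place the hypotheses genuinely enter is the (twofold) use that $K_1$ is real. It is worth noting that this is essential rather than cosmetic: unlike in Lemma~\ref{quadlemma}, where $\mathrm{Gal}(L/K)$ has order $2$ coprime to $3$ so the cohomological norm argument applies, for the cubic extension $K_1/K_0$ one has $|{\langle\tau\rangle}|=3$ and injectivity of $K_0^\times/(K_0^\times)^3\to K_1^\times/(K_1^\times)^3$ can fail for a general cyclic cubic extension (e.g.\ $K_0(a^{1/3})/K_0$ when $\zeta_3\in K_0$); it holds here precisely because the cyclic layer $K_1$ cannot coincide with any pure cubic extension $K_0(x^{1/3})$ of the real base $K_0$.
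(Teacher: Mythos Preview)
Your proof is correct and follows essentially the same line as the paper's: assume $b\in K_0^\times$ is not a cube in $K_0$ but has a cube root in $K_1$, observe that $X^3-b$ is then irreducible over $K_0$, and use that $K_1/K_0$ is Galois to force $\zeta_3\in K_1$, contradicting the reality of $K_1$. The paper's version is just more terse, and your closing remark contrasting the situation with Lemma~\ref{quadlemma} is a nice addition but not part of the paper's argument.
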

\begin{proof}
Let $b\in K_0^{\times}$ and suppose that $X^3-b$ has a root in $K_1$.
If $b\not\in (K_0^{\times})^3$, then the polynomial is irreducible in $K_0[X]$.
Since $K_1/K_0$ is Galois, the polynomial has all three roots in $K_1$,
hence $K_1$ contains the cube roots of unity. Contradiction.
\end{proof}

\begin{lemma}\label{ep0lemma} $\varepsilon_0$ is not a cube in $L_1$.
\end{lemma}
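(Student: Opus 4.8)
The plan is to argue by contradiction, supposing $\varepsilon_0 = \eta^3$ for some $\eta \in L_1^{\times}$, and in fact some $\eta \in E_{L_1}$ since $\varepsilon_0$ is a unit and $L_1/\mathbf Q$ is a number field. The first step is to descend from $L_1$ to $K_0$. Since $\varepsilon_0 \in K_0 \subseteq K_1$, Lemma~\ref{cubelemma} reduces the question of whether $\varepsilon_0$ is a cube in $K_1$ to whether it is a cube in $K_0$; and since $\varepsilon_0$ is the fundamental unit of the real quadratic field $K_0$, it is visibly not a cube there (its cube root would be a smaller unit). So it suffices to show that $\varepsilon_0$ being a cube in $L_1$ forces it to be a cube in $K_1$. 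This is exactly the content of Lemma~\ref{quadlemma}(b) applied to the quadratic extension $L_1/K_1$ with $\mathrm{Gal}(L_1/K_1) = \langle \sigma\rangle$: the image of $\varepsilon_0$ in $E_{L_1}/(E_{L_1})^3$ is fixed by $\sigma$ (because $\varepsilon_0 \in K_1$ is fixed by $\sigma$), and the map $E_{K_1}/(E_{K_1})^3 \to (E_{L_1}/(E_{L_1})^3)^{\langle\sigma\rangle}$ is an isomorphism, so $\varepsilon_0 \in (E_{L_1})^3$ implies $\varepsilon_0 \in (E_{K_1})^3$.

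Chaining these together: $\varepsilon_0 \in (E_{L_1})^3 \Rightarrow \varepsilon_0 \in (E_{K_1})^3 \Rightarrow \varepsilon_0 \in (K_1^\times)^3 \Rightarrow \varepsilon_0 \in (K_0^\times)^3$, where the last step uses Lemma~\ref{cubelemma}. But then $\varepsilon_0 = \gamma^3$ with $\gamma \in K_0^\times$; taking absolute values under a real embedding and using that $\gamma$ must then be a unit (its cube is a unit and $K_0$ is integrally closed) contradicts the minimality of the fundamental unit $\varepsilon_0 > 1$. Hence no such $\eta$ exists.

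The only point requiring a little care — and the step I would flag as the main obstacle — is the reduction from $L_1^\times$ to $E_{L_1}$, i.e.\ checking that a unit which is a cube in $L_1^\times$ is in fact the cube of a \emph{unit}. This is immediate: if $\varepsilon_0 = \eta^3$ with $\eta \in L_1^\times$, then the principal ideal $(\eta)^3 = (\varepsilon_0) = \mathcal O_{L_1}$, so $(\eta) = \mathcal O_{L_1}$ and $\eta \in E_{L_1}$; the analogous remark applies at the $K_1$ and $K_0$ stages. With that observation in place, everything else is a direct application of the three preceding lemmas, and no genuinely new computation is needed.
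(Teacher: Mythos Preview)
Your proof is correct and follows essentially the same route as the paper, which simply states that the lemma ``follows from the previous two lemmas'' --- you have just spelled out the chain $L_1 \to K_1 \to K_0$ via Lemma~\ref{quadlemma} and Lemma~\ref{cubelemma} explicitly. One small simplification: you can use Lemma~\ref{quadlemma}(a) directly on $L_1/K_1$ rather than (b), which bypasses the need to check that the cube root is a unit (though, as you note, that check is immediate anyway).
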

\begin{proof}
This follows from the previous two lemmas.\end{proof}

\section{Structure of units}

Our goal is to understand the Hilbert 3-class field of $F_1$
and to lay the groundwork for the proof of Theorem~\ref{mainthm} in Section 6.
The main part of the proof relies on an analysis of the Kummer generators
of unramified extensions of $L_1$. In our situation, these Kummer generators
arise from units in $K_1$.

Let $E$ denote the units of $K_1$. 
By the Dirichlet unit theorem, $E/E^3\simeq (\mathbf Z/3\mathbf Z)^5$.
We have
$$
E/E^3 = (E/E^3)^{1+g}\oplus (E/E^3)^{1-g},
$$
where the groups $(E/E^3)^{1\pm g}$ are the $\pm$ eigenspaces for the action of $g$.
Lemma 3, or the fact that $1+g$ is the norm from $K_1$ to $\mathbf B_1$, 
shows that $(E/E^3)^{1+g}$ is represented by the units of $\mathbf B_1$
and has dimension 2 over $\mathbf F_3$. Therefore $(E/E^3)^{1-g}$ has dimension 3.

Let 
$$M=(E/E^3)^{1-g}.$$ 
and let $\widetilde{H}_3=L_1(M^{1/3})$. Then 
$\widetilde{\cal A}= \text{Gal}(\widetilde{H}_3/L_1)
\simeq (\mathbf Z/3\mathbf Z)^3$.
The Kummer pairing
$$
M\times \widetilde{\cal A} \longrightarrow \mu_3
$$
is Galois equivariant for the actions of $\text{Gal}(L_1/\mathbf Q)$
on $M$, $\widetilde{\cal A}$, and $\mu_3$. 

Since $g$ acts as $-1$ on $\mu_3$ and acts as $-1$ on $M$, it acts
as $+1$ on $\widetilde{\cal A}$. 
In particular, any lift of $g$ to an element of
$\text{Gal}(\widetilde{H}_3/F_1)$ commutes
with $\text{Gal}(\widetilde{H}_3/L_1)$. Since 
$\text{Gal}(\widetilde{H}_3/F_1)$ is therefore abelian 
of order $2\times 27$, we can
lift $g$ to the unique element of order 2 in $\text{Gal}(\widetilde{H}_3/F_1)$. 
We continue to call this element $g$.
The fixed field of $g$ restricted to $\widetilde{H}_3$ is a field $H_3$ 
that is Galois over $F_1$.
Of course, if $\widetilde{H}_3/L_1$ is unramified, 
then this is simply a consequence of the fact
that the Hilbert 3-class field of $F_1$ lifts to the minus part of the Hilbert 3-class field of $L_1$. 

Let ${\cal A}=\text{Gal}(H_3/F_1)$. We have
$$
{\cal A}\simeq \widetilde{\cal A},
$$
and we can rewrite the Kummer pairing as
$$
M\times {\cal A}\longrightarrow \mu_3.
$$
In many ways, it is more natural to state the results over $F_1$. But,
for the proofs, it is often necessary
to work in the larger field $L_1$ in order to use Kummer generators.

Since $\tau^3=1$, we find that $(1-\tau)^3$ kills $M$. There are two
filtrations: 
$$
M=M^0\supseteq M^1=(1-\tau)M\supseteq M^2=(1-\tau)^2 M\supseteq M^3=0
$$
and
$$
M=M_3\supseteq M_2=M[(1-\tau)^2]\supseteq M_1=M[1-\tau]\supseteq M_0=0,
$$
where $M[(1-\tau)^j]$ denotes the kernel of $(1-\tau)^j$ (cf. \cite{hubbard}).
Moreover,
$$
(1-\tau)^{3-j}M\subseteq M[(1-\tau)^j]
$$
for $0\le j\le 3$. 

There are also two
filtrations on ${\cal A}$: 
$$
{\cal A}={\cal A}^0\supseteq {\cal A}^1=(1-\tau){\cal A}\supseteq {\cal A}^2=(1-\tau)^2 {\cal A}\supseteq {\cal A}^3=0
$$
and
$$
{\cal A}={\cal A}_3\supseteq {\cal A}_2={\cal A}[(1-\tau)^2]\supseteq {\cal A}_1={\cal A}[1-\tau]\supseteq {\cal A}_0=0,
$$
where ${\cal A}[(1-\tau)^j]$ denotes the kernel of $(1-\tau)^j$.
Moreover,
$$
(1-\tau)^{3-j}{\cal A}\subseteq {\cal A}[(1-\tau)^j]
$$
for $0\le j\le 3$. 

Under the Kummer pairing $M\times {\cal A} \to \mu_3$,
we have 
\begin{align*}
a\in (M^i)^{\perp}&\iff \langle (1-\tau)^i m, a\rangle=1 \text{ for all }m\in M\\
&\iff\langle m, (1-\tau^{-1})^ia\rangle=1\text{ for all }m\in M\\
&\iff(1-\tau)^i a=0,
\end{align*}
so $(M^i)^{\perp}= {\cal A}_i$. This and similar facts yield nondegenerate
pairings
\begin{gather*}
M^{i}\times {\cal A}/{\cal A}_i\to \mu_3,
\qquad M/M^{i}\times {\cal A}_{i}\to \mu_3,\\ 
M_{i}\times {\cal A}/{\cal A}^i\to \mu_3,
\qquad M/M_{i}\times {\cal A}^{i}\to \mu_3, 
\end{gather*}
so each filtration for $M$ pairs with a filtration for ${\cal A}$.

Lemma 5 (or 4) says that $\varepsilon_0$ gives a nontrivial element of $M[1-\tau]$. 

\begin{lemma}\label{dimlemma} The following are equivalent (where $\dim$ is the dimension
as a vector space over $\mathbf F_3$):\newline
(a) $\dim M_1=1$\newline
(b) $\dim M_i=i$ for all $i$\newline
(c) $M_i=M^{3-i}$ for all $i$ \newline
(d) $\dim M^i=3-i$ for all $i$\newline
(e) $\dim {\cal A}_i=i$ for all $i$\newline
(f) ${\cal A}_i={\cal A}^{3-i}$ for all $i$\newline
(g) $\dim {\cal A}^i=3-i$ for all $i$.\newline
(h) $\dim {\cal A}_1=1$.
\end{lemma}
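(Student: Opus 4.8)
The plan is to exploit the fact that, since $\tau^3=1$ and we are working over $\mathbf F_3$, the operator $\pi=1-\tau$ is nilpotent on $M$ with $\pi^3=(1-\tau)^3=0$. Viewing $M$ as a torsion $\mathbf F_3[x]$-module killed by $x^3$ (with $x$ acting as $\pi$), the structure theorem for finitely generated modules over the PID $\mathbf F_3[x]$ gives $M\simeq\bigoplus_j\mathbf F_3[x]/(x^{e_j})$ with $1\le e_j\le 3$; since $\dim_{\mathbf F_3}M=3$, the multiset $\{e_j\}$ is $(3)$, $(2,1)$, or $(1,1,1)$. First I would record, in each case, the dimensions of the two filtrations $M^i=\pi^iM$ and $M_i=M[\pi^i]$; much of this is forced by the intrinsic descriptions of $M^i$ and $M_i$ as the image and kernel of $\pi^i\colon M\to M$, together with the rank--nullity identity $\dim M^i+\dim M_i=\dim M=3$. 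In the cyclic case $M\simeq\mathbf F_3[x]/(x^3)$ one gets $\dim M^i=3-i$, $\dim M_i=i$, and $M_i=M^{3-i}$ for all $i$ (so (a)--(d) all hold), while in the other two cases $\dim M_1=2$ or $3$. It then remains to see that each of (b), (c), (d) fails once $\dim M_1\ge 2$: (b) and (d) fail already at $i=1$ (the latter by rank--nullity), and (c) fails because $M^2\subseteq M_1$ always (this is the containment $(1-\tau)^{3-j}M\subseteq M[(1-\tau)^j]$ at $j=2$), so $M_1=M^2$ would force $\dim M_1\le\dim M^2=3-\dim M_2\le 3-\dim M_1$ and hence $\dim M_1=1$. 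This gives (a) $\Leftrightarrow$ (b) $\Leftrightarrow$ (c) $\Leftrightarrow$ (d), each equivalent to $M$ being cyclic over $\mathbf F_3[x]$, i.e. to $\tau$ acting on $M$ as a single Jordan block.

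For (e)--(h) I would transport everything through the four nondegenerate Kummer pairings displayed just before the lemma. From $M^i\times{\cal A}/{\cal A}_i\to\mu_3$ one reads $\dim{\cal A}_i=3-\dim M^i$, giving (e) $\Leftrightarrow$ (d); from $M/M_i\times{\cal A}^i\to\mu_3$ one reads $\dim{\cal A}^i=3-\dim M_i$, giving (g) $\Leftrightarrow$ (b); and since $\dim{\cal A}_1=3-\dim M^1$ while $\dim M^1=2\Leftrightarrow\dim M_1=1$ by rank--nullity, one gets (h) $\Leftrightarrow$ (a). Finally, nondegeneracy of $M^i\times{\cal A}/{\cal A}_i\to\mu_3$ and $M_i\times{\cal A}/{\cal A}^i\to\mu_3$ identifies ${\cal A}_i=(M^i)^{\perp}$ (already recorded in the text) and ${\cal A}^i=(M_i)^{\perp}$ with respect to the perfect pairing $M\times{\cal A}\to\mu_3$, so (f) --- the statement ${\cal A}_i={\cal A}^{3-i}$ for all $i$ --- unwinds to $(M^i)^{\perp}=(M_{3-i})^{\perp}$, i.e. $M^i=M_{3-i}$ for all $i$, which is exactly (c) after the relabeling $i\mapsto 3-i$. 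Chaining these equivalences together shows all eight conditions are equivalent.

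I do not expect a serious obstacle here: the real content is the recognition that $M$ is an $\mathbf F_3[x]$-module of length $3$ and that every one of the eight conditions is just a way of detecting that this module is cyclic. The step calling for the most care is the first --- keeping the two filtrations on $M$ (and the two on ${\cal A}$) straight, getting the $i\mapsto 3-i$ relabeling right in (c) and (f), and checking that the perpendicular-space identities coming from the four pairings are mutually consistent. If one prefers not to invoke the structure theorem, the (a)--(d) part can instead be run by hand from $\dim M^i+\dim M_i=3$, the inclusions $M^{3-j}\subseteq M_j$, and the observation that nilpotency of $1-\tau$ forces $\dim M_1\ge 1$ (so the case $\dim M_1=0$ cannot occur); I would keep whichever version reads more cleanly in context.
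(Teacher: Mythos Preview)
Your proposal is correct and follows essentially the same approach as the paper: the Jordan canonical form of $\tau$ (equivalently, the structure theorem applied to the nilpotent operator $1-\tau$) on the $3$-dimensional space $M$ handles (a)--(d), and Kummer duality ties in (e)--(h). The only real difference is organizational: the paper treats (e)--(h) by repeating the Jordan form argument directly on ${\cal A}$ and then uses a single duality link, namely (b) $\Leftrightarrow$ (g), whereas you transport each of (e)--(h) individually back to one of (a)--(d) via the four pairings. One small slip: your parenthetical ``at $j=2$'' should read ``at $j=1$,'' since $M^2\subseteq M_1$ is the inclusion $(1-\tau)^{3-j}M\subseteq M[(1-\tau)^j]$ for $j=1$; this does not affect the argument.
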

\begin{proof} Regard $\tau$ as a linear transformation of $M$.
The characteristic polynomial of $\tau$ is $T^3-1=(T-1)^3$.
The equivalence of (a), (b), (c), (d) follows easily from a 
consideration of the three possibilities
for the Jordan canonical form of $\tau$. 
The equivalence of (e), (f), (g), and (h) follows similarly.

The equivalence of (b) and (g) follows from the duality between
$M_i$ and ${\cal A}/{\cal A}^i$.
\end{proof}

All the standard questions of genus theory for an odd prime 
can be asked about
${\cal A}$ and $M$.  For
instance, let $H_i$ be the fixed field of ${\cal A}^i$.  Then $H_1$ is the ``genus
subfield'' of $H_3$; namely,
$H_1$ is the maximal abelian extension of $F_0$ contained in $H_3$.  Equivalently,
$H_1$ is maximal
with $\tau$ acting trivially on $\text{Gal}(H_1/F_1)$.  
Thus we can call ${\cal A}/{\cal A}^1$ the
``genus group'' for ${\cal A}$.

  We can also define $M/M^1$ to be the ``genus unit group,'' 
as it is the maximal quotient of $M$
with trivial action by $\tau$.  Similarly, we can call ${\cal A}_1$ 
and $M_1$ the ambiguous subgroups.  Then
the nondegenerate pairing
$$M_1\times{\cal A}/{\cal A}^1\to\mu_3$$
says that the ambiguous unit group pairs nontrivially with the genus group.  
A similar statement can be made for
$$M/M^1\times{\cal A}_1\to\mu_3.$$

Thus, ambiguous unit classes are Kummer generators for the 
genus field $H_1$.  In general, the
duality between $M_i$ and ${\cal A}/{\cal A}^i$ says that $M_i$ 
is the group of Kummer generators for the
fixed field of ${\cal A}^i$.

Suppose now that $\dim M_i=i$ for all $i$.  
Then there are units $u_1$, $u_2$, and $u_3$ such
that $u_i\in M_i$ but $u_i\not\in M_{i-1}$, and $\{u_1,u_2,u_3\}$ is a
basis for $M$.  The fixed field
of ${\cal A}^i$ is then $L_1(u_1^{1/3},\ldots,u_i^{1/3})$.

We can extend the definition of genus and ambiguous groups. We call
${\cal A}^i/{\cal A}^{i+1}$ and $M^i/M^{i+1}$
the $i$th higher genus group and we call ${\cal A}_{i+1}/{\cal A}_i$ and
$M_{i+1}/M_i$ the $i$th higher
ambiguous groups.  These are all maximal quotients with trivial $\tau$ action. 
There are also nondegenerate pairings
$$M^i/M^{i+1}\times{\cal A}_{i+1}/{\cal A}_i\to\mu_3$$
and
$$M_{i+1}/M_i\times{\cal A}^i/{\cal A}^{i+1}\to\mu_3.$$
See [5], section 2 for a use of some of these groups.

\section{Capitulation}

Let $A(K_i)$ be the Sylow 3-subgroup of the class group of $K_i$.
There is a natural homomorphism $A(K_0)\to A(K_1)$ that maps
the class of an ideal of $K_0$ to the class of the ideal
generated by that ideal in $K_1$. 
\begin{proposition}\label{caplemma} 
There is an isomorphism $M[1-\tau]/\langle \varepsilon_0
\rangle \simeq \text{Ker}(A(K_0)\to A(K_1))$.
\end{proposition}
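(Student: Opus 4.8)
The plan is to realise both sides as the $g$-minus component of the single cohomology group $H^1(\langle\tau\rangle,E)$, where $E=E_{K_1}$ is the unit group of $K_1$, by chasing two short exact sequences of $\text{Gal}(K_1/\mathbf{Q})$-modules. A few preliminaries. The kernel $\text{Ker}(A(K_0)\to A(K_1))$ coincides with the kernel of the capitulation map on full ideal class groups, because the latter is killed by $[K_1:K_0]=3$ (if $\mathfrak{a}\mathcal{O}_{K_1}$ is principal then so is its norm $\mathfrak{a}^3$). Since $1+g$ is the norm to $\mathbf{Q}$, resp.\ to $\mathbf{B}_1$, and the $3$-class groups of $\mathbf{Q}$ and $\mathbf{B}_1$ vanish, $g$ acts by $-1$ on $A(K_0)$ and on $A(K_1)$. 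Finally, $3$ ramifies in $K_0$ by standing assumption, so $\mathfrak{p}^2=(3)$ for the unique prime $\mathfrak{p}$ of $K_0$ above $3$; $K_1/K_0$ is totally ramified exactly at $\mathfrak{p}$, say $\mathfrak{p}\mathcal{O}_{K_1}=\mathfrak{P}^3$ with $\tau\mathfrak{P}=g\mathfrak{P}=\mathfrak{P}$; and the classes $[\mathfrak{p}]$ and $[\mathfrak{P}]$ both have order dividing $2$ (for $[\mathfrak{P}]$: its $3$-part is $g$-fixed hence trivial, and $[\mathfrak{P}]^6=[\mathfrak{p}\mathcal{O}_{K_1}]^2=1$).

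Step 1: apply $\langle\tau\rangle$-cohomology to $1\to E^3\to E\to E/E^3\to 1$. Cubing is a $\langle\tau\rangle$-isomorphism $E\xrightarrow{\ \sim\ }E^3$ (there are no cube roots of unity in the real field $K_1$), under which the inclusion $E^3\hookrightarrow E$ becomes multiplication by $3$; as $H^1(\langle\tau\rangle,-)$ is killed by $3$, the induced map $H^1(\langle\tau\rangle,E^3)\to H^1(\langle\tau\rangle,E)$ is zero. Using $E^{\langle\tau\rangle}=E_{K_0}$ and the fact that a unit of $K_0$ which is a cube in $K_1$ is already a cube in $K_0$ (apply $\tau$ to a cube root, or use Lemma~\ref{cubelemma}), one obtains a short exact sequence of $\langle g\rangle$-modules
$$0\longrightarrow E_{K_0}/E_{K_0}^3\longrightarrow \left(E/E^3\right)^{\langle\tau\rangle}\longrightarrow H^1(\langle\tau\rangle,E)\longrightarrow 0.$$
Because $\text{Norm}(\varepsilon_0)=+1$, the element $g$ acts by $-1$ on $E_{K_0}/E_{K_0}^3=\langle\varepsilon_0\rangle$, while $\left((E/E^3)^{\langle\tau\rangle}\right)^-=M[1-\tau]$. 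Taking $g$-minus parts (exact, since $2$ is invertible) gives $M[1-\tau]/\langle\varepsilon_0\rangle\cong H^1(\langle\tau\rangle,E)^-$.

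Step 2: apply $\langle\tau\rangle$-cohomology to $1\to E\to K_1^\times\to\mathcal{P}\to 1$, with $\mathcal{P}=\mathcal{P}_{K_1}$ the group of principal fractional ideals. Hilbert~90 gives $H^1(\langle\tau\rangle,K_1^\times)=0$, hence a $\langle g\rangle$-equivariant isomorphism $H^1(\langle\tau\rangle,E)\cong\mathcal{P}^{\langle\tau\rangle}/\mathcal{P}_{K_0}$, where $\mathcal{P}^{\langle\tau\rangle}$ is the group of principal $\tau$-stable ideals and $\mathcal{P}_{K_0}$ is the subgroup generated by the principal ideals of $K_0$. By the ramification picture above, every $\tau$-stable (``ambiguous'') fractional ideal is $\mathfrak{b}\mathcal{O}_{K_1}\mathfrak{P}^a$ with $\mathfrak{b}$ a fractional ideal of $K_0$ and $0\le a\le 2$, and one defines a map $\Phi\colon\mathcal{I}_{K_1}^{\langle\tau\rangle}\to A(K_0)\times(\mathbf{Z}/3\mathbf{Z})$ by $\mathfrak{b}\mathcal{O}_{K_1}\mathfrak{P}^a\mapsto(\text{$3$-part of }[\mathfrak{b}],\,a\bmod 3)$; this respects multiplication because the only relation, replacing $(\mathfrak{b},a)$ by $(\mathfrak{b}\mathfrak{p},a-3)$, alters $[\mathfrak{b}]$ only by the order-$2$ class $[\mathfrak{p}]$, whose $3$-part is trivial. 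Moreover $\Phi$ is $g$-equivariant for the action of $g$ by $-1$ on $A(K_0)$ and trivially on $\mathbf{Z}/3\mathbf{Z}$ (as $g\mathfrak{P}=\mathfrak{P}$).

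Step 3: I claim $\Phi|_{\mathcal{P}^{\langle\tau\rangle}}$ has kernel $\mathcal{P}_{K_0}$ and image $\text{Ker}(A(K_0)\to A(K_1))\times(\mathbf{Z}/3\mathbf{Z})$. Kernel: if $\Phi(\mathfrak{I})=0$ then $\mathfrak{I}=\mathfrak{b}\mathcal{O}_{K_1}$ is principal in $K_1$ and $[\mathfrak{b}]$ has order prime to $3$; taking norms gives $[\mathfrak{b}]^3=1$, so $[\mathfrak{b}]=1$ and $\mathfrak{I}\in\mathcal{P}_{K_0}$ (the reverse inclusion being immediate). Image: for principal $\mathfrak{b}\mathcal{O}_{K_1}\mathfrak{P}^a$ one has $[\mathfrak{b}\mathcal{O}_{K_1}]=[\mathfrak{P}]^{-a}$, of order dividing $2$, so the $3$-part of $[\mathfrak{b}]$ dies in $A(K_1)$ and hence lies in $\text{Ker}(A(K_0)\to A(K_1))$; conversely any $\beta$ in this kernel is hit with $a=0$ by a representing ideal of $3$-power order (which becomes principal in $K_1$ precisely because it has $3$-power order and capitulates), while $(0,1)$ is hit by the principal ambiguous ideal $\mathfrak{p}\mathcal{O}_{K_1}\cdot\mathfrak{P}=\mathfrak{P}^4$. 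Therefore $H^1(\langle\tau\rangle,E)\cong\text{Ker}(A(K_0)\to A(K_1))\times(\mathbf{Z}/3\mathbf{Z})$, and passing to $g$-minus parts annihilates the $\mathbf{Z}/3\mathbf{Z}$, leaving $H^1(\langle\tau\rangle,E)^-\cong\text{Ker}(A(K_0)\to A(K_1))$; combining with Step~1 gives the proposition. The main obstacle is the bookkeeping around the ramified prime $\mathfrak{P}$: correctly describing the ambiguous ideals, arranging for $\Phi$ to land in the $3$-part of the class group so that the harmless $2$-torsion classes $[\mathfrak{p}]$ and $[\mathfrak{P}]$ become invisible, and checking that the spurious $\mathbf{Z}/3\mathbf{Z}$ factor (which merely records the exponent of $\mathfrak{P}$) is exactly what vanishes on the $g$-minus component.
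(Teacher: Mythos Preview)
Your proof is correct and takes a genuinely different, more cohomological route than the paper. The paper builds an explicit map $\psi\colon M[1-\tau]\to\mathrm{Ker}(A(K_0)\to A(K_1))$ by hand: given $\varepsilon$ with $\varepsilon^{1-\tau}=\gamma^3$, it uses Hilbert~90 to write $\gamma=\eta^{1-\tau}$, observes that $(\eta^{-1})$ is an ambiguous ideal $I\mathfrak p_1^a$, and sends $\varepsilon\mapsto I^{1-g}$; it then checks well-definedness, kernel, and surjectivity by direct computation. Your argument instead identifies both sides with $H^1(\langle\tau\rangle,E)^-$, via the two short exact sequences $1\to E^3\to E\to E/E^3\to 1$ and $1\to E\to K_1^\times\to\mathcal P\to 1$, together with a careful analysis of the principal ambiguous ideals. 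If one unwinds your connecting maps, one recovers exactly the paper's $\psi$ (your $\gamma$, $\eta$, and the decomposition $\mathfrak b\mathcal O_{K_1}\mathfrak P^a$ match the paper's $\gamma$, $\eta$, and $I\mathfrak p_1^a$), so the underlying mathematics is the same; but your packaging is cleaner and makes the Galois-module structure transparent. A bonus of your approach is that it computes the full $H^1(\langle\tau\rangle,E)\cong\mathrm{Ker}(A(K_0)\to A(K_1))\times\mathbf Z/3\mathbf Z$, which is precisely the cohomology group the paper invokes (via the Herbrand quotient) in the very next proposition; your Step~3 thus unifies Propositions~\ref{caplemma} and~\ref{oneorthreeprop} into a single computation, whereas the paper treats them separately. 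The paper's explicit formula, on the other hand, is closer in spirit to the explicit unit constructions in Theorem~\ref{keylemma} that follow.
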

\begin{proof} Let $\varepsilon \in M[1-\tau]$. Then $\varepsilon^{1-\tau}
=\gamma^3$ for some $\gamma\in E$. Taking the norm to $K_0$ yields
$\text{Norm}(\gamma)^3=1$. Since $\zeta_3\not\in K_0$, we have
$\text{Norm}(\gamma)=1$. By Hilbert's Theorem 90, $\gamma=\eta^{1-\tau}$
for some $\eta\in K_1$. Then $\alpha=\varepsilon/\eta^3$ is fixed by $\tau$,
so it lies in $K_0$. Let $J=(\eta^{-1})$. Then $J^3=(\alpha)=(\tau\alpha)=(\tau J)^3$, 
so $J$ is
fixed by $\tau$ (alternatively, $J^{1-\tau}=(\gamma)=(1)$). Therefore, $J=I\mathfrak p_1^a$ for some integer $a$,
where $I$ comes from $K_0$ and $\mathfrak p_1$ is the prime of $K_1$
above $3$. This implies that $I^{1-g}=J^{1-g}=(\eta^{g-1})$.
Therefore, $I^{1-g}\in \text{Ker}(A(K_0)\to A(K_1))$. 
Define
\begin{align*}
\psi: M[1-\tau] &\longrightarrow \text{Ker}(A(K_0)\to A(K_1))\\
\varepsilon \qquad&\longmapsto \qquad I^{1-g}.
\end{align*}
It is straightforward to check that the ideal class of $I^{1-g}$ is independent of
the various choices made and depends only on $\varepsilon$ mod cubes. Therefore,
$\psi$ is well-defined.

If $\varepsilon=\varepsilon_0$, then we may take $\gamma=\eta=1$,
so $\alpha=\varepsilon_0$ and $J$ and $I^{1-g}$ are trivial. Therefore,
$\varepsilon_0\in \text{Ker}(\psi)$.

Suppose $\psi(\varepsilon)=1$. Then $I^{1-g}=(\delta)$ for some
$\delta\in K_0$. Then 
$$
(\delta^3)= I^{3(1-g)} = (\alpha^{1-g}).
$$
Since $\alpha, \delta\in K_0$, we have
$\alpha^{1-g}/\delta^3=\pm\varepsilon_0^b$ for some $b$. Therefore,
$\varepsilon^{1-g}\equiv\alpha^{1-g}\equiv \varepsilon_0^b$ mod cubes.
Since $\varepsilon \in M=(E/E^3)^{1-g}$, we have
$\varepsilon^{1-g}\equiv \varepsilon^2$ mod cubes. Therefore,
$\varepsilon^2$, and hence $\varepsilon$, is in $\langle\varepsilon_0\rangle$
mod cubes. Consequently,
the kernel of $\psi$ is $\langle\varepsilon_0\rangle$.

Now let $B$ be an ideal in $\text{Ker}(A(K_0)\to A(K_1))$. 
Let $B=(y)$ with $y\in K_1$. Then $y^{1-\tau}\in E$.
Since the norm from $K_1$ to $K_0$ of a principal ideal is
principal, we have $B^3=(x)$ with $x\in K_0$. 
Then $y^3=ux$ for a unit $u\in E$. But $u=x^{-1}y^3$,
so $u^{1-g}\in M[1-\tau]$. Since $u^{1-\tau}=(y^3)^{1-\tau}$,
we take $\gamma=(y^{1-g})^{1-\tau}$ and $\eta=y^{1-g}$. Then
$(\eta^{1-g})=(y)^{2(1-g)}=B^{2(1-g)}$, so $\psi(u^{1-g})=B^{2(1-g)}$.
But the ideal class of $B^{g}$ is the ideal class of $B^{-1}$, and the ideal
class of $B$ has order 3 since it capitulates, so $\psi(u^{1-g})=B$.
Therefore, $\psi$ is surjective.\end{proof}

\begin{proposition}\label{oneorthreeprop} The kernel of the map $A(K_0)\to A(K_1)$
has order 1 or 3. 
\end{proposition}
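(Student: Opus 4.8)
The plan is to use Proposition~\ref{caplemma} to translate the assertion into a statement about the module $M$, and then to reduce that to a linear-algebra fact about the action of $\tau$. By Proposition~\ref{caplemma} the kernel of $A(K_0)\to A(K_1)$ is isomorphic to $M[1-\tau]/\langle\varepsilon_0\rangle = M_1/\langle\varepsilon_0\rangle$. Since $\varepsilon_0$ gives a nontrivial element of $M_1$ (by Lemma~\ref{ep0lemma}), this quotient has order $3^{\dim M_1-1}$, so it suffices to prove $\dim M_1\le 2$. Viewing $1-\tau$ as an $\mathbf F_3$-linear endomorphism of the $3$-dimensional space $M$, its kernel is $M_1$ and its image is $M^1=(1-\tau)M$, so $\dim M_1+\dim M^1=3$; hence $\dim M_1\le 2$ is equivalent to $(1-\tau)M\ne 0$, that is, to $\tau$ acting nontrivially on $M$. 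In the notation of Lemma~\ref{dimlemma}, we must exclude $\dim M_1=3$.

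First I would check that $\tau$ acts nontrivially on the rational module $V=(E\otimes\mathbf Q)^{1-g}$, which has dimension $3$, like $M$. The $\tau$-fixed subspace of $V$ is $V^\tau=(E\otimes\mathbf Q)^\tau\cap V$, and $(E\otimes\mathbf Q)^\tau=E_{K_0}\otimes\mathbf Q$ because $K_1^\tau=K_0$. Because $3$ ramifies in $K_0$ we have $\mathrm{Norm}_{K_0/\mathbf Q}(\varepsilon_0)=+1$, so $g$ inverts $\varepsilon_0$; thus $E_{K_0}\otimes\mathbf Q=\mathbf Q\,\varepsilon_0$ is contained in $V$, and $V^\tau=\mathbf Q\,\varepsilon_0$ is one-dimensional. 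As $\dim V=3$, the automorphism $\tau$ is not the identity on $V$, so one of $1-\zeta_3$, $1-\zeta_3^{-1}$ occurs as an eigenvalue of $1-\tau$ on $V$.

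The crux is to pass from $V$ to $M$. Let $N$ be the $(-1)$-eigenlattice of $g$ in the unit lattice of $K_1$; then $N$ is a $\mathbf Z[\langle\tau\rangle]$-lattice, free of rank $3$, with $N\otimes\mathbf Q=V$, and $M$ may be identified with $N/3N$. Suppose, for contradiction, that $(1-\tau)M=0$. Then $(1-\tau)N\subseteq 3N$, so $\phi:=\tfrac13(1-\tau)$ is a well-defined $\mathbf Z$-linear endomorphism of $N$; expressing $\phi$ by an integer matrix in a $\mathbf Z$-basis of $N$ shows that every eigenvalue of $\phi$ on $V$ is an algebraic integer. On the other hand, the eigenvalues of $1-\tau$ on $V$ lie in $\{0,\,1-\zeta_3,\,1-\zeta_3^{-1}\}$, so those of $\phi$ lie in $\{0,\,\tfrac{1-\zeta_3}{3},\,\tfrac{1-\zeta_3^{-1}}{3}\}$; and since $(1-\zeta_3)^2=-3\zeta_3$, the numbers $\tfrac{1-\zeta_3^{\pm1}}{3}$ are not algebraic integers (their squares equal $-\zeta_3^{\pm1}/3$, which are not integral). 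Hence neither $1-\zeta_3$ nor $1-\zeta_3^{-1}$ is an eigenvalue of $1-\tau$ on $V$; as $\tau$ has order $3$ and so is semisimple on $V$, this forces $(1-\tau)V=0$, contradicting the previous paragraph. Therefore $(1-\tau)M\ne 0$, so $\dim M_1\le 2$ and the kernel has order $1$ or $3$.

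The step I expect to be the main obstacle is the last one: a priori the Jordan type of $\tau$ on the $\mathbf F_3$-vector space $M$ may be coarser than on $V$ — for instance $1-\tau$ is nonzero but nilpotent on $\mathbf Z[\zeta_3]/3\mathbf Z[\zeta_3]$ — so one must rule out the degenerate reduction in which $\tau$ acts trivially modulo $3$. That the $3$-adic valuation of $1-\zeta_3$ is strictly between $0$ and $1$ is precisely what prevents this degeneration; a purely cohomological estimate via $H^1(\langle\tau\rangle,E_{K_1})$ would only bound the kernel by $9$.
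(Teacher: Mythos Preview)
Your proof is correct and follows a genuinely different route from the paper's. The paper argues cohomologically: it injects the capitulation kernel into $H^1(K_1/K_0,E)$, bounds $|H^1|\le 9$ via the Herbrand quotient (using that $\widehat H^0$ has order $1$ or $3$), and then constructs an explicit cocycle from the totally ramified prime above $3$ that lies outside the image, cutting the bound down to $3$. You instead invoke Proposition~\ref{caplemma} to reduce to $\dim M_1\le 2$ and settle this by a lattice--integrality argument: since $V^\tau=\mathbf Q\,\varepsilon_0$ is one-dimensional, $1-\zeta_3^{\pm1}$ must occur as an eigenvalue of $1-\tau$ on $V$, and the non-integrality of $(1-\zeta_3^{\pm1})/3$ rules out $(1-\tau)N\subseteq 3N$. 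In effect you prove Proposition~\ref{nontrivprop} directly and then read off Proposition~\ref{oneorthreeprop}, while the paper proceeds in the opposite order. Your argument is tidy and avoids the explicit cocycle; the paper's approach has the compensating virtue that the size of $\widehat H^0$ (i.e.\ whether $\varepsilon_0$ is a norm from $K_1$) appears explicitly, which is exactly the dichotomy separating Cases~(i) and~(ii) in Theorem~\ref{keylemma}. One small point worth making explicit in your write-up is why $M\simeq N/3N$: this holds because $g$ has order prime to $3$, so the $(\pm1)$-eigenspace decomposition of the unit lattice is compatible with reduction mod~$3$.
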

\begin{proof}
Let $I$ be an ideal of $K_0$ that becomes principal in $K_1$.
Say $I=(y)$ with $y\in K_1$. Then $y^{1-\tau}\in E$. It is easy
to see that the map $\rho: I\mapsto (\tau\mapsto y^{1-\tau})$
gives an injective homomorphism from $\text{Ker}(A(K_0)\to A(K_1))$
to $H^1(K_1/K_0, E)$ (the image is the group of locally
trivial cohomology classes; see \cite{schmithals}). 

The Herbrand quotient is given by
$$
\frac{\left|\widehat{H}^0(K_1/K_0,E)\right|}{\left|H^1(K_1/K_0, E)\right|}
= \frac13
$$
(see \cite[IX, \S 4, Corollary 2]{lang}),
where $\widehat{H}^0(K_1/K_0, E)$ is the group of 
units of $K_0$
mod norms of units. Since $\varepsilon_0^3=\text{Norm}(\varepsilon_0)$,
the norms of units are either $\pm \langle \varepsilon_0\rangle$
or $\pm \langle \varepsilon_0^3\rangle$. Therefore, $|\widehat{H}^0|=1$ 
or 3. Therefore, $|H^1|=3$ or 9. 

Let $\mathfrak p_1$ be the prime of $K_1$ above $3$. Since $\mathbf B_1$
has class number 1 and $\mathfrak p_1^2$ is the prime of $\mathbf B_1$
above $3$, we have $\mathfrak p_1^2=(\beta)$ for some $\beta\in \mathbf B_1$.
The cocycle $\tau\mapsto \beta^{1-\tau}$ gives a cohomology class
in $H^1(K_1/K_0, E)$. We claim that it is not in the image of $\rho$.
If it is, then there is an ideal $I$ of $K_0$ with $I=(y)$ in $K_1$
and such that $y^{1-\tau}=\beta^{1-\tau}$. Then $y/\beta\in K_0$.
Let $v$ be the $\mathfrak p_1$-adic valuation normalized so that $v(\mathfrak p_1)=1$.
Then $v(\beta)=2$ and $v(y/\beta)=v(I)-2\equiv 1\pmod 3$ since $I$ comes from $K_0$.
This contradicts the fact that $y/\beta\in K_0$. Therefore, the cocycle
is not in the image of $\rho$.
Consequently, the image of $\rho$ has order 1 or 3. Since $\rho$ is injective,
this completes the proof.\end{proof}

\begin{proposition}\label{nontrivprop} $\tau$ does not act trivially on $(E/E^3)^{1-g}$.
That is, $M[1-\tau]$ has dimension 1 or 2. It has dimension 1 if and only if
the map $A(K_0)\to A(K_1)$ is injective.
\end{proposition}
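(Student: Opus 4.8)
The plan is to deduce everything directly from Proposition~\ref{caplemma} and Proposition~\ref{oneorthreeprop}, together with the fact that $\varepsilon_0$ is nontrivial modulo cubes. First I would record the two structural facts already in hand: $\dim_{\mathbf F_3} M = 3$ (from Section~4), and $M[1-\tau]$ is exactly the $\tau$-fixed subspace of $M$ (since $\tau^3=1$, so $\ker(1-\tau)=\{m:\tau m=m\}$). Hence ``$\tau$ acts trivially on $M$'' is equivalent to $\dim M[1-\tau]=3$, and what must be shown is that $\dim M[1-\tau]\in\{1,2\}$, with the value $1$ characterized as in the statement.

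Next I would invoke Proposition~\ref{caplemma}, which supplies the isomorphism
$$
M[1-\tau]/\langle \varepsilon_0\rangle \;\simeq\; \text{Ker}\bigl(A(K_0)\to A(K_1)\bigr).
$$
Before using this I would check that $\varepsilon_0$ genuinely contributes an order-$3$ subgroup of $M[1-\tau]$. It lies in $K_0$, hence is fixed by $\tau$; and since $3\mid D$ forces $\text{Norm}(\varepsilon_0)=+1$, the element $g$ (which also generates $\text{Gal}(K_0/\mathbf Q)$) sends $\varepsilon_0$ to $\varepsilon_0^{-1}$, so $\varepsilon_0\in(E/E^3)^{1-g}=M$ and therefore $\varepsilon_0\in M[1-\tau]$. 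By Lemma~\ref{ep0lemma} it is not a cube in $L_1$, a fortiori not in $K_1$, so $\langle\varepsilon_0\rangle$ has order exactly $3$ in $M[1-\tau]$. Consequently $|M[1-\tau]| = 3\cdot\bigl|\text{Ker}(A(K_0)\to A(K_1))\bigr|$.

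Finally, Proposition~\ref{oneorthreeprop} says that kernel has order $1$ or $3$, so $|M[1-\tau]|$ is $3$ or $9$, i.e.\ $\dim M[1-\tau]\in\{1,2\}$. In particular it is strictly less than $\dim M = 3$, so $\tau$ does not act trivially on $M$, proving the first assertion. Moreover $\dim M[1-\tau]=1$ exactly when the kernel is trivial, i.e.\ exactly when $A(K_0)\to A(K_1)$ is injective, which is the final assertion.

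There is no real obstacle remaining: the substance is already packaged in Propositions~\ref{caplemma} and~\ref{oneorthreeprop}, and the proof is just an order count. The only point needing a little care is confirming that $\langle\varepsilon_0\rangle$ sits inside $M[1-\tau]$ as a subgroup of order $3$ rather than being trivial — that is, that $\varepsilon_0$ is a nontrivial element of $M$ — which is why Lemma~\ref{ep0lemma} and the norm computation are invoked; granting that, comparing orders finishes everything.
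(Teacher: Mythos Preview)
Your proposal is correct and is essentially the same argument the paper intends: the paper's own proof is the single sentence ``This follows immediately from the preceding lemma and proposition,'' meaning exactly Propositions~\ref{caplemma} and~\ref{oneorthreeprop}, and you have simply spelled out the order count and the check that $\varepsilon_0$ is a nontrivial element of $M[1-\tau]$ (which the paper records just before Lemma~\ref{dimlemma}).
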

\begin{proof} This follows immediately from the preceding lemma and proposition.
\end{proof}

The following is what we need in subsequent sections.
 
\begin{theorem}\label{keylemma} Exactly one of the following cases holds:
\newline
{\bf Case (i)}\begin{enumerate}
\item[(a)] $M[1-\tau]$ has dimension 1 as an 
$\mathbf F_3$-vector space.
\item[(b)] The map from the class group
of $K_0$ to the class group of $K_1$ is injective.
\item[(c)] The norm map from the units of $K_1$ to the units of $K_0$ is surjective.
\item[(d)] There are units $\varepsilon_1, \varepsilon_2\in K_1$ such that
$\{\varepsilon_0, \varepsilon_1, \varepsilon_2\}$ is a basis for $M$ and
such that
$$
\varepsilon_2^{1-\tau}=\varepsilon_1, \quad \text{ and }\; \varepsilon_2^{1+\tau+\tau^2}=\varepsilon_0.
$$
\end{enumerate}
{\bf Case (ii)}
\begin{enumerate}
\item[(a)] $M[1-\tau]$ has dimension 2 as an 
$\mathbf F_3$-vector space.
\item[(b)] The kernel of the map from the class group
of $K_0$ to the class group of $K_1$ has order 3.
\item[(c)] The cokernel of the norm map from the units of $K_1$ to 
the units of $K_0$ is has order 3.
\item[(d)] There are units $\varepsilon_1, \varepsilon_2\in K_1$ such that
$\{\varepsilon_0, \varepsilon_1, \varepsilon_2\}$ is a basis for $M$ and
such that
$$
\varepsilon_2^{1-\tau}=\varepsilon_1, \quad \text{ and }\; \varepsilon_2^{1+\tau+\tau^2}=1.
$$
\end{enumerate}
\end{theorem}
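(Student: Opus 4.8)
The plan is to show that the three enumerated dichotomies in each case are really a single dichotomy, driven by $\dim M[1-\tau]$, and then to construct the explicit basis in part (d). First I would establish the equivalence of (a) and (b) in both cases: this is exactly the content of Proposition~\ref{nontrivprop} together with Proposition~\ref{oneorthreeprop}, which already tell us that $M[1-\tau]$ has dimension $1$ or $2$, that $\dim M[1-\tau]=1$ iff $A(K_0)\to A(K_1)$ is injective, and that the kernel of that map has order $1$ or $3$. So Case (i)(a) $\iff$ Case (i)(b), and Case (ii)(a) $\iff$ Case (ii)(b), and the two cases are mutually exclusive and exhaustive. Next I would handle (c). The cohomological computation in the proof of Proposition~\ref{oneorthreeprop} shows $|\widehat H^0(K_1/K_0,E)|=1$ or $3$, and $\widehat H^0(K_1/K_0,E)$ is precisely the cokernel of the norm map $E\to E_{K_0}$ (units of $K_0$ mod norms of units). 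The Herbrand quotient identity $|\widehat H^0|/|H^1|=1/3$ combined with the fact (also proved there, via the prime $\mathfrak p_1$ above $3$) that $|H^1|\in\{3,9\}$ forces: $|H^1|=3\iff|\widehat H^0|=1$, and $|H^1|=9\iff|\widehat H^0|=3$. Since $\mathrm{Ker}(A(K_0)\to A(K_1))$ injects into $H^1$ as the group of locally trivial classes, and the excluded cocycle $\tau\mapsto\beta^{1-\tau}$ spans a complement, the kernel has order $|H^1|/3$. Chaining these equivalences gives: kernel order $1\iff|\widehat H^0|=1\iff$ norm surjective, which is Case (i)(c); and kernel order $3\iff|\widehat H^0|=3\iff$ cokernel of norm has order $3$, which is Case (ii)(c).

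The real work is part (d), the construction of $\varepsilon_1,\varepsilon_2$. Here I would argue with $\tau$ as a nilpotent-plus-identity linear operator on the $3$-dimensional $\mathbf F_3$-vector space $M$: write $N=1-\tau$, so $N^3=0$. By Lemma~\ref{ep0lemma}, $\varepsilon_0$ is a nontrivial element of $M$, and by the remark following Lemma~\ref{dimlemma} it lies in $M[1-\tau]=M_1=\ker N$.

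In Case (i), $\dim\ker N=1$, so $N$ has a single Jordan block and $N^2\ne 0$; pick any $\varepsilon_2\in M$ with $N^2\varepsilon_2\ne 0$. Then $\{N^2\varepsilon_2,\ N\varepsilon_2,\ \varepsilon_2\}$ is a basis of $M$ (the standard Jordan-chain basis), and $N^2\varepsilon_2\in\ker N=\langle\varepsilon_0\rangle$, so after replacing $\varepsilon_2$ by a suitable power we may assume $N^2\varepsilon_2=\varepsilon_0$, i.e. $\varepsilon_2^{(1-\tau)^2}=\varepsilon_0$ mod cubes. Set $\varepsilon_1=N\varepsilon_2=\varepsilon_2^{1-\tau}$. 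Then $\{\varepsilon_0,\varepsilon_1,\varepsilon_2\}$ is a basis, $\varepsilon_2^{1-\tau}=\varepsilon_1$, and since $1+\tau+\tau^2=3-3N+N^2\equiv N^2\pmod 3$ (as $3=0$ in $\mathbf F_3$, i.e. these relations hold modulo cubes), we get $\varepsilon_2^{1+\tau+\tau^2}=N^2\varepsilon_2=\varepsilon_0$.

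In Case (ii), $\dim\ker N=2$, so $N^2=0$ but $N\ne 0$; the Jordan form of $\tau$ is one $2\times 2$ block plus one $1\times 1$ block. The image $NM$ is $1$-dimensional and contained in $\ker N$. I would first check that $NM=\langle\varepsilon_0\rangle$, equivalently that $\varepsilon_0\in NM$: this is where Case (i)(d) versus Case (ii)(d) genuinely diverge, and it should follow from Proposition~\ref{caplemma}, since the image of $\psi$ measures exactly how $\varepsilon_0$ sits relative to the rest of $M[1-\tau]$ — concretely, $\varepsilon_0$ generating $\ker\psi$ inside the $2$-dimensional $M[1-\tau]$, together with the identification of $M[1-\tau]/\langle\varepsilon_0\rangle$ with the order-$3$ capitulation kernel, should pin down that $\varepsilon_0$ lies in (indeed spans) $NM$. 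Granting $NM=\langle\varepsilon_0\rangle$, pick $\varepsilon_2\in M$ with $N\varepsilon_2\ne 0$; adjusting by a power, $N\varepsilon_2=\varepsilon_0$. Set $\varepsilon_1=N\varepsilon_2=\varepsilon_0$...

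wait — that is wrong: I need $\{\varepsilon_0,\varepsilon_1,\varepsilon_2\}$ a basis and $\varepsilon_2^{1-\tau}=\varepsilon_1$, so $\varepsilon_1=N\varepsilon_2$ cannot equal $\varepsilon_0$. So instead, in Case (ii) I choose $\varepsilon_2$ with $N\varepsilon_2\ne 0$ and I do \emph{not} normalize $N\varepsilon_2$ to be $\varepsilon_0$; rather I set $\varepsilon_1=N\varepsilon_2$, note $\varepsilon_1\in\ker N=M[1-\tau]$, and note $\{\varepsilon_1,\varepsilon_2\}$ together with a complementary vector in $\ker N$ form a basis. Since $\dim M[1-\tau]=2$ and $\varepsilon_0,\varepsilon_1$ both lie there, and $\varepsilon_0$ generates $NM=\langle\varepsilon_1\rangle$... so $\varepsilon_1\in\langle\varepsilon_0\rangle$, forcing $\varepsilon_1=\varepsilon_0^{\pm1}$, contradiction again.

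The resolution: in Case (ii), $NM$ is $1$-dimensional inside the $2$-dimensional $\ker N$, and $\varepsilon_0$ is a \emph{complement} to $NM$ in $\ker N$ (i.e. $\varepsilon_0\notin NM$) — this is the correct reading of $\psi$, whose kernel $\langle\varepsilon_0\rangle$ maps isomorphically onto... no. Let me state the plan honestly: the key dichotomy is whether $\varepsilon_0\in NM$ (Case (i), where $N^2M=\langle\varepsilon_0\rangle\subseteq NM$) or $\varepsilon_0\notin NM$ (Case (ii), where $NM$ is $1$-dimensional, spanned by some $\varepsilon_1=N\varepsilon_2\notin\langle\varepsilon_0\rangle$). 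I would prove this dichotomy via Proposition~\ref{caplemma}, then in Case (ii) take $\varepsilon_2$ with $\varepsilon_1:=N\varepsilon_2\ne 0$; then $\varepsilon_1\notin\langle\varepsilon_0\rangle$, so $\{\varepsilon_0,\varepsilon_1,\varepsilon_2\}$ spans $M$ (three vectors: $\varepsilon_1=N\varepsilon_2$ is independent of $\varepsilon_2$, and $\varepsilon_0$ is independent of $\ker N\supseteq$... need $\varepsilon_0\notin\langle\varepsilon_1\rangle$, which holds, and $\varepsilon_0\notin NM+\langle\varepsilon_2\rangle$? dimension count: $NM+\langle\varepsilon_2\rangle$ is $2$-dim and equals $\langle\varepsilon_1,\varepsilon_2\rangle$; is $\varepsilon_0$ outside it? $\varepsilon_0\in\ker N$, $\langle\varepsilon_1,\varepsilon_2\rangle\cap\ker N=\langle\varepsilon_1\rangle$ since $\varepsilon_2\notin\ker N$, and $\varepsilon_0\notin\langle\varepsilon_1\rangle$, so yes $\varepsilon_0\notin\langle\varepsilon_1,\varepsilon_2\rangle$). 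Hence $\{\varepsilon_0,\varepsilon_1,\varepsilon_2\}$ is a basis. Finally $\varepsilon_2^{1-\tau}=\varepsilon_1$ by construction, and $\varepsilon_2^{1+\tau+\tau^2}\equiv\varepsilon_2^{N^2}\pmod{\text{cubes}}=1$ since $N^2=0$ on $M$ in this case. This gives Case (ii)(d).

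The main obstacle, then, is pinning down precisely where $\varepsilon_0$ sits with respect to the image filtration $M\supseteq NM\supseteq N^2M$, i.e. proving the $\varepsilon_0\in NM$ vs.\ $\varepsilon_0\notin NM$ dichotomy and aligning it with Cases (i) and (ii); everything else is linear algebra over $\mathbf F_3$ together with bookkeeping between the multiplicative notation ($\varepsilon^{1-\tau}=\varepsilon^2$ on $M$, $1+\tau+\tau^2\equiv(1-\tau)^2\bmod 3$) and the additive operator language. I expect this alignment to come from a careful re-examination of $\psi$ in Proposition~\ref{caplemma}: in Case (i) the capitulation kernel is trivial so $M[1-\tau]=\langle\varepsilon_0\rangle$ is $1$-dimensional and automatically $\varepsilon_0$ generates $N^2M$ (the unique minimal nonzero $\tau$-stable subspace); in Case (ii) the order-$3$ capitulation kernel forces $\dim M[1-\tau]=2$ with $\tau$ having a single $2\times 2$ block, and one must check the surjectivity argument in Proposition~\ref{caplemma} produces elements realizing $NM$ as distinct from $\langle\varepsilon_0\rangle$ — concretely, the construction there with $u^{1-g}$ and $\psi(u^{1-g})=B\ne 1$ exhibits an element of $M[1-\tau]$ outside $\langle\varepsilon_0\rangle$, and one shows this element lies in $NM$.
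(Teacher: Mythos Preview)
Your overall strategy---reduce to the Jordan form of $N=1-\tau$ on the $3$-dimensional $\mathbf F_3$-space $M$---is the same as the paper's, and your equivalence (a)$\Leftrightarrow$(b) via Propositions~\ref{caplemma}--\ref{nontrivprop} is exactly right. But there are two genuine gaps.

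\textbf{The relations in (d) are exact equalities of units, not congruences modulo cubes.} You establish $\varepsilon_2^{1+\tau+\tau^2}\equiv\varepsilon_0\pmod{E^3}$ (Case~(i)) and $\varepsilon_2^{1+\tau+\tau^2}\equiv 1\pmod{E^3}$ (Case~(ii)), but the theorem asserts honest equalities, and these are used in that form in Section~5. The fix is one more step: in Case~(i), $\varepsilon_2^{1+\tau+\tau^2}=\mathrm{Norm}_{K_1/K_0}(\varepsilon_2)\in K_0$, so the cube $\delta_1^3:=\varepsilon_0^{-1}\varepsilon_2^{1+\tau+\tau^2}$ lies in $K_0$, and Lemma~\ref{cubelemma} forces $\delta_1\in K_0$; replacing $\varepsilon_2$ by $\varepsilon_2/\delta_1$ (which leaves $\varepsilon_2^{1-\tau}$ unchanged) gives the exact relation. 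Case~(ii) is similar once you know the image of the norm, but for that you need (ii)(c).

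\textbf{In Case~(ii) you never prove $\varepsilon_0\notin NM$, and your route to (c) does not close.} Your cohomological argument for (c) asserts that the capitulation kernel has order exactly $|H^1|/3$ because the cocycle $\tau\mapsto\beta^{1-\tau}$ ``spans a complement'' to the image of $\rho$; but Proposition~\ref{oneorthreeprop} only shows this cocycle lies \emph{outside} the image, giving $\le$, not $=$. The paper sidesteps this entirely: it proves (ii)(a)$\Rightarrow$(ii)(c) directly (since $N^2=0$ on $M$ forces $1+\tau+\tau^2\equiv N^2\equiv 0$ on all of $E/E^3$, so the norm lands in cubes), and then uses (ii)(c) to get the step you are missing. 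Namely, if $\varepsilon_0\in NM$, write $u^{1-\tau}=\varepsilon_0^{\pm1}\delta^3$; applying $\mathrm{Norm}_{K_1/K_0}$ gives $1=\varepsilon_0^{\pm3}\mathrm{Norm}(\delta)^3$, hence $\varepsilon_0=\mathrm{Norm}(\delta^{\mp1})$, contradicting (ii)(c). That is the clean argument; trying to extract $\varepsilon_0\notin NM$ from the surjectivity construction in Proposition~\ref{caplemma}, as you propose, does not obviously work, because that construction produces elements of $M[1-\tau]$ outside $\langle\varepsilon_0\rangle$ without locating them relative to $NM$. Once you have $\varepsilon_0\notin NM$ and (ii)(c), your linear-algebra basis argument is fine, and the exact normalization $\mathrm{Norm}(\varepsilon_2)=1$ follows by writing $\mathrm{Norm}(u_3)=\pm\varepsilon_0^{3k}$ (using that the norm image has index $3$) and replacing $u_3$ by $\pm u_3/\varepsilon_0^k$.
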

\begin{proof} Proposition~\ref{nontrivprop} says that exactly one of (i)(a) and (ii)(a)
holds.
Proposition~\ref{caplemma} 
implies that (a) is equivalent to (b) in both cases.

Clearly (d) implies (c) in both cases.

We now prove that (i)(a) implies (i)(d). Assume (i)(a).
By Lemma~\ref{dimlemma}, we have
$$
\dim M[(1-\tau)^j]=\dim (1-\tau)^{3-j}M=j.
$$
In particular, 
there are units $u_1$ and $u_2$ such that
$$
u_2^{1-\tau}=u_1, \quad \text{ and }\; u_1^{1-\tau}=\varepsilon_0\delta^3
$$
for some $\delta \in E$. (Note that we do not need to multiply $u_1$ by a cube
since we can simply modify it by a cube, if necessary.)
This implies that 
$$u_2^{1+\tau+\tau^2}=u_2^{(1-\tau)^2}u_2^{3\tau}=\varepsilon_0\delta_1^3$$
for some unit $\delta_1\in K_1$. Moreover, $\delta_1^3=\varepsilon_0^{-1}u_2^{1+\tau+\tau^2}
\in K_0$. Lemma 4 implies that $\delta_1\in K_0$.
Letting $\varepsilon_2=u_2/\delta_1$ and $\varepsilon_1=\varepsilon_2^{1-\tau}=u_1$,
we have a basis $\{\varepsilon_0, \varepsilon_1, \varepsilon_2\}$ of $M$ such that
$$
\varepsilon_1=\varepsilon_2^{1-\tau}, \quad \varepsilon_0=\varepsilon_2^{1+\tau+\tau^2}.
$$
Therefore, (i)(a) implies (i)(b), (i)(c), and (i)(d).

Now assume (ii)(a). 
Since $M[1-\tau]$ has dimension greater than 1, Lemma~\ref{dimlemma} implies that 
$(1-\tau)^2$ annihilates $M$. Let $\varepsilon\in M$. Then
$$
\varepsilon^{1+\tau+\tau^2}=\varepsilon^{(1-\tau)^2}\varepsilon^{3\tau}=1\in M.
$$
Since the norm maps $E^{1+g}$ to powers of $\varepsilon_0^{1+g}=1$, the map
$1+\tau+\tau^2$ annihilates $E/E^3=(E/E^3)^{1+g}\oplus M$. Therefore,
the norm from units of $K_1$ to units of $K_0$ is not surjective.
Since $\varepsilon_0^3$ is in the image, the cokernel has order 3, which is (ii)(c).

Again assuming (ii)(a), we have units $u_1, u_2, u_3$ that form a basis
of $M$ and such that $u_3^{1-\tau}=u_2$, $u_2^{1-\tau}=\delta_2^3$, 
and $u_1^{1-\tau}=\delta_1^3$
for some units $\delta_i$. 

We claim that $u_2$ is not equivalent to $\varepsilon_0^{\pm 1}$
mod cubes: Suppose that $u_2=\varepsilon^{\pm 1}\delta^3$ for some unit $\delta$.
Applying the norm for $K_1/K_0$ 
to the relation $u_3^{1-\tau}=\varepsilon_0^{\pm 1}\delta^3$
yields $\varepsilon_0^3=\text{Norm}(\delta)^{\mp 3}$. Since $\zeta_3\not\in K_1$,
we obtain $\varepsilon_0=\text{Norm}(\delta^{\mp 1})$, which means that
the norm is surjective. Since (ii)(a) implies (ii)(c),
which implies that the norm is not surjective, we have a contradiction.

Write $\varepsilon_0\equiv u_1^a u_2^b u_3^c$ mod cubes. Applying
$1-\tau$ yields $1\equiv u_2^c$ mod cubes. Therefore, $c\equiv 0\pmod 3$. 
By the claim, we cannot also have $a\equiv 0\pmod 3$. It follows that we
may replace $u_1$ by $\varepsilon_0$ and obtain a basis of $M$ satisfying
the same relations as $u_1, u_2, u_3$. We therefore assume that $u_1=\varepsilon_0$.

Since the image of the norm map has index 3, we have $\text{Norm}(u_2)=\pm
(\varepsilon_0)^{3k}$ for some $k$, and by changing the sign of $u_3$ 
if necessary we may assume that $\pm = +$. Then $\text{Norm}(u_3/\varepsilon_0^k)=1$.
Let
$$
\varepsilon_2=u_3/\varepsilon_0^k, \quad \varepsilon_1= \varepsilon_2^{1-\tau}=u_3^{1-\tau}=u_2.
$$
These are the desired units.

Therefore, (ii)(a) implies (ii)(b), (ii)(c), and (ii)(d).
\end{proof}

Continuing with the analogy with genus theory, we can ask if every ambiguous unit
class in $M_1=M[1-\tau]$ contains an ambiguous unit.  The group 
of ambiguous units is generated
by $\varepsilon_0$.  In case (i), where there is no capitulation from $K_0$ to
$K_1$, the unit $\varepsilon_0$
generates the group of ambiguous unit classes, so every ambiguous unit class contains
an ambiguous unit.  However, in case (ii), when there is capitulation from $K_0$ to $K_1$, 
the dimension of $M_1$ is 2.  It is curious to note that in this case, since
$\varepsilon_0$ generates a
subgroup of dimension 1, there are ambiguous unit classes that do not contain an
ambiguous unit.

\section{Subfields}

Let $\widetilde{H}_i=L_1(\varepsilon_0^{1/3}, \dots, \varepsilon_{i-1}^{1/3})$.
Then $g$ acts as $-1$ on the Kummer generators of $\widetilde{H}_i/L_1$
and acts as $-1$ on $\mu_3$, so $g$ acts trivially on $\text{Gal}(\widetilde{H}_i/L_1)$.
Therefore, $\widetilde{H}_i/F_1$ is abelian and has a unique element
of order 2, which we call $g$. Let $H_i$ be the fixed field of $g$.
Then $H_i/F_1$ is Galois with group $(\mathbf Z/3\mathbf Z)^i$.

We can actually say a lot more:
the extensions $\widetilde{H}_i/L_1$ are lifts of extensions $H_i'/F_0$.
We do not know any applications, but the proof shows
that the result is closely related to the $\tau$-structure of $M$.
\begin{proposition} For $i=1, 2, 3$, there are extensions
$H_i'/F_0$  with $[H_i':F_0]= 3^{i}$ such that
$H_i=F_1H_i'$.
\end{proposition}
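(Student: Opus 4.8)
The idea is to realize $H_i'$ as the fixed field of a lift of $\tau=\text{Gal}(F_1/F_0)$ to an automorphism of $H_i$ of order exactly $3$. The whole argument rests on the \emph{integral} relations of Theorem~\ref{keylemma}, so I would work inside $\widetilde H_i=L_1(\varepsilon_0^{1/3},\dots,\varepsilon_{i-1}^{1/3})$, where Kummer theory is available because $\zeta_3\in L_0\subseteq L_1$ (indeed $\sqrt{-d}\,\sqrt{3d}=d\sqrt{-3}$). Throughout, write $\delta=1$ in Case (i) of Theorem~\ref{keylemma} and $\delta=0$ in Case (ii), so that $\varepsilon_2^{1+\tau+\tau^2}=\varepsilon_0^{\delta}$ uniformly.

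First I would check that $H_i/F_0$ is Galois. Put $V_i=\langle\varepsilon_0,\dots,\varepsilon_{i-1}\rangle\subseteq M$. From $\varepsilon_0^{\tau}=\varepsilon_0$, $\varepsilon_2^{1-\tau}=\varepsilon_1$, and $\varepsilon_1^{1-\tau}\equiv\varepsilon_0^{\delta}$ modulo cubes (obtained by applying $1-\tau$ to the relation for $\varepsilon_2$), each $V_i$ is a $\langle\tau\rangle$-submodule of $M$; it is $\langle g\rangle$-stable since $g$ acts as $-1$ on $M$. Its image in $L_1^{\times}/(L_1^{\times})^3$ is still $i$-dimensional (the natural map $M\to L_1^{\times}/(L_1^{\times})^3$ is injective, Lemma~\ref{quadlemma}). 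Since $\text{Gal}(L_1/F_0)=\langle g\rangle\times\langle\tau\rangle$ stabilizes this Kummer subgroup and $\mu_3\subseteq L_1$, the extension $\widetilde H_i/F_0$ is Galois, of degree $2\cdot 3^{i+1}$. As recalled above, $\text{Gal}(\widetilde H_i/F_1)$ is abelian with a unique element $g$ of order $2$ and $H_i=\widetilde H_i^{\langle g\rangle}$; since $F_1/F_0$ is Galois, $\text{Gal}(\widetilde H_i/F_1)$ is normal in $\text{Gal}(\widetilde H_i/F_0)$, so its characteristic subgroup $\langle g\rangle$ is normal there too, and $H_i/F_0$ is Galois of degree $3^{i+1}$, with $\text{Gal}(H_i/F_1)\cong(\mathbf Z/3\mathbf Z)^i$ of index $3$.

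The heart of the matter is to lift $\tau$ to an automorphism $\tilde\tau\in\text{Gal}(\widetilde H_3/L_0)$ of order exactly $3$. Choose any lift with $\tilde\tau(\varepsilon_0^{1/3})=\varepsilon_0^{1/3}$ (possible since $\tau$ fixes $\varepsilon_0\in K_0$ and $\zeta_3\in L_0$). The relations of Theorem~\ref{keylemma} give the genuine unit identities $\tau(\varepsilon_2)=\varepsilon_2\varepsilon_1^{-1}$ and $\tau(\varepsilon_1)=\varepsilon_2^{3}\varepsilon_1^{-2}\varepsilon_0^{-\delta}$, together with $N_{K_1/K_0}(\varepsilon_1)=1$ (as $\varepsilon_1=\varepsilon_2^{1-\tau}$), hence $N_{K_1/K_0}(\varepsilon_2\varepsilon_1^{-1})=\varepsilon_0^{\delta}$. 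Because $\tilde\tau$ fixes $\zeta_3$ and restricts to a generator of $\text{Gal}(K_1/K_0)$, the product of $\varepsilon_2\varepsilon_1^{-1}$ with its two $\tilde\tau$-conjugates is $\varepsilon_0^{\delta}$; feeding this into a direct computation of $\tilde\tau^{3}$ on $\varepsilon_0^{1/3},\varepsilon_1^{1/3},\varepsilon_2^{1/3}$ makes all the cube-root correction factors cancel, and one finds $\tilde\tau^{3}=\mathrm{id}$. This is exactly where the integral relations are used: modulo cubes one only learns $\tilde\tau^{3}\in(1+\tau+\tau^2)\widetilde{\cal A}$. (In Case (i) one may instead argue softly: $(1-\tau)^2\varepsilon_2\not\equiv1$ forces $M$, hence $\widetilde{\cal A}=\text{Hom}(M,\mu_3)$, to be a cyclic $\mathbf F_3[\tau]$-module of length $3$, so $\widehat H^0(\langle\tau\rangle,\widetilde{\cal A})=0$ and an order-$3$ lift exists automatically; but in Case (ii) $\widehat H^0(\langle\tau\rangle,\widetilde{\cal A})$ is nonzero, so the explicit computation is essential.)

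Finally I would descend. The formula for $\tilde\tau(\varepsilon_j^{1/3})$ involves only $\varepsilon_{j'}^{1/3}$ with $j'\le j$, so $\tilde\tau$ stabilizes each $\widetilde H_i$, and $\tilde\tau|_{\widetilde H_i}$ has order $3$ since it induces $\tau\ne1$ on $L_1$. As $\tilde\tau$ fixes $F_0$ and $F_1/F_0$ is Galois, $\tilde\tau$ normalizes $\text{Gal}(\widetilde H_i/F_1)$, hence its characteristic order-$2$ subgroup $\langle g\rangle$, so $\tilde\tau$ commutes with $g$ and induces an order-$3$ automorphism $\bar\tau$ of $H_i=\widetilde H_i^{\langle g\rangle}$ restricting to $\tau$ on $F_1$. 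Put $H_i'=H_i^{\langle\bar\tau\rangle}$. Then $[H_i':F_0]=3^{i+1}/3=3^i$, and $\langle\bar\tau\rangle\cap\text{Gal}(H_i/F_1)=1$ (since $\bar\tau$ is nontrivial on $F_1$), so $H_i'F_1=H_i$, as wanted. The main obstacle is the third paragraph: abstractly the surjection $\text{Gal}(H_i/F_0)\to\text{Gal}(F_1/F_0)$ need not split — its obstruction sits in $\widehat H^0(\langle\tau\rangle,\text{Gal}(H_i/F_1))$, which is genuinely nonzero in some of the cases — so the splitting really does use the $\tau$-module structure of $M$ provided by Theorem~\ref{keylemma}, through the norm identity $N_{K_1/K_0}(\varepsilon_2\varepsilon_1^{-1})=\varepsilon_0^{\delta}$.
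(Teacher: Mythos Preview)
Your proposal is correct and follows essentially the same approach as the paper: both arguments lift $\tau$ to an order-$3$ automorphism of $\widetilde H_3$ using the \emph{integral} relations $\varepsilon_2^{1-\tau}=\varepsilon_1$ and $\varepsilon_2^{1+\tau+\tau^2}=\varepsilon_0^{\delta}$ from Theorem~\ref{keylemma}, then descend via the unique involution $g$ and take fixed fields. The only differences are in packaging: the paper isolates the lifting criterion into a separate lemma (an element of order $p$ exists over $N_1(\zeta_p,\varepsilon^{1/p})$ iff $\beta^{1+\tau+\cdots+\tau^{p-1}}=1$, where $\varepsilon^{\tau-1}=\beta^p$) and then climbs the tower $\widetilde H_1\subset\widetilde H_2\subset\widetilde H_3$ one step at a time, treating Cases (i) and (ii) separately; you instead work directly on $\widetilde H_3$, impose the single normalization $\tilde\tau(\varepsilon_0^{1/3})=\varepsilon_0^{1/3}$, and verify $\tilde\tau^{3}=\mathrm{id}$ by one uniform computation with the parameter $\delta$. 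Your observation that the arbitrary cube-root ambiguities in $\tilde\tau(\varepsilon_1^{1/3})$ and $\tilde\tau(\varepsilon_2^{1/3})$ cancel in $\tilde\tau^{3}$ (so that only the normalization on $\varepsilon_0^{1/3}$ matters) is a nice streamlining, and your cohomological remark that $\widehat H^{0}(\langle\tau\rangle,\widetilde{\cal A})$ is nonzero in Case~(ii) correctly pinpoints why the explicit norm identity $N_{K_1/K_0}(\varepsilon_2\varepsilon_1^{-1})=\varepsilon_0^{\delta}$ is genuinely needed rather than just module-theoretic considerations.
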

\begin{proof} 
We need the following result.
\begin{lemma}
Let $p$ be a prime and let $N_1/N_0$ be a Galois extension
of fields of characteristic not $p$. Assume that $\text{Gal}(N_1/N_0)$
contains an element $\tau$ of order $p$. 
Let $\varepsilon\in N_1^{\times}$ be such that $N_1(\zeta_p,\varepsilon^{1/p})/N_0$ 
is Galois.
Then $\varepsilon^{\tau-1}=\beta^p$ for some $\beta\in N_1$. Moreover,
$\tau$ has an extension to an element of 
$\text{Gal}(N_1(\zeta_p,\varepsilon^{1/p})/N_0)$
of order p if and only if  $\beta^{1+\tau+\dots+\tau^{p-1}}=1$.
\end{lemma}
\begin{proof} 
Take any extension $\widetilde{\tau}$ of $\tau$ to $N_1(\zeta_p,\varepsilon^{1/p})$
that is trivial on $N_0(\zeta_p)$. Note that such extensions exist since
$\tau$ must be trivial on $N_0(\zeta_p)\cap N_1$ because
its degree over $N_0$ is prime to $p$.

Let $N_0'\subseteq N_1$ be the fixed field of $\tau$.  Then 
$N_1(\zeta_p,\varepsilon^{1/p})/N_0'(\zeta_p)$
is Galois of order $p$ or $p^2$ and hence abelian.  Thus $\widetilde{\tau}$ 
commutes with a generator of
$\text{Gal}(N_1(\zeta_p,\varepsilon^{1/p})/N_1(\zeta_p))$, so it follows 
(by a straightforward calculation or by the Kummer pairing) that
$\varepsilon^{\tau-1}=\beta^p$ for some $\beta\in N_1$.

We have
$\widetilde{\tau}(\varepsilon^{1/p})=\varepsilon^{1/p}\beta\zeta$ 
for some (possibly trivial)
$p$th root of unity $\zeta$. 
An easy calculation shows that 
$$
\widetilde{\tau}^p(\varepsilon^{1/p})=\varepsilon^{1/p}
\beta^{1+\tau+\cdots +\tau^{p-1}}\zeta^p
=\beta^{1+\tau+\cdots +\tau^{p-1}}\varepsilon^{1/p}.
$$
Therefore,  if 
$\beta^{1+\tau+\cdots +\tau^{p-1}}=1$, then $\widetilde{\tau}$ has order $p$.
Conversely, if $\tau$ has an extension with order $p$, then this extension
is trivial on $N_0(\zeta_p)$. Therefore, the above calculation shows that
$\beta^{1+\tau+\cdots +\tau^{p-1}}=1$.
\end{proof}

In our situation, the lemma directly implies that $\tau\in\text{Gal}(L_1/L_0)$
yields an element of $\text{Gal}(\widetilde{H}_1/L_0)$ of order $3$. 
But we need to be more explicit. Since 
$\text{Gal}(\widetilde{H}_1/L_0(\varepsilon_0^{1/3}))$
restricts isomorphically to $\text{Gal}(L_1/L_0)$, we choose the element
that restricts to $\tau$ and continue to
call this new element $\tau$. The fixed field of $\tau$ is $L_0(\varepsilon_0^{1/3})$.

Now assume that we are in Case (i) of Theorem 1. 
Apply the lemma to the extension $\widetilde{H}_1/L_0$ with $\varepsilon=\varepsilon_1$.
We have $\varepsilon_1^{\tau-1}= (\varepsilon_0^{-1/3}\varepsilon_2^{\tau})^3$.
Since $\tau(\varepsilon_0^{1/3})=\varepsilon_0^{1/3}$, we have
$$
(\varepsilon_0^{-1/3}\varepsilon_2^{\tau})^{1+\tau+\tau^2}=
\varepsilon_0^{-1}\varepsilon_2^{\tau+\tau^2+1}=1.
$$
The lemma therefore yields an element $\tau\in \text{Gal}(\widetilde{H}_2/L_0)$ of order 3.

Finally, apply the lemma to $\widetilde{H}_2/L_0$ with $\varepsilon=\varepsilon_2$.
We have $\varepsilon_2^{\tau-1} = (\varepsilon_1^{-1/3})^3$.
Since 
$\tau(\varepsilon_1)=\varepsilon_1\varepsilon_0^{-1}\varepsilon_2^{3\tau}$,
we have
$\tau(\varepsilon_1^{1/3})=\zeta \varepsilon_1^{1/3}\varepsilon_0^{-1/3}
\varepsilon_2^{\tau}$ for some $3$rd root of unity $\zeta$. Therefore,
\begin{align*}
&(\varepsilon_1^{1/3})^{1+\tau+\tau^2}\\
&= \left(\varepsilon_1^{1/3}\right)\left(\zeta
\varepsilon_1^{1/3}\varepsilon_0^{-1/3}\varepsilon_2^{\tau}\right)\left(\zeta^2
\varepsilon_1^{1/3}\varepsilon_0^{-1/3}\varepsilon_2^{\tau}\varepsilon_0^{-1/3}
\varepsilon_2^{\tau^2}\right)\\
&=\zeta^3\varepsilon_1\varepsilon_0^{-1}\varepsilon_2^{2\tau+\tau^2}\\
&=\varepsilon_2^{(1-\tau)-(1+\tau+\tau^2)+(2\tau+\tau^2)}=1.
\end{align*}
We obtain $\tau\in \text{Gal}(\widetilde{H}_3/L_0)$ of order 3.

Now assume that we are in Case (ii) of Theorem 1. We have
$$
\varepsilon_1^{\tau-1}=\varepsilon_2^{-(1+\tau+\tau^2)}\varepsilon_2^{3\tau}
=\varepsilon_2^{3\tau},
$$
and $(\varepsilon_2^{\tau})^{1+\tau+\tau^2}=1$. The lemma yields
$\tau\in \text{Gal}(\widetilde{H}_2/L_0)$ of order 3.

Finally, apply the lemma to $\widetilde{H}_2/L_0$ and $\varepsilon_2$. We have
$\varepsilon_2^{\tau-1}=(\varepsilon_1^{1/3})^3$.  We have shown that $\varepsilon_1^{\tau}
=\varepsilon_1\varepsilon_2^{3\tau}$. 
Therefore, $(\varepsilon_1^{1/3})^{\tau}=\zeta\varepsilon_1^{1/3}\varepsilon_2^{\tau}$
for some 3rd root of unity $\zeta$. It follows that
\begin{align*}
(\varepsilon_1^{1/3})^{1+\tau+\tau^2}&=(\varepsilon_1^{1/3})(\zeta\varepsilon_1^{1/3}
\varepsilon_2^{\tau})(\zeta^2\varepsilon_1^{1/3}(\varepsilon_2)^{\tau+\tau^2})\\
&=\varepsilon_1\varepsilon_2^{2\tau+\tau^2}\\
&=\varepsilon_2^{1+\tau+\tau^2}=1.
\end{align*}

Therefore, in both cases, we obtain $\tau\in\text{Gal}(\widetilde{H}_3/L_0)$.
Since $g$ is the unique element of order 2 in the normal subgroup 
$\text{Gal}(\widetilde{H}_3/F_1)$
of $\text{Gal}(\widetilde{H}_3/F_0)$, we must have
$\tau g \tau^{-1} = g$, so $g$ and $\tau$ commute. Therefore,
$\tau g$ has order 6 in $\text{Gal}(\widetilde{H}_3/F_0)$. The fixed field of $\tau g$ is the desired field
$H_3'$. If we restrict $\tau g$ to $\widetilde{H}_2$, its fixed field 
yields $H_2'$ and the restriction to
$\widetilde{H}_1$ yields $H_1'$.
\end{proof}

\section{Kummer generators}

The goal of this section is to prove the following.

\begin{theorem}\label{mainthm} Let $0<d\not\equiv 0\pmod 3$. Let $\varepsilon_0$ be the fundamental 
unit of $\mathbf Q(\sqrt{3d})$. Suppose that
$3\nmid h^+=h(\mathbf Q(\sqrt{3d}))$. Let $A_1$ be the 3-Sylow subgroup of the ideal
class group of $F_1$.
\newline
(a) There are units $\varepsilon_1, \varepsilon_2\in K_1$ such that
$$
\varepsilon_2^{1-\tau}=\varepsilon_1, \quad \text{ and }\; \varepsilon_2^{1+\tau+\tau^2}=\varepsilon_0.
$$
\newline
(b) The 3-rank of $A_1$ is at most 3.
\newline
(c) The 3-rank of $A_1$ is at least 1 if and only if 
$L_1(\varepsilon_0^{1/3})/L_1$ is unramified.
\newline
(d) The 3-rank of $A_1$ is at least 2 if and only if
$L_1(\varepsilon_1^{1/3})/L_1$ is unramified. 
\newline
(e) The 3-rank of $A_1$ is 3 if and only if
$L_1(\varepsilon_2^{1/3})/L_1$ is unramified. 
\end{theorem}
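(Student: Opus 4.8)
The plan is to combine Theorem~\ref{keylemma} with Kummer theory over $L_1$, exploiting the $\tau$-module structure of $M$.

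\emph{Reductions and part (a).} Since $3\nmid h^{+}=h(K_0)$ the $3$-class group $A(K_0)$ is trivial, so $A(K_0)\to A(K_1)$ is trivially injective; by Proposition~\ref{nontrivprop} and Theorem~\ref{keylemma} we are therefore in Case (i), and Case (i)(d) is exactly part (a). Moreover cubes vanish in $M$, so $1+\tau+\tau^{2}$ and $(1-\tau)^{2}$ act identically on $M$; together with $\varepsilon_2^{1-\tau}=\varepsilon_1$ and $\varepsilon_0\in M[1-\tau]$ this makes $M$ a cyclic $\mathbf F_3[\tau]$-module generated by $\varepsilon_2$, isomorphic to $\mathbf F_3[t]/(t^{3})$ with $t=1-\tau$, so that $\varepsilon_1\equiv\varepsilon_2^{1-\tau}$ and $\varepsilon_0\equiv\varepsilon_2^{(1-\tau)^{2}}$. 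Its only $\tau$-stable subspaces are the nested chain $0\subsetneq\langle\varepsilon_0\rangle\subsetneq\langle\varepsilon_0,\varepsilon_1\rangle\subsetneq M$ (these are $M^{2}\subsetneq M^{1}\subsetneq M^{0}$). Finally, since $3$ is totally ramified in the $\mathbf Z_3$-extension of $K_0$, is the only ramified prime, and $A(K_0)=0$, Nakayama's lemma applied to $\varprojlim A(K_n)$ gives $3\nmid h(K_n)$ for all $n$; hence by Lemma~\ref{scholzlemma} the $3$-rank of $A_1$ equals that of $\widetilde A_1$.

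\emph{Kummer theory and the key inclusion.} Let $W\subseteq L_1^{\times}/(L_1^{\times})^{3}$ be the group of classes $\alpha$ for which $L_1(\alpha^{1/3})/L_1$ is unramified. Since $\mu_3\subset L_1$, the field $L_1(W^{1/3})$ is the maximal unramified abelian exponent-$3$ extension of $L_1$, so by class field theory $\dim_{\mathbf F_3}W$ equals the $3$-rank of $\widetilde A_1$, hence of $A_1$. Running the computation of Lemma~\ref{scholzlemma} under $3\nmid h^{+}$ shows $g$ acts trivially and $\sigma$ by inversion on $\widetilde A_1$; since $g$ and $\sigma$ each invert $\mu_3$, the Kummer pairing forces $g$ to invert and $\sigma$ to fix $W$. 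By Lemma~\ref{quadlemma}(a) this puts $W\subseteq (K_1^{\times}/(K_1^{\times})^{3})^{g=-1}$. Now if $\alpha\in K_1^{\times}$ and $L_1(\alpha^{1/3})/L_1$ is unramified, then $v_{\mathfrak P}(\alpha)\equiv 0\pmod 3$ for every prime $\mathfrak P$ of $L_1$; because $3\nmid d$, the prime above $3$ is unramified in $L_1/K_1$, so $v_{\mathfrak q}(\alpha)\equiv 0\pmod 3$ for all primes $\mathfrak q$ of $K_1$, i.e.\ $(\alpha)_{K_1}=\mathfrak d^{3}$. Since $3\nmid h(K_1)$ the ideal $\mathfrak d$ is principal, so $\alpha$ is a unit times a cube; this unit has $g$-eigenvalue $-1$, hence lies in $M=(E/E^{3})^{1-g}$. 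Therefore $W\subseteq M$.

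\emph{Conclusion.} The group $W$ is stable under $\mathrm{Gal}(L_1/\mathbf Q)$ (an isomorphism carrying $\alpha^{1/3}$ to a cube root of a conjugate preserves unramifiedness), hence $\tau$-stable, so $W$ is one of the four submodules of $M$ above; in particular $\dim W\le 3$, which is (b). Since $\{\varepsilon_0,\varepsilon_1,\varepsilon_2\}$ is a basis of $M$ and $M$ injects into $L_1^{\times}/(L_1^{\times})^{3}$ (a unit that is a cube in $L_1^{\times}$ is a cube of a unit, its ideal being $3$-torsion hence trivial), each $\varepsilon_{i-1}$ is nonzero and lies in the $i$-th step of the chain but not the $(i-1)$-st. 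Hence for $i=1,2$ the $3$-rank of $A_1$ is $\ge i$ iff $\varepsilon_{i-1}\in W$ iff $L_1(\varepsilon_{i-1}^{1/3})/L_1$ is unramified, giving (c) and (d); and the $3$-rank is $3$ iff $W=M$ iff $\varepsilon_2\in W$ iff $L_1(\varepsilon_2^{1/3})/L_1$ is unramified, giving (e). The main obstacle is the inclusion $W\subseteq M$: it requires simultaneously the eigenvalue bookkeeping for $g$ and $\sigma$, the input $3\nmid h(K_1)$, and the observation that $3$ is unramified in $L_1/K_1$ so that the local condition at the prime above $3$ behaves like the conditions away from $3$.
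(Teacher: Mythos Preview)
Your proof is correct and follows essentially the same route as the paper's: identify the Kummer group $W$ (the paper calls it $V$) for the maximal unramified exponent-$3$ extension of $L_1$, use the Galois equivariance of the Kummer pairing together with $3\nmid h(K_1)$ to force $W\subseteq M$, and then invoke the cyclic $\mathbf F_3[\tau]$-module structure of $M$ from Theorem~\ref{keylemma}. The only cosmetic differences are that you pass through valuations where the paper passes through $\sigma$-invariant ideals, and your parenthetical justification that $M\hookrightarrow L_1^{\times}/(L_1^{\times})^{3}$ stops at ``$\gamma$ is a unit of $L_1$'' --- one more appeal to Lemma~\ref{quadlemma}(b) is needed to descend the cube root to $K_1$, but this is already implicit in your setup.
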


\begin{proof}
Let $H^{(3)}$ be the maximal unramified elementary abelian 3-extension of $L_1$. 
Then Gal$(H^{(3)}/L_1)\simeq \widetilde{A}_1/\widetilde{A}_1^3$.
Since $\zeta_3\in L_1$, we have
$$
H^{(3)}=L_1(V^{1/3}), \quad \text{for some subgroup} \quad V\subset 
L_1^{\times}/(L_1^{\times})^3.
$$
The Kummer pairing
$$
V\times (\widetilde{A}_1/\widetilde{A}_1^3)\longrightarrow \mu_3,
$$
where $\mu_3$ is the group of cube roots of unity, is Galois equivariant.
Since $3\nmid h(K_1)$, we see that $\sigma$ acts on $\widetilde{A}_1/\widetilde{A}_1^3$ 
as $-1$. Also, $\sigma$
acts on $\mu_3$ as $-1$. Therefore, $\sigma$ acts on $V$ as $+1$.
By Lemma~\ref{quadlemma}, we may assume that $V\subset K_1^{\times}/(K_1^{\times})^3$.

By assumption, 3 is unramified in $F_0/\mathbf Q$. Since the extension
$L_1/K_1$ is the lift of the extension $F_0/\mathbf Q$, it is unramified at 3.
Since it is the lift of $\mathbf Q(\sqrt{-3})/\mathbf Q$, it ramifies at most
at 3 and the archimedean primes. 
Therefore, $L_1/K_1$ is unramified at all finite primes. Let $b\in V$.
We also denote the corresponding element of $K_1^{\times}$ by $b$. Therefore,
$\sigma(b)=b$.  Since
$H^{(3)}/L_1$ is unramified, we must have $(b)=I^3$ for some ideal $I$ of $L_1$.
Since $\sigma(I)=I$, and since $L_1/K_1$ is unramified at all finite primes, 
$I$ comes from an ideal
of $K_1$, so $(b)=I^3$ as ideals of $K_1$. Since
$3\nmid h(K_1)$ (because $3\nmid h^+$, and there is exactly one 
ramified prime and it is totally
ramified), the ideal $I$ is principal, so $b=\varepsilon \alpha^3$ for some
$\alpha\in K_1$ and some unit $\varepsilon$ of $K_1$. We have shown that
$V$ is represented by units of $K_1$.

Since the 3-parts of the class groups of $L_1$ and $F_1$ are isomorphic,
$g$ acts as $+1$ on 
$\widetilde{A}_1/\widetilde{A}_1^3$,
so $V\subseteq M=(E/E^3)^{1-g}$.

We are assuming that $3\nmid h^+$, so part (i)(b) of Theorem~\ref{keylemma} holds.
Therefore, $\varepsilon_0, \varepsilon_1, \varepsilon_2$ exist and
$M\simeq \mathbf F_3[T]/(T^3)$, where $1-\tau\leftrightarrow T$. The only subspaces
stable under the action of $T$ are $0$, $(T^2)$, $(T)$, and the whole space.
This means that $V$ is one of $0$, $\langle \varepsilon_0\rangle$, $\langle \varepsilon_0,
\varepsilon_1\rangle$, or $\langle \varepsilon_0, \varepsilon_1, \varepsilon_2\rangle$.
Parts (b), (c), (d), (e) of the theorem follow immediately.
\end{proof}

It is interesting to note that although $\text{Gal}(H_3/F_1)\simeq (\mathbf Z/3\mathbf Z)^3$,
which is very much non-cyclic,
the $\tau$-structure forces it to behave like a cyclic extension of degree 27
in that the inertia subgroup is restricted to be one of four different distinguished
subgroups of orders 1, 3, 9, 27. Namely, there are subgroups
$$
1 \subset (1-\tau)^2{\cal A}\subset (1-\tau) {\cal A}\subset {\cal A},
$$
and the inertia group and the decomposition group
must be among these. Correspondingly, there is a tower of fields 
$$
H_3\supset H_2\supset H_1\supset F_1.
$$
The splitting of primes above 3, if it occurs, happens at the bottom of this tower
and the ramification (possibly trivial) happens at the top.

\section{Kummer generators in the split case}

Let $r$ be the 3-rank of the class group of $K_0$. By Scholz's theorem,
the  3-rank of $A_0$ is $r$ or $r+1$, so $\lambda\ge r$ by Lemma~\ref{ranklemma}.
In fact, we can do better.

\begin{theorem}\label{unramthm} Let $d\equiv 2\pmod 3$ and 
let $\varepsilon_0$ be the fundamental
unit of $K_0=\mathbf Q(\sqrt{3d})$. Let $r$ be the 3-rank of the class group
of $K_0$ and let $A_1$ be the 3-part of the class group of $F_1$.
Then $\text{rank}(A_1)\ge r+1$.
Let $I_1, \dots, I_r$ represent independent ideal classes of order 3 in $K_0$,
and write $I_i^3=(\gamma_i)$ with $\gamma_i\in K_0$. 
Let $L_1=\mathbf Q(\sqrt{-d},\sqrt{3d}, \zeta_9)$.
Then 
$$
L_1(\varepsilon_0^{1/3}, \gamma_1^{1/3}, \dots, \gamma_r^{1/3})/L_1
$$ 
is an everywhere
unramified extension of degree $3^{r+1}$. 
\end{theorem}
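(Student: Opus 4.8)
The plan is to show that $N:=L_1(\varepsilon_0^{1/3},\gamma_1^{1/3},\dots,\gamma_r^{1/3})$ has degree $3^{r+1}$ over $L_1$, that $N/L_1$ is unramified at every finite prime, and then to read off the rank statement. Since $3$ splits in $F_0$, the extension $L_1/K_1$ is unramified at all finite primes (it is a lift of both $F_0/\mathbf Q$ and $\mathbf Q(\sqrt{-3})/\mathbf Q$), as in the proof of Theorem~\ref{mainthm}. For the degree I would first check that $\varepsilon_0,\gamma_1,\dots,\gamma_r$ are independent in $K_0^\times/(K_0^\times)^3$: a relation $\varepsilon_0^{a_0}\prod_i\gamma_i^{a_i}=\beta^3$ with $\beta\in K_0^\times$ forces $\prod_i I_i^{a_i}$ to be principal, hence $a_i\equiv 0\pmod 3$ for all $i$ (the classes $[I_i]$ are independent of order $3$), and then $\varepsilon_0^{a_0}$ is a cube, so $a_0\equiv 0\pmod 3$ because $\varepsilon_0$ is a fundamental unit. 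Lemma~\ref{cubelemma} and Lemma~\ref{quadlemma}(a) then propagate this independence to $L_1^\times/(L_1^\times)^3$, so $[N:L_1]=3^{r+1}$.

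For unramifiedness it suffices to treat each generator of $N$ over $L_1$ separately, because for a prime $\mathfrak q$ of $L_1$ the elements $a$ for which $L_{1,\mathfrak q}(a^{1/3})/L_{1,\mathfrak q}$ is unramified form a subgroup of $L_{1,\mathfrak q}^\times$ (a subextension of an unramified extension is unramified). Outside $3$ this is immediate: $\varepsilon_0$ is a global unit, and $(\gamma_i)=I_i^3$ makes every valuation of $\gamma_i$ away from $3$ divisible by $3$. For the primes above $3$ I would first record that, since $d\equiv 2\pmod 3$, the element $-d$ is a square in $\mathbf Z_3$, so $\mathbf Q_3\otimes K_0=\mathbf Q_3(\sqrt{3d})=\mathbf Q_3(\sqrt{-3})=\mathbf Q_3(\zeta_3)$; hence the prime of $K_0$ above $3$ splits in $L_0=K_0(\zeta_3)$, and the completion of $K_1$ (and of $L_1$) at a prime above $3$ is $\mathbf Q_3(\zeta_9)$, with $L_1/K_1$ split there. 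Now $N$ does not change if a $\gamma_i$ is multiplied by a unit of $K_0$ (these are $\pm\varepsilon_0^{\mathbf Z}$, and $\varepsilon_0^{1/3}\in N$), so I may choose $\gamma_i$ so that $L_0(\gamma_i^{1/3})$ is the lift to $L_0$ of the unramified cyclic cubic extension $M_i/K_0$ attached to $[I_i]$. Since $M_i/K_0$ and $L_0/K_0$ are both unramified at all finite primes, so is $M_i(\zeta_3)=L_0(\gamma_i^{1/3})$ over $L_0$; and as $L_1/L_0$ is unramified away from $3$ and totally ramified at $3$, the extension $L_1(\gamma_i^{1/3})/L_1$ is unramified at every finite prime.

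The main obstacle is the behaviour of $\varepsilon_0^{1/3}$ at the prime above $3$. Here I would use that $3\mid D$, so $\mathrm{Norm}_{K_0/\mathbf Q}\varepsilon_0=1$ and hence $\varepsilon_0^\rho=\varepsilon_0^{-1}$ for $\rho$ the nontrivial automorphism of $K_0/\mathbf Q$; under $\mathbf Q_3\otimes K_0=\mathbf Q_3(\zeta_3)$ this $\rho$ becomes complex conjugation $c$, so $c(\varepsilon_0)=\varepsilon_0^{-1}$ in $\mathbf Q_3(\zeta_3)$. The key claim is that $\mathbf Q_3(\zeta_3)(\varepsilon_0^{1/3})/\mathbf Q_3$ is \emph{abelian}: it is Galois of degree dividing $6$, and $c$ lifts to an automorphism of order $2$ sending $\varepsilon_0^{1/3}$ to $(\varepsilon_0^{1/3})^{-1}$, which one checks commutes with the order-$3$ automorphism fixing $\mathbf Q_3(\zeta_3)$. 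Granting this, I would compose with $\mathbf Q_3(\zeta_9)$: if $\varepsilon_0^{1/3}\in\mathbf Q_3(\zeta_9)$ the extension of $L_1$ splits at $3$; otherwise $\mathbf Q_3(\zeta_9)(\varepsilon_0^{1/3})/\mathbf Q_3$ is abelian, its maximal $3$-subextension is the unique $(\mathbf Z/3\mathbf Z)^2$-extension of $\mathbf Q_3$ (since the cubic subfield over $\mathbf Q_3$ of $\mathbf Q_3(\zeta_3)(\varepsilon_0^{1/3})$ differs from $\mathbf B_1\otimes\mathbf Q_3$), and this $(\mathbf Z/3\mathbf Z)^2$-extension contains the unramified cubic extension of $\mathbf Q_3$; hence $\mathbf Q_3(\zeta_9)(\varepsilon_0^{1/3})$ has residue degree at least $3$ and ramification index exactly $6$ over $\mathbf Q_3$, so it is unramified over $\mathbf Q_3(\zeta_9)$. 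Thus $L_1(\varepsilon_0^{1/3})/L_1$ is unramified at $3$, and $N/L_1$ is everywhere unramified of degree $3^{r+1}$. I expect this local analysis over $\mathbf Q_3(\zeta_9)$, and in particular the abelianness claim, to be the technical heart of the argument; the $\gamma_i$ part is comparatively routine once one passes to $L_0$.

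Finally, for the rank statement, $\sigma$ fixes all the chosen Kummer generators (they lie in $K_0$) and acts by $-1$ on $\mu_3$, so it acts by $-1$ on $\mathrm{Gal}(N/L_1)\simeq(\mathbf Z/3\mathbf Z)^{r+1}$. Since $N$ lies in the $3$-Hilbert class field of $L_1$, the surjection $\widetilde A_1\twoheadrightarrow\mathrm{Gal}(N/L_1)$ restricts to a surjection $\widetilde A_1^-\twoheadrightarrow(\mathbf Z/3\mathbf Z)^{r+1}$, and $A_1\simeq\widetilde A_1^-$ by Lemma~\ref{scholzlemma}, so $\mathrm{rank}(A_1)\ge r+1$.
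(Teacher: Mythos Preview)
Your argument for $\varepsilon_0$ at the prime above $3$ is correct and is a genuinely different route from the paper's: the paper does a direct congruence computation in $\mathbf Z_3[\zeta_9]$ to show that any $\gamma\in K_0^\times$ with $(\gamma)=I^3$ is a cube modulo $\pi^9$, while you use the abelianness of $\mathbf Q_3(\zeta_3,\varepsilon_0^{1/3})/\mathbf Q_3$ (coming from $N_{K_0/\mathbf Q}(\varepsilon_0)=1$) together with local class field theory to force the unramified cubic of $\mathbf Q_3$ into the picture. The independence and rank steps match the paper.

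The gap is in your treatment of the $\gamma_i$ at~$3$. You claim that, after multiplying by a unit of $K_0$, one has $L_0(\gamma_i^{1/3})=M_iL_0$ for an unramified cyclic cubic $M_i/K_0$. This cannot hold. If $M/K_0$ is any cyclic cubic, then $\mathrm{Gal}(ML_0/K_0)\cong\mathbf Z/3\times\langle\sigma\rangle$, so $\sigma$ acts trivially on $\mathrm{Gal}(ML_0/L_0)$; since $\sigma$ inverts $\mu_3$, Galois-equivariance of the Kummer pairing forces any Kummer generator $w$ of $ML_0/L_0$ to satisfy $\sigma(w)\equiv w^{-1}$ modulo cubes. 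But $\gamma_i\in K_0$ is \emph{fixed} by $\sigma$, so $\gamma_i\equiv\gamma_i^{-1}$ would force $\gamma_i$ to be a cube. Thus $L_0(\gamma_i^{1/3})$ is never the lift of an extension of $K_0$, and your ``comparatively routine'' step fails. (Extensions $L_0(\gamma_i^{1/3})/L_0$ with $\gamma_i\in K_0$ correspond, via the reflection principle, to unramified extensions of $F_0$, not of $K_0$; but their unramifiedness at $3$ is exactly what is at stake, not something one can assume.)

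The natural repair is to run your $\varepsilon_0$ argument for the $\gamma_i$ as well: since $(\gamma_i)=I_i^3$ gives $N_{K_0/\mathbf Q}(\gamma_i)=\pm N(I_i)^3$, one has $c(\gamma_i)=(\pm m)^3\gamma_i^{-1}$ locally, and the lift $\tilde c(\gamma_i^{1/3})=(\pm m)\gamma_i^{-1/3}$ has order $2$ and commutes with the order-$3$ generator, so $\mathbf Q_3(\zeta_3,\gamma_i^{1/3})/\mathbf Q_3$ is again abelian and the rest of your local analysis goes through verbatim. This is really the conceptual content of the paper's computation: the key input in both approaches is that $N_{K_0/\mathbf Q}(\gamma)$ is a cube up to sign, which the paper uses via the congruence $\ell^2\equiv k\pmod 3$ and you would use via abelianness.
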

\begin{proof}
Let $\pi=1-\zeta_9$. An extension $L_1(\alpha^{1/3})/L_1$, with $\alpha\in L_1^{\times}$, 
is everywhere
unramified if and only if $(\alpha)$ is the cube of an ideal and
$\alpha$ is congruent to a cube mod $\pi^9$ (\cite[Exercise 9.3]{washington}).
Therefore, we need to show that $\varepsilon_0$ and $\gamma_i$
are cubes mod $\pi^9$.

We need a few preliminary calculations.
We have $\zeta_3=(1-\pi)^3=1-3\pi+3\pi^2-\pi^3$.
It follows that $\sqrt{-3}=1+2\zeta_3=3-6\pi+6\pi^2-2\pi^3$. Squaring yields
$$-3\equiv 4\pi^6-12\pi^3 \equiv \pi^6-3\pi^3 \pmod{\pi^{10}}.$$
Substituting this expression for the $-3$ in the right-hand side yields
$$
-3\equiv \pi^6+\pi^9\pmod{\pi^{10}}.
$$
Substituting this into the expression for $\sqrt{-3}$ yields
$$
\sqrt{-3}\equiv \pi^3-\pi^6-\pi^7+\pi^8\pmod{\pi^{10}}.
$$
Also,
$$
\zeta_3\equiv 1-\pi^3+\pi^7-\pi^8\pmod{\pi^{10}}.
$$
Let $a, b, c, \dots$ be integers. Using the above congruences, we obtain
$$
\left(1+a\pi+b\pi^2+c\pi^3+\cdots\right)^3\equiv 1+a\pi^3+b\pi^6-a\pi^7-
(a^2+b)\pi^8\pmod{\pi^9}.
$$
This is the general form of a cube that is congruent to 1 mod $\pi$.

Let $\gamma\in K_0^{\times}$ be such that $(\gamma)=I^3$ for some ideal $I$ of $K_0$.
By multiplying $I$ by a principal ideal, if necessary, we may assume that $I$
is prime to 3. Therefore, by multiplying $\gamma$ by a cube, we may assume 
that $\gamma$ is prime to 3.
Write $\gamma^2=u+v\sqrt{3d}$. It is easy to see that $u, v$ do not
have 3 in their denominators, so they may be regarded as elements of $\mathbf Z_3$.
Since we are assuming that $3$ splits in $F_0=\mathbf
Q(\sqrt{-d})$, we can regard $v\sqrt{-d}$ as an element of $\mathbf Z_3$
under an embedding $F_0\hookrightarrow \mathbf Q_3$. Hence  $v\sqrt{-d}$ is congruent
mod 3 to an integer $\ell$, and we can
regard $\gamma$ as an element of $\mathbf Z_3[\zeta_3]$. Moreover,
$\gamma^2\equiv 1\pmod{\sqrt{-3}}$. Therefore, $u=1+3k$ for some $k$, and
\begin{align*}
\gamma^2&\equiv 1+3k+\ell\sqrt{-3} \pmod{\pi^9\mathbf Z_3[\zeta_9]}\\
&\equiv 1+\ell\pi^3-(\ell+k)\pi^6-\ell\pi^7+\ell\pi^8.
\end{align*}
Since $(\gamma)=I^3$, we have $\text{Norm}(\gamma)=\pm \text{Norm}(I)^3$,
and therefore $\text{Norm}(\gamma)^2$ is a 6-th power, hence is congruent
to 1 mod 9. Therefore,
$$
1 \equiv \text{Norm}(\gamma)^2 = u^2-3dv^2\equiv 1+6k+3\ell^2\pmod{9\mathbf Z_3[\zeta_9]}.
$$
It follows that $\ell^2\equiv k\pmod 3$
and
$$
(1+\ell\pi-(\ell+k)\pi^2)^3\equiv \gamma^2\pmod{\pi^9\mathbf Z_3[\zeta_9]},
$$
so $\gamma^2$ is a cube mod $\pi^9$.

Since this calculation is valid for the completions at each of the two
primes above 3, we have proved that 
$\gamma^2$ is globally a cube mod $\pi^9$. This implies
that $L_1(\gamma^{1/3})/L_1$ is everywhere unramified.

Now suppose that 
$\varepsilon_0^{a_0}\gamma_1^{a_1}\cdots \gamma_r^{a_r}=\beta^3$
for some $\beta\in L_1$. By Lemmas~\ref{quadlemma} and \ref{cubelemma},
we may assume that $\beta\in K_0$.  Therefore,
$$
I_1^{a_1}\cdots I_r^{a_r}=(\beta)
$$
in $K_0$, which implies that $a_i\equiv 0\pmod 3$ for all $i$. Consequently,
$\varepsilon_0^{a_0}$ is a cube in $K_0$, so $a_0\equiv 0\pmod 3$.
Therefore, $\varepsilon_0, \gamma_1, \dots, \gamma_r$ are independent mod cubes
in $L_1$, so 
$$
L_1(\varepsilon_0^{1/3}, \gamma_1^{1/3}, \dots, \gamma_r^{1/3})/L_1
$$
is unramified of degree $3^{r+1}$. Since $\sigma$ acts trivially on these Kummer
generators, the Galois equivariance of the Kummer pairing implies that
$\sigma$ acts by inversion on the Galois group of this extension. Therefore,
$\text{rank}(A_1)=\text{rank}(\widetilde{A}_1^-)\ge r+1$.
\end{proof}

\begin{corollary} Let $d\equiv 2\pmod 3$ and let 
$r$ be the 3-rank of the class group of $K_0$. Then $\lambda\ge r+1$.
\end{corollary}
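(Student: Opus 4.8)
The plan is to read the Corollary off from Theorem~\ref{unramthm} together with Lemma~\ref{ranklemma}, so essentially no new argument is needed; the Corollary is just the statement ``in fact, we can do better'' (replacing the bound $\lambda\ge r$ coming from $A_0$ and Scholz) made precise at level~$1$ of the tower. First I would invoke Theorem~\ref{unramthm}: under the hypothesis $d\equiv 2\pmod 3$ (with $d>0$ as throughout), it exhibits an everywhere unramified extension of $L_1$ of degree $3^{r+1}$ on which $\sigma$ acts by inversion, and hence concludes $\text{rank}(A_1)=\text{rank}(\widetilde A_1^-)\ge r+1$. Then I would apply Lemma~\ref{ranklemma} with $n=1$, which asserts unconditionally that the $3$-rank of $A_n$ is at most $\lambda$ for every $n$. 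Chaining the two inequalities gives $\lambda\ge\text{rank}(A_1)\ge r+1$, as desired.

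The only thing to check is bookkeeping: that the hypotheses of Theorem~\ref{unramthm} are precisely those of the Corollary, and that Lemma~\ref{ranklemma} costs nothing extra. For the former, Theorem~\ref{unramthm} requires only $d\equiv 2\pmod 3$; the splitting of $3$ in $F_0=\mathbf Q(\sqrt{-d})$ used inside its proof is automatic, since $-d\equiv 1\pmod 3$ is a nonzero square mod~$3$ and $3\nmid d$. For the latter, Lemma~\ref{ranklemma} is proved from the surjectivity of the norm maps in the totally ramified $\mathbf Z_3$-extension $F_\infty/F_1$ together with the structure result $X=\varprojlim A_m\simeq\mathbf Z_3^{\lambda}$ from \cite[Corollary 13.29]{washington}, with no further assumptions on $d$ or on the splitting of~$3$.

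There is no real obstacle here: all of the substance sits in Theorem~\ref{unramthm}, whose proof constructs the candidate Kummer generators $\varepsilon_0,\gamma_1,\dots,\gamma_r$ and verifies that each is a cube modulo $\pi^9$ (with $\pi=1-\zeta_9$), which is exactly the local condition for $L_1(\,\cdot^{1/3})/L_1$ to be unramified. Once that theorem and Lemma~\ref{ranklemma} are granted, the Corollary follows in one line.
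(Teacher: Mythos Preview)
Your proposal is correct and matches the paper's own proof exactly: the paper simply says ``This follows immediately from Theorem~\ref{unramthm} and Lemma~\ref{ranklemma}.'' Your additional bookkeeping remarks (that $d\equiv 2\pmod 3$ forces $3$ to split in $F_0$, and that Lemma~\ref{ranklemma} needs no extra hypotheses) are accurate and even slightly more explicit than the paper.
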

\begin{proof} This follows immediately from Theorem~\ref{unramthm} 
and Lemma~\ref{ranklemma}. \end{proof}

In particular, the corollary implies that if $3\mid h^+$ then $\lambda\ge 2$,
as was shown in Proposition~\ref{padicLprop} using $3$-adic $L$-functions.

When $3$ splits in $F_0$, we know that $\lambda\ge 1$. Correspondingly, in
this case, Theorem~\ref{unramthm} shows that we can always produce an explicit unramified
3-extension of $L_1$ using $\varepsilon_0^{1/3}$. 
This gives one explanation of the fact that $\lambda\ge 1$.
Note that we obtain an unramified extension of $L_1$ but not necessarily of $L_0$.
The latter case happens when $\varepsilon_0\equiv\pm 1 \pmod {3\sqrt{D}}$.

\section{Congruences}

We continue to restrict to the case that $d\equiv 2\pmod 3$, so $3$ splits 
in $F_0$. We complete
everything at one of the two primes of $L_1$ 
above 3. The completions of $K_0$ and $L_0$ 
yield $\mathbf Q_3(\zeta_3)$, and the completions of $K_1$ and $L_1$ yield
$\mathbf Q_3(\zeta_9)$. The element $\tau$ can be taken to be $\sigma_4\in
(\mathbf Z/9\mathbf Z)^{\times}\simeq \text{Gal}(\mathbf
Q_3(\zeta_9)/\mathbf Q_3)$ and $g$ becomes $\sigma_{-1}$, where
$\sigma_i(\zeta_9)=\zeta_9^i$. Recall that $\pi=1-\zeta_9$. 

\begin{lemma}\label{pi11lemma} Let $\varepsilon$ be a $\pi$-adic unit in $\mathbf Q_3(\zeta_9)$ such that 
$\sigma_{-1}(\varepsilon)= \varepsilon^{-1}$
times a cube. Then 
$$\varepsilon\equiv \pm\zeta_9^a \pmod {\pi^{5}}$$
for some integer $a$, and
$$
\varepsilon^3\equiv \pm\zeta_3^a \pmod {\pi^{11}}.
$$
\end{lemma}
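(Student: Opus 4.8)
The plan is to localize: the statement concerns only the local field $\mathbf{Q}_3(\zeta_9)$, whose ring of integers $\mathcal{O}$ has maximal ideal $(\pi)$, residue field $\mathbf{F}_3$, and ramification index $6$, so $v(3)=6$. Write $U=\mathcal{O}^{\times}$ and $U^{(i)}=1+\pi^i\mathcal{O}$ for $i\ge 1$. The idea is to climb the filtration $U\supset U^{(1)}\supset U^{(2)}\supset\cdots$ one step at a time: at levels $1$ and $3$ we clear the leading term by multiplying $\varepsilon$ by a power of $\zeta_9$ (using $\zeta_9=1-\pi$ and $\zeta_3=\zeta_9^3$), while at the even levels $2$ and $4$ the eigenspace hypothesis itself forces the leading term to vanish. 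Since the second assertion of the lemma follows from the first by cubing, the real work is to prove $\varepsilon\equiv\pm\zeta_9^a\pmod{\pi^5}$.

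First I would set $S=\{\varepsilon\in U:\sigma_{-1}(\varepsilon)\varepsilon\in U^3\}$; this is a subgroup of $U$ containing $-1$, $\zeta_9$ (because $\sigma_{-1}(\zeta_9)=\zeta_9^{-1}$), and all of $U^3$, so at each stage we may multiply $\varepsilon$ by $\pm1$, by a power of $\zeta_9$, or by a cube without leaving $S$; it therefore suffices to show every $\varepsilon\in S$ lands in $U^{(5)}$ after such a multiplication. Then I would record two facts. (i) Since $\sigma_{-1}(\pi)=1-\zeta_9^{-1}=-\zeta_9^{-1}\pi\equiv-\pi\pmod{\pi^2}$ and $\sigma_{-1}$ is trivial on $\mathbf{F}_3$, the involution $\sigma_{-1}$ acts on each graded piece $U^{(i)}/U^{(i+1)}\cong\mathbf{F}_3$ by $(-1)^i$. (ii) From the cube expansion in the proof of Theorem~\ref{unramthm}, $w^3\equiv1+a\pi^3\pmod{\pi^6}$ for $w=1+a\pi+\cdots\in U^{(1)}$ (there is no $\pi^4$- or $\pi^5$-term, since $v(3)=6$); combined with the fact that a cube lying in $U^{(1)}$ is necessarily $w^3$ with $w\in U^{(1)}$ (as $-1\notin U^{(1)}$), this yields $U^3\cap U^{(2)}\subseteq U^{(3)}$, and furthermore that a cube in $U^{(4)}$ must have $a=0$, hence $U^3\cap U^{(4)}\subseteq U^{(5)}$.

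Now the induction on $\varepsilon\in S$. Multiplying by $\pm1$, assume $\varepsilon\in U^{(1)}$. Writing $\varepsilon\equiv1+c_1\pi\pmod{\pi^2}$ and multiplying by $\zeta_9^{c_1}$ (which is $\equiv1-c_1\pi\pmod{\pi^2}$), assume $\varepsilon\in U^{(2)}$. At level $2$: if $\varepsilon\equiv1+c_2\pi^2\pmod{\pi^3}$, then by (i) $\sigma_{-1}(\varepsilon)\varepsilon\equiv1-c_2\pi^2\pmod{\pi^3}$, and this lies in $U^3\cap U^{(2)}\subseteq U^{(3)}$ by (ii), forcing $c_2=0$; so $\varepsilon\in U^{(3)}$. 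Writing $\varepsilon\equiv1+c_3\pi^3\pmod{\pi^4}$ and multiplying by $\zeta_3^{c_3}=\zeta_9^{3c_3}$ (which is $\equiv1-c_3\pi^3\pmod{\pi^4}$ since $\zeta_3\equiv1-\pi^3\pmod{\pi^7}$), assume $\varepsilon\in U^{(4)}$. At level $4$: the same argument as at level $2$, now using $U^3\cap U^{(4)}\subseteq U^{(5)}$, forces the $\pi^4$-coefficient to vanish, so $\varepsilon\in U^{(5)}$. Unwinding, the original $\varepsilon$ satisfies $\varepsilon\equiv\pm\zeta_9^a\pmod{\pi^5}$ for a suitable sign and integer $a$. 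Finally, write $\varepsilon=\pm\zeta_9^a\beta$ with $\beta\in U^{(5)}$; since $(1+\pi^5u)^3=1+3\pi^5u+3\pi^{10}u^2+\pi^{15}u^3$ and $v(3)=6$, we get $\beta^3\equiv1\pmod{\pi^{11}}$, whence $\varepsilon^3=(\pm1)^3\zeta_9^{3a}\beta^3\equiv\pm\zeta_3^a\pmod{\pi^{11}}$.

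The only real obstacle is bookkeeping the $3$-adic valuations precisely in fact (ii), in particular seeing that the cube of a principal unit has no $\pi^4$- or $\pi^5$-term: this is exactly what makes $U^{(5)}$ (and then $\pi^{11}=\pi^{5}\cdot\pi^{6}$ after cubing) the correct cutoff, and without it one would get only the weaker $U^3\cap U^{(4)}\subseteq U^{(3)}$, which does not suffice to pin down the $\pi^4$-term. Everything else is routine once the $\sigma_{-1}$-action on the graded pieces and the shape of cubes modulo $\pi^6$ are in hand.
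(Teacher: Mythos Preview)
Your proof is correct and follows essentially the same route as the paper's: climb the principal-unit filtration by multiplying by powers of $\zeta_9$ at the odd levels and by exploiting the $\sigma_{-1}$-constraint (that $\varepsilon^{1+\sigma_{-1}}$ is a cube) at the even levels, using the explicit cube expansion to pin down what cubes look like in $U^{(2)}$ and $U^{(4)}$. Your filtration bookkeeping is in fact a bit more streamlined than the paper's, which inserts an intermediate ``$\varepsilon^3\equiv\pm\zeta_3^a\pmod{\pi^{10}}$'' step before handling level~$4$; you bypass that by going straight to $U^3\cap U^{(4)}\subseteq U^{(5)}$.
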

\begin{proof}
As a preliminary result, note that the cube of a $\pi$-adic
unit is congruent to $\pm 1\pmod {\pi^3}$ ({\it proof:} compute the cube 
of $a+b\pi+\cdots$).
Also, $\zeta_9^a=(1-\pi)^a\equiv 1-a\pi\pmod{\pi^2}$. Therefore, 
if $\pm\varepsilon=1+a\pi+\cdots$, then $\pm\varepsilon\zeta_9^a\equiv 
(1+a\pi)(1-a\pi)\equiv 1\pmod {\pi^2}$.

Since
$$
\sigma_{-1}(\pi)=1-(1-\pi)^{-1}\equiv -\pi\pmod{\pi^2},
$$
it follows that $\sigma_{-1}(\pi^j)\equiv (-1)^j\pi^j\pmod{\pi^{j+1}}$.

Suppose that $\sigma_{-1}(\varepsilon)\equiv \varepsilon^{-1}$ mod cubes. Write 
$\pm\varepsilon\zeta_9^a=1+b\pi^2+\cdots$.
Since $\sigma_{-1}(\zeta_9)=\zeta_9^{-1}$, we have
\begin{align*}
1+(-1)^2 b\pi^2&\equiv \sigma_{-1}(1+b\pi^2)\equiv\sigma_{-1}(\pm\varepsilon\zeta_9^a)= 
(\pm\varepsilon^{-1}\zeta_9^{-a})\times(\text{cube})\\
&\equiv \pm(1-b\pi^2)\pmod{\pi^3}.
\end{align*}
Therefore, $b\equiv 0\pmod 3$, so
$$
\pm\varepsilon\zeta_9^a=1+c\pi^3+\cdots.
$$
Since $\zeta_3\equiv 1-\pi^3\pmod {\pi^4}$, we have
$$\pm\varepsilon\zeta_9^a\zeta_3^{c}\equiv 1\pmod {\pi^4}.
$$
Cubing this yields
$$
\pm\varepsilon^3\zeta_3^a\equiv 1\pmod {\pi^{10}}.
$$
We have therefore proved that the cube of a $\pi$-adic unit satisfying
$\sigma_{-1}(\varepsilon) \equiv \varepsilon^{-1}$ mod cubes is congruent to a sixth root
of unity mod $\pi^{10}$. 

Now write $\pm\varepsilon\zeta_9^e=1+f\pi^4+\cdots$, for some $e,f$. Then
$$
\varepsilon^{1+\sigma_{-1}} = 1+2f\pi^4+\cdots.
$$
Since this is assumed to be a cube,
it is congruent to $\pm \zeta_3^a\equiv \pm(1-a\pi^3) \pmod{\pi^6}$ for some $a$.
Therefore, $f\equiv 0\pmod 3$, so $\pm\varepsilon\zeta_9^e\equiv 1\pmod {\pi^5}$.
This yields the first part of the lemma. Cubing yields the second part.
\end{proof}

When 3 splits in $F_0$, we know that $\lambda\ge 1$. The following gives criteria
for $\lambda\ge 2$, and gives an algebraic proof of Proposition 1(a), which was proved
using $3$-adic $L$-functions.

\begin{theorem}\label{main2thm} Let $d\equiv 2\pmod 3$ and assume that $3\nmid h^+$. 
The following are equivalent:
\newline
(a) $\lambda\ge 2$. 
\newline
(b) $L_1(\varepsilon_1^{1/3})/L_1$ is unramified.
\newline
(c) $\log_3\varepsilon_0\equiv 0\pmod{\pi^{10}}$.
\newline
(d) $\log_3\varepsilon_0\equiv 0\pmod{\pi^{15}}$.
\end{theorem}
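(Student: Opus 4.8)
The plan is to prove the cycle of implications (c)$\Leftrightarrow$(d)$\Rightarrow$(b)$\Rightarrow$(a)$\Rightarrow$(c). Two of the arrows are immediate. For (b)$\Rightarrow$(a): by Theorem~\ref{mainthm}(d), statement (b) says precisely that $\mathrm{rank}(A_1)\ge 2$, and Lemma~\ref{ranklemma} gives $\mathrm{rank}(A_1)\le\lambda$, whence $\lambda\ge 2$. For the equivalence of (a), (c), (d): since $d\equiv 2\pmod 3$ the prime $3$ splits in $F_0$, so Proposition~\ref{padicLprop}(a) applies and yields $\lambda\ge 2\iff\log_3\varepsilon_0\equiv 0\pmod 9$. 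The $3$-adic class number formula gives $(\log_3\varepsilon_0)/\sqrt D\in\mathbf Q_3$, and since $3$ exactly divides $D$ we have $v_\pi(\sqrt D)=3$; hence $v_\pi(\log_3\varepsilon_0)\in\{3,9,15,21,\dots\}$ whenever $\log_3\varepsilon_0\ne 0$. Therefore each of $v_\pi(\log_3\varepsilon_0)\ge 10$ (this is (c)), $v_\pi(\log_3\varepsilon_0)\ge 12$ (this is $\log_3\varepsilon_0\equiv 0\bmod 9$), and $v_\pi(\log_3\varepsilon_0)\ge 15$ (this is (d)) is equivalent to $v_\pi(\log_3\varepsilon_0)\ge 15$, so (a), (c), (d) are all equivalent.

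It remains to prove (c)$\Rightarrow$(b). As in the proof of Theorem~\ref{unramthm}, by \cite[Exercise 9.3]{washington} together with the fact that $\varepsilon_1$ is a unit (so $(\varepsilon_1)$ is trivially a cube of an ideal), the extension $L_1(\varepsilon_1^{1/3})/L_1$ is everywhere unramified if and only if $\varepsilon_1$ is congruent to a cube modulo $\pi^9$ in the completion $\mathbf Q_3(\zeta_9)$ at each prime above $3$; so it suffices to establish this congruence, granted $v_\pi(\log_3\varepsilon_0)\ge 15$. We argue locally, taking $\tau=\sigma_4$ and $g=\sigma_{-1}$ as in Section 8. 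Since $3\nmid h^+$ we are in Case (i) of Theorem~\ref{keylemma}, so there are $\varepsilon_1,\varepsilon_2$ with $\varepsilon_1=\varepsilon_2^{1-\tau}$, $\varepsilon_0=\varepsilon_2^{1+\tau+\tau^2}$, and $\varepsilon_2\in M=(E/E^3)^{1-g}$. In particular $\sigma_{-1}(\varepsilon_2)\equiv\varepsilon_2^{-1}$ modulo cubes, so Lemma~\ref{pi11lemma} gives an integer $m$ with $\varepsilon_2\equiv\pm\zeta_9^m\pmod{\pi^5}$ and $\varepsilon_2^3=\pm\zeta_3^m(1+\phi)$ where $v_\pi(\phi)\ge 11$. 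Cubing the two relations of Theorem~\ref{keylemma}(i)(d), and using that $\tau$ fixes $\zeta_3$, yields
$$
\pm\varepsilon_0^3\equiv 1+\mathrm{Tr}(\phi)\pmod{\pi^{22}},\qquad \varepsilon_1^3\equiv 1+(\phi-\tau\phi)\pmod{\pi^{22}},
$$
where $\mathrm{Tr}=1+\tau+\tau^2$ is the trace from $\mathbf Q_3(\zeta_9)$ to $\mathbf Q_3(\zeta_3)$. Applying $\log_3$ to the first relation gives $3\log_3\varepsilon_0\equiv\mathrm{Tr}(\phi)\pmod{\pi^{22}}$, so the hypothesis $v_\pi(\log_3\varepsilon_0)\ge 15$ is equivalent to $\mathrm{Tr}(\phi)\equiv 0\pmod{\pi^{21}}$. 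On the other hand, writing $\varepsilon_1\zeta_3^m=1+\nu$ (using $\varepsilon_2\equiv\pm\zeta_9^m\pmod{\pi^5}$ one gets $v_\pi(\nu)\ge 6$, and since $\zeta_3^m=(\zeta_9^m)^3$ is a cube, $\varepsilon_1$ is a cube mod $\pi^9$ exactly when $1+\nu$ is), and cubing, one obtains $3\nu\equiv\phi-\tau\phi\pmod{\pi^{18}}$; thus $\nu$ modulo $\pi^9$ is determined by $\phi$ modulo $\pi^{15}$, and what must be shown is that $1+\nu\equiv 1+b\pi^6-b\pi^8\pmod{\pi^9}$ for some integer $b$ — which, by the computation in the proof of Theorem~\ref{unramthm}, is exactly the general form of a cube that is $\equiv 1\pmod{\pi^6}$.

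The heart of the matter, and what I expect to be the main obstacle, is this last explicit local computation. Using $\pi=1-\zeta_9$ and expanding $\sigma_4(\pi)=1-\zeta_9^4=4\pi-6\pi^2+4\pi^3-\pi^4$ in powers of $\pi$, exactly in the style of Lemma~\ref{pi11lemma} and the congruences preceding Theorem~\ref{unramthm}, the problem reduces to an identity of $\mathbf F_3$-linear forms: the two $\mathbf F_3$-conditions packaged in $\mathrm{Tr}(\phi)\equiv 0\pmod{\pi^{21}}$ must force the two $\mathbf F_3$-conditions on $\nu$ (vanishing of the $\pi^7$-coefficient, and the $\pi^8$-coefficient being the negative of the $\pi^6$-coefficient) that make $1+\nu$ a cube mod $\pi^9$. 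The delicate point is that $1-\tau$ and $1+\tau+\tau^2$ shift $\pi$-adic valuations by amounts depending on the exponent modulo $3$ (because $\mathbf Q_3(\zeta_9)/\mathbf Q_3(\zeta_3)$ is wildly ramified, with different $\pi^6$), so one must check carefully that the precision $\pi^{11}$ from Lemma~\ref{pi11lemma} for $\varepsilon_2^3$, combined with $\mathrm{Tr}(\phi)\equiv 0\pmod{\pi^{21}}$, is just enough to resolve $\nu$ modulo $\pi^9$ and to match the cube pattern. A possibly cleaner way to organize this step, modulo the same bookkeeping, is via the additive Hilbert Theorem 90: since $\mathrm{Tr}(\phi)$ lies in $\mathrm{Tr}(\pi^{15}\mathcal O)$, one may replace $\phi$ by $\phi'\equiv\phi\pmod{\pi^{15}}$ with $\mathrm{Tr}(\phi')=0$, hence $\phi'=(1-\tau)\xi$; then $\phi-\tau\phi$, and so $3\nu$, becomes $(1-\tau)^2\xi$ to the needed precision, and the extra factor $1-\tau$ is what produces the cube shape modulo $\pi^9$.
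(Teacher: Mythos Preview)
Your logical skeleton --- establishing (c)$\Leftrightarrow$(d), (b)$\Rightarrow$(a) via Theorem~\ref{mainthm} and Lemma~\ref{ranklemma}, and (a)$\Rightarrow$(c) via Proposition~\ref{padicLprop} --- matches the paper exactly. The divergence is entirely in the key step (c)$\Rightarrow$(b).

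The paper's argument there is considerably more direct than yours. Rather than cubing and tracking $\phi$ to precision $\pi^{21}$, the paper works with $\varepsilon_2$ itself modulo $\pi^{10}$: write $\varepsilon_2\equiv\pm\zeta_9^{a}(1+a_5\pi^5+a_6\pi^6+\cdots+a_9\pi^9)\pmod{\pi^{10}}$, tabulate $\tau(\pi^j)\pmod{\pi^{10}}$ for $j=5,\dots,9$ (five one-line congruences), and simply multiply out. One finds $\varepsilon_0=\varepsilon_2^{1+\tau+\tau^2}\equiv\pm\zeta_3^{a}(1+2a_5\pi^9)\pmod{\pi^{10}}$, so $\log_3\varepsilon_0\equiv 2a_5\pi^9\pmod{\pi^{10}}$; thus (c) is exactly $a_5\equiv 0\pmod 3$. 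With $a_5\equiv 0$ one immediately gets $\varepsilon_1=\varepsilon_2^{1-\tau}\equiv 1-a_7\pi^9\pmod{\pi^{10}}$, visibly a cube mod $\pi^9$, and (b) follows. The paper even runs the converse (b)$\Rightarrow$(c) by the same computation, showing $\zeta_3^{a}\varepsilon_1\equiv 1+a_5\pi^7-a_5\pi^8\pmod{\pi^9}$ and invoking Lemma~\ref{pi11lemma} to force $a_5\equiv 0$.

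Your route via $\varepsilon_2^3$, the trace condition $\mathrm{Tr}(\phi)\equiv 0\pmod{\pi^{21}}$, and additive Hilbert~90 is not wrong in principle, but it is left unfinished and involves substantially more bookkeeping (the wild-ramification shifts you yourself flag) for no gain. Since the whole thing takes place in a six-dimensional $\mathbf F_3$-space (the coefficients $a_5,\dots,a_9$), the direct expansion is both shorter and more transparent; your Hilbert~90 reorganization would, after unwinding, reproduce the same linear algebra. I would recommend abandoning the cubing detour and carrying out the $\tau(\pi^j)$ table directly.
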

\begin{proof}
The $3$-adic class number formula shows that $(\log_3\varepsilon_0)/\sqrt{D}
\in \mathbf Q_3$, and the $\pi$-adic valuation
of an element of this field is a multiple of 6. Since the $\pi$-adic valuation
of $\sqrt{D}$ is 3, the equivalence of (c) and (d) follows.

Lemma 2 and Theorem~\ref{mainthm} show that (b) implies (a). Proposition 1 shows
that (a) implies (c).

We now prove the equivalence of (b) and (c). Let $\varepsilon_2$ be as in Theorem 1.
By Lemma~\ref{pi11lemma}, we can write 
$$
\varepsilon_2\equiv \pm \zeta_9^{a}(1+a_5\pi^5+a_6\pi^6+a_7\pi^7+a_8\pi^8+a_9\pi^9)
\pmod {\pi^{10}}.
$$

A calculation shows that
\begin{align*}
\tau(\pi^5)&\equiv \pi^5-\pi^7+\pi^8+\pi^9\pmod{\pi^{10}}\\
\tau(\pi^6)&\equiv \pi^6\\
\tau(\pi^7)&\equiv \pi^7+\pi^9\\
\tau(\pi^8)&\equiv \pi^8\\
\tau(\pi^9)&\equiv \pi^9.
\end{align*}
This yields
$$
\varepsilon_0=\varepsilon_2^{1+\tau+\tau^2}
\equiv \pm\zeta_3^{a} (1+2a_5\pi^9)\pmod{\pi^{10}}.
$$
Therefore, $\log_3\varepsilon_0\equiv 2a_5\pi^9\pmod {\pi^{10}}$.

Suppose now that $\log_3\varepsilon_0\equiv 0\pmod {\pi^{10}}$. Then
$a_5\equiv 0\pmod 3$. Therefore,
\begin{align*}
\varepsilon_1&=\varepsilon_2^{1-\tau}\\
&\equiv (1+a_6\pi^6+a_7\pi^7+a_8\pi^8+a_9\pi^9)^{1-\tau}\\
&\equiv 1-a_7\pi^9\pmod{\pi^{10}}.
\end{align*}
This means that $\varepsilon_1$ is congruent to a cube mod $\pi^9$,
so $L_1(\varepsilon_1^{1/3})/L_1$ is unramified by \cite[Exercise 9.3]{washington}.

Conversely, suppose that $L_1(\varepsilon_1^{1/3})/L_1$ is unramified,
so $\varepsilon_1$ is congruent to a cube mod $\pi^9$. 
Although we already know from Lemma 2 and Proposition 1 that $\log_3\varepsilon_0
\equiv 0\pmod {\pi^{10}}$, we prove this algebraically.

Since $\zeta_3=\zeta_9^3$,
we have that 
$\zeta_3^{a}\varepsilon_1$ is congruent to a cube mod $\pi^9$.
The $\pi$-adic expansion of $\varepsilon_2$ yields
\begin{align*}
\zeta_3^{a}\varepsilon_1&=\zeta_3^{a}\varepsilon_2^{1-\tau}\\
&\equiv (1+a_5\pi^5+\cdots)^{1-\tau} \pmod{\pi^9}\\
&\equiv 1+a_5\pi^7-a_5\pi^8.
\end{align*}
By Lemma~\ref{pi11lemma}, this must be congruent to $\zeta_3^c$ mod $\pi^9$
for some $c$. Since $\zeta_3^c\equiv 1-c\pi^3\pmod {\pi^4}$, we must have $\zeta_3^c=1$,
which implies that $a_5\equiv 0\pmod 3$. Therefore,
$\log_3\varepsilon_0\equiv 0\pmod {\pi^{10}}$.
This completes the proof.\end{proof}

\begin{corollary}\label{lam2cor} Assume that $d\equiv 2\pmod 3$. Then $\lambda\ge 2$
if and only if $\text{rank}(A_1)\ge 2$.
\end{corollary}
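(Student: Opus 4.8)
The plan is to split off the easy direction first, then handle the converse by cases on whether $3\mid h^+$, assembling results already proved.

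\medskip

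\noindent\textit{Easy direction.} If $\operatorname{rank}(A_1)\ge 2$, then $\lambda\ge 2$ by Lemma~\ref{ranklemma} (applied with $n=1$), and this requires nothing beyond $3\nmid d$.

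\medskip

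\noindent\textit{Converse, case $3\nmid h^+$.} Here the hypotheses of Theorems~\ref{mainthm} and~\ref{main2thm} are in force ($d\equiv 2\pmod 3$ and $3\nmid h^+$). Theorem~\ref{main2thm} gives that $\lambda\ge 2$ is equivalent to $L_1(\varepsilon_1^{1/3})/L_1$ being unramified, and Theorem~\ref{mainthm}(d) gives that this in turn is equivalent to $\operatorname{rank}(A_1)\ge 2$. Chaining the two equivalences yields $\lambda\ge 2\iff\operatorname{rank}(A_1)\ge 2$ in this case.

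\medskip

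\noindent\textit{Converse, case $3\mid h^+$.} Since $d\equiv 2\pmod 3$, the prime $3$ splits in $F_0$, so Proposition~\ref{padicLprop}(b) gives $\lambda\ge 2$ unconditionally. Also, $3\mid h^+$ forces the $3$-rank $r$ of the class group of $K_0$ to satisfy $r\ge 1$, so Theorem~\ref{unramthm} gives $\operatorname{rank}(A_1)\ge r+1\ge 2$. Thus both statements are true here, and the equivalence holds trivially. Combining the three parts proves the corollary.

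\medskip

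The main (and essentially only) obstacle is bookkeeping: one must check that the standing hypotheses line up in each case — in particular that $d\equiv 2\pmod 3$ puts us in the split case so that Proposition~\ref{padicLprop}(b) and Theorem~\ref{unramthm} apply, and that $3\mid h^+$ indeed forces $r\ge 1$. Once this is verified, the corollary is a direct synthesis of Lemma~\ref{ranklemma}, Theorems~\ref{mainthm},~\ref{main2thm},~\ref{unramthm}, and Proposition~\ref{padicLprop}, with no new computation needed.
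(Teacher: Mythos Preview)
Your proof is correct and follows essentially the same route as the paper: both split into the cases $3\nmid h^+$ (handled via the equivalence (a)$\iff$(b) of Theorem~\ref{main2thm} together with Theorem~\ref{mainthm}(d)) and $3\mid h^+$ (handled via Theorem~\ref{unramthm}). The only cosmetic difference is that in the $3\mid h^+$ case the paper derives $\lambda\ge 2$ from $\operatorname{rank}(A_1)\ge 2$ using Lemma~\ref{ranklemma} (which you have already invoked for the easy direction), whereas you appeal separately to Proposition~\ref{padicLprop}(b); this extra citation is harmless but unnecessary.
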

\begin{proof}
When $3\nmid h^+$, this follows from the equivalence of (a) and (b) in the theorem,
plus Theorem~\ref{mainthm}(d). When $3\mid h^+$, 
Theorem~\ref{unramthm} implies that $\text{rank}(A_1)\ge 2$
and hence that $\lambda\ge 2$. 
\end{proof}

Therefore, we can see whether or not  $\lambda\ge2$ already at $F_1$. 
When $\lambda\ge 2$, we can obtain more information about Kummer generators.

\begin{proposition} Assume that $d\equiv 2\pmod 3$, that $3\nmid h^+$, and that
$\lambda\ge 2$.\newline
(a) $A_2$ is not an elementary 3-group. \newline
(b) The extension $L_2(\varepsilon_0^{1/9})/L_2$ is everywhere unramified.\newline
(c) $3\mid h^-$ if and only if $L_1(\varepsilon_0^{1/9})/L_1$ is unramified.
\end{proposition}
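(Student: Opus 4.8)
The plan is to run everything through the decomposition of $\varepsilon_0$, regarded in $K_0\hookrightarrow\mathbf Q_3(\zeta_3)$, as
$$\varepsilon_0=\pm\,\zeta_3^{\,c}\,\exp(\eta),\qquad \eta:=\log_3\varepsilon_0,\quad c\in\mathbf Z/3\mathbf Z,$$
where $\exp$ is the $3$-adic exponential; this is legitimate because $\mathcal O^\times_{\mathbf Q_3(\zeta_3)}=\langle-1\rangle\times U^{(1)}$ with $\ker(\log_3|_{U^{(1)}})=\langle\zeta_3\rangle$ the $3$-power torsion of $U^{(1)}$, and because $\exp$ converges on $\eta$ (as $v_\pi(\eta)\ge 15$ by Theorem~\ref{main2thm}(d), valid since $\lambda\ge 2$, with $\pi=1-\zeta_9$). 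I would use throughout the degree-$3^k$ analogue of \cite[Exercise 9.3]{washington}: for a unit $\alpha$, the extension $L_n(\alpha^{1/3^k})/L_n$ is everywhere unramified iff $\alpha$ becomes a $3^k$-th power in the maximal unramified extension $M$ of the completion of $L_n$ at a prime above $3$ — the conditions away from $3$ being automatic, since $\alpha$ and $9$ are units there and $(\alpha)$ is trivially a $3^k$-th power of an ideal. Recall $v_{\pi_2}(\pi)=3$, $v_{\pi_2}(3)=18$ with $\pi_2=1-\zeta_{27}$, so the completions of $L_1,L_2$ at primes above $3$ are $\mathbf Q_3(\zeta_9)$, $\mathbf Q_3(\zeta_{27})$.

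For (b): at a prime of $L_2$ above $3$ the completion is $\mathbf Q_3(\zeta_{27})$, so $v_{\pi_2}(\eta)\ge 45$ and $\exp(\eta)\in 1+\pi_2^{45}\mathcal O_M$ with $M=\mathbf Q_3(\zeta_{27})^{\mathrm{nr}}$. A direct cubing computation in $M$ — the ``mixed'' step $(1+\pi_2^{9}\mathcal O_M)^3=1+\pi_2^{27}\mathcal O_M$, which holds because the leading additive polynomial $x\mapsto u_1x+x^3$ is surjective on the algebraically closed residue field and the higher-order steps are linear with unit leading coefficient, followed by $(1+\pi_2^{27}\mathcal O_M)^3=1+\pi_2^{45}\mathcal O_M$ — gives $1+\pi_2^{45}\mathcal O_M\subseteq(M^\times)^9$, hence $\exp(\eta)\in(M^\times)^9$. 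Since $\zeta_3=\zeta_{27}^{9}$ and $-1=(-1)^9$, also $\pm\zeta_3^{\,c}\in(M^\times)^9$, so $\varepsilon_0\in(M^\times)^9$; thus $L_2(\varepsilon_0^{1/9})/L_2$ is unramified at $3$, hence everywhere unramified.

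For (a): $\varepsilon_0$ is not a cube in $L_2$, because $K_0^\times/(K_0^\times)^3\to K_2^\times/(K_2^\times)^3$ is injective by the argument of Lemma~\ref{cubelemma} (as $\zeta_3\notin K_2$ and $K_2/K_0$ is Galois) and $K_2^\times/(K_2^\times)^3\to L_2^\times/(L_2^\times)^3$ is injective by Lemma~\ref{quadlemma}(a). Since $\zeta_9\in L_2$ and $\zeta_3\in(L_2^\times)^3$, this forces $\varepsilon_0$ to have order exactly $9$ in $L_2^\times/(L_2^\times)^9$, so $L_2(\varepsilon_0^{1/9})/L_2$ is cyclic of degree $9$. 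By (b) it is everywhere unramified, hence lies in the Hilbert $3$-class field of $L_2$; therefore $\widetilde A_2$ has a quotient isomorphic to $\mathbf Z/9\mathbf Z$ and is not elementary, and $A_2\simeq\widetilde A_2$ by Lemma~\ref{scholzlemma} (using $3\nmid h^+$), so $A_2$ is not an elementary $3$-group.

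For (c): first, running the Kummer analysis of Theorem~\ref{mainthm} at level $0$ — since $3\nmid h(K_0)$, the Kummer generators of the maximal unramified elementary abelian $3$-extension of $L_0$ lie in $(E(K_0)/E(K_0)^3)^{1-g}=\langle\varepsilon_0\rangle$ — yields $3\mid h^-\iff\widetilde A_0\ne 0\iff L_0(\varepsilon_0^{1/3})/L_0$ is everywhere unramified. By the criterion above, both ``$L_0(\varepsilon_0^{1/3})/L_0$ unramified'' and ``$L_1(\varepsilon_0^{1/9})/L_1$ unramified'' are conditions at $3$ on $\varepsilon_0$. The same style of cubing computation shows $\exp(\eta)$ is a cube in $\mathbf Q_3(\zeta_3)^{\mathrm{nr}}$ (resp.\ a $9$-th power in $\mathbf Q_3(\zeta_9)^{\mathrm{nr}}$), using $v_{\pi'}(\eta)\ge 5$ (resp.\ $v_\pi(\eta)\ge 15$), where $\pi'=1-\zeta_3$; whereas $\zeta_3$ is neither, since a cube root (resp.\ $9$-th root) of $\zeta_3$ is $\zeta_9$ (resp.\ $\zeta_{27}$), which lies in no unramified extension of $\mathbf Q_3(\zeta_3)$ (resp.\ $\mathbf Q_3(\zeta_9)$). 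Hence each of the two unramifiedness statements is equivalent to $c\equiv 0\pmod 3$, so they are equivalent to each other, and to $3\mid h^-$.

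The step I expect to be the main obstacle is the local bookkeeping underlying (b) and (c): the exponents land exactly on the boundary where $\log_3/\exp$ and the cubing map fail to be isomorphisms (the same phenomenon as the $\pi^9$ versus $\pi^{10}$ distinctions in Sections~7--8), so one must pin down convergence radii and the ``mixed'' cubing steps carefully, and verify the precise degree-$9$ version of the local criterion of \cite[Exercise 9.3]{washington}.
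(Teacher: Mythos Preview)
Your proposal is correct and follows essentially the same strategy as the paper: decompose $\varepsilon_0$ via the $3$-adic $\log$/$\exp$ into a root of unity times a factor congruent to $1$ modulo a high power of $\pi$, then use this to verify the local unramifiedness criterion for (b) and to reduce both sides of (c) to the vanishing of the root-of-unity exponent (your $c$, the paper's $a\bmod 3$). The only differences are cosmetic---you work with $\varepsilon_0$ directly and the ``$9$th power in the maximal unramified extension'' formulation, while the paper works with $\varepsilon_0^{1/3}$ and the ``cube mod $\pi^9$'' formulation of \cite[Exercise~9.3]{washington}---and your treatment of (a) (checking $[L_2(\varepsilon_0^{1/9}):L_2]=9$ explicitly) is more detailed than the paper's one-line claim.
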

\begin{proof} Choose a prime of $\overline{\mathbf Q}$ above 3 and work in
the completion at this
prime.
Theorem~\ref{main2thm} says that $\log_3\varepsilon_0\equiv 0\pmod{\pi^{15}}$,
so $\log_3(\varepsilon_0^{1/3})\equiv 0\pmod {\pi^9}$. Let 
$$
y=\exp(\log_3(\varepsilon_0^{1/3}))\in \mathbf Q_3(\zeta_3).
$$
Then $y\equiv 1\pmod {\pi^9}$ and $\log_3 y=\log_3(\varepsilon_0^{1/3})$.
The kernel of $\log_3$ consists of (integral and fractional)
powers of 3 times roots of unity.
Therefore, $y=\pm \zeta_9^a\varepsilon_0^{1/3}$ for some integer $a$, and 
$\varepsilon_0^{1/3}\equiv \pm\zeta_9^{-a}\pmod{\pi^9}$, so
$\varepsilon_0^{1/3}$ is congruent to a cube in $\mathbf Q_3(\zeta_{27})$ mod $\pi^9$,
so the extension
$\mathbf Q_3(\zeta_{27},\varepsilon_0^{1/9})/\mathbf Q_3(\zeta_{27})$
is unramified. But this is the completion of the extension
$L_2(\varepsilon_0^{1/9})/L_2$ at any of the primes above 3.
Since this extension is unramified at all other primes because 
$\varepsilon_0$
is a unit, the extension is everywhere unramified. This proves (a) and (b).

We now prove (c).
The proof of Scholz's theorem 
(see \cite[Theorem 10.10]{washington}) yields that
$$
3\text{-rank of class group of }F_0=\delta,
$$
where $\delta=1$ if $L_0(\varepsilon_0^{1/3})/L_0$ is unramified and $\delta=0$
otherwise. The relation $\pm \zeta_9^a\varepsilon_0^{1/3}=y\equiv 1\pmod{\pi^9}$ 
shows that, in any
completion at a prime above 3, $\varepsilon_0\equiv\pm \zeta_3^{-a}\pmod {\pi^{15}}$
for some $a$ that possibly depends on the choice of completion. Since
$(1+b_1\sqrt{-3}+\cdots)^3\equiv 1\pmod {3\sqrt{-3}}$,
we see that $\zeta_3^a$ is a cube of an element
of $L_0$ mod $\pi^9$
if and only if $a\equiv 0\pmod 3$. However, since the only Galois conjugates
of $\varepsilon_0$ are $\varepsilon_0$ and $\varepsilon_0^{-1}$, if $\varepsilon_0\equiv
1\pmod {\pi^9}$ at one completion at a prime above 3, then this holds for all 
such completions, hence holds globally. In other words, if we have $a\equiv 0\pmod 3$
when working in one completion,
then this holds in all the other completions.

Suppose now that $3|h^-$. Then $\delta=1$, which means that we must have 
$\varepsilon_0\equiv 1\pmod {\pi^9}$ and $a\equiv 0\pmod 3$. Therefore,
$\varepsilon_0^{1/3}\equiv \pm \zeta_9^{-a} \pmod{\pi^9}$ means that $\varepsilon_0^{1/3}$
is congruent to a cube mod $\pi^9$. Therefore, $L_1(\varepsilon_0^{1/9})/L_1$
is unramified.

Conversely, suppose $L_1(\varepsilon_0^{1/9})/L_1$
is unramified. Then $\varepsilon_0^{1/3}$ is congruent to a cube mod $\pi^9$.
This means that $a\equiv 0\pmod 3$, so $\varepsilon_0\equiv 1\pmod {\pi^9}$.
Therefore, $L_0(\varepsilon_0^{1/3})/L_0$ is unramified, so $3|h^-$.
\end{proof}

When $3|h^-$, part (c) implies that $A_1$ is not elementary.
We can also prove this as follows. Suppose that $A_1$
is elementary. Since $3\nmid h^+$, the rank of $A_0$ is at most 1.
Since $A_0$ is a quotient of $A_1$, it is also elementary,
hence of order 3.
Let $3^{e_i}=|A_i|$. Gold \cite{gold} proves that if $d\equiv 2\pmod 3$ 
and if $e_1-e_0\le 2$ then $\lambda=e_1-e_0$. By Theorem~\ref{mainthm},
the rank of $A_1$ is at most 3, so $e_1\le 3$. Therefore $e_1-1=e_1-e_0\le 2$,
so $\lambda=e_1-1$. But Lemma 2 says that $e_1\le \lambda$, so we have a contradiction.
This proves that $A_1$ is not elementary. Note that we did not need any assumption
on $\lambda$ for this argument.

\section{Another point of view}

Gross-Koblitz \cite{gross} show that if 3 splits in $F_0$ as 
$\mathfrak p\overline{\mathfrak p}$, then
$$
L_3'(0, \chi)=2\log_3\alpha,
$$
where $(\alpha)=\mathfrak p^{h^-}$ and the logarithm is taken in the
$\overline{\mathfrak p}$-adic completion. Therefore,
$$
2\log_3\alpha = f'(0)\log_3(1+3) \equiv (a_1)(3) \pmod 9.
$$
This yields the following:
\begin{lemma}\label{gklemma} $\lambda\ge 2$ if and only if $\log_3 \alpha\equiv 0\pmod 9$.
\end{lemma}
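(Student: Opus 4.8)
The plan is to read off the claimed equivalence directly from the interpolation property of the power series $f(T)$ attached to $L_3(s,\chi)$, using the Gross--Koblitz formula as the source of an explicit $3$-adic expression for $f'(0)$. Recall from Section~1 that $\lambda$ is the smallest index $i$ with $a_i\not\equiv 0\pmod 3$, and that under our standing hypotheses ($3$ splits in $F_0$, so $\chi\omega^{-1}(3)=1$) we have $a_0=L_3(0,\chi)=(1-\chi\omega^{-1}(3))h^-=0$. Hence $\lambda\ge 1$ automatically, and $\lambda\ge 2$ is equivalent to $a_1\equiv 0\pmod 3$. So the entire content is to show $a_1\equiv 0\pmod 3 \iff \log_3\alpha\equiv 0\pmod 9$.

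The key step is the chain of identities already displayed before the statement: since $L_3(s,\chi)=f((1+3)^s-1)$ and $a_0=f(0)=0$, differentiating at $s=0$ gives $L_3'(0,\chi)=f'(0)\cdot\log_3(1+3)$, and the Gross--Koblitz theorem gives $L_3'(0,\chi)=2\log_3\alpha$, where $(\alpha)=\mathfrak p^{h^-}$ and $\log_3$ is the $\overline{\mathfrak p}$-adic logarithm. Now $\log_3(1+3)=3-\tfrac{9}{2}+\cdots$, so $\log_3(1+3)\equiv 3\pmod 9$, and $f'(0)=a_1$ (since $f(T)=a_0+a_1T+\cdots$ with $a_0=0$). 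Therefore
$$
2\log_3\alpha = a_1\log_3(1+3)\equiv 3a_1\pmod 9.
$$
Reducing this congruence: $3a_1\equiv 0\pmod 9 \iff a_1\equiv 0\pmod 3 \iff 2\log_3\alpha\equiv 0\pmod 9 \iff \log_3\alpha\equiv 0\pmod 9$ (the factor $2$ being a unit mod $9$). Combining with the observation that $\lambda\ge 2\iff a_1\equiv 0\pmod 3$ finishes the proof.

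The only point requiring a small amount of care is the bookkeeping of $3$-adic valuations in the relation $2\log_3\alpha\equiv 3a_1\pmod 9$: one must note that $\log_3(1+3)$ is exactly divisible by $3$ (not a higher power), so that dividing through is legitimate and no information is lost in passing to the congruence mod $9$. This is immediate from the series for $\log_3(1+3)$. There is no real obstacle here; the statement is essentially a transcription of the Gross--Koblitz value formula into the language of the Iwasawa power series, and everything else has been set up in the preceding paragraphs.
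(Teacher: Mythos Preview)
Your proposal is correct and follows exactly the same approach as the paper: the displayed computation $2\log_3\alpha = f'(0)\log_3(1+3)\equiv 3a_1\pmod 9$ immediately preceding the lemma \emph{is} the paper's proof, and you have simply unpacked it with the additional observation (implicit in Section~1) that $a_0=0$ in the split case so that $\lambda\ge 2\iff a_1\equiv 0\pmod 3$. One minor remark: $f'(0)=a_1$ holds regardless of whether $a_0=0$, so your parenthetical ``with $a_0=0$'' is unnecessary there (though $a_0=0$ is of course needed for the $\lambda\ge 2\iff a_1\equiv 0$ step).
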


Moreover, the $3$-adic analytic class number formula implies that
$$
3a_1\equiv f(3) = \frac{2h^+\log_3 \varepsilon_0}{\sqrt{D}}  \pmod 9,
$$
so we obtain the interesting relation
$$
\log_3(\alpha)\equiv \frac{h^+\log_3 \varepsilon_0}{\sqrt{D}}  \pmod 9.
$$
Note that $\alpha$ is a $3$-unit that lies in the $+$ component
for the action of $g$ and the $-$ component for the action of $\sigma$ 
on $3$-units mod
powers of $3$. The units $\varepsilon_i$ that we worked with in previous sections
are in the components
of the parities opposite from $\alpha$. 

The fact that Theorem 4 is based on the 3-adic $L$-function
at both 0 and 1 means that it is natural that it involves both $F_0$ and
$K_0$, which are related to each other through reflection theorems,
hence through the relations between Kummer generators and class groups.
In the present situation, everything involves only the 3-adic $L$-function
at 0, hence involves only $F_0$, not $K_0$. 
We prove the following analogue of Theorem 4. It is
interesting to note that the higher genus groups in the proof
of Theorem 1 are replaced by ``higher ambiguous groups''
of ideal classes, namely ideal classes that are annihilated by some
power of $1-\tau$.

\begin{theorem}\label{gkprop} The following are equivalent:\newline
(a) $\lambda\ge 2$\newline
(b) the 3-rank of $A_1$ is greater than or equal to 2\newline
(c) $\alpha$ is a norm for $F_1/F_0$.
\end{theorem}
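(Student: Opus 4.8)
The plan is to prove the chain of equivalences (a) $\Leftrightarrow$ (b) $\Leftrightarrow$ (c) by routing everything through the capitulation-style analysis of $\alpha$ and its behavior under $\tau$, exactly paralleling the proof of Theorem~\ref{mainthm} but now working with the $+$ component for $g$ and the $-$ component for $\sigma$, where $\alpha$ lives. First I would recall from Lemma~\ref{gklemma} that (a) is equivalent to $\log_3\alpha\equiv 0\pmod 9$, where $\alpha$ generates $\mathfrak{p}^{h^-}$ in $F_0$. The key object is the group of $3$-units of $F_1$ in the appropriate eigencomponent for $g$ and $\sigma$; modulo cubes this is a small $\mathbf{F}_3$-vector space on which $\tau$ acts, and the class $\alpha$ mod cubes sits inside it just as $\varepsilon_0$ did for $M$. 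The structure of $A_1$ as a $\mathbf{Z}_3[\mathrm{Gal}(F_1/F_0)]$-module, together with the fact that $F_1/F_0$ is totally ramified at $3$ (so norms are surjective on class groups), will control the $\tau$-filtration, and Lemma~\ref{dimlemma}-type reasoning (the Jordan form of $1-\tau$) forces the rank of $A_1$ to jump by at most one per level, so rank $\ge 2$ is governed by a single congruence.

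Next I would establish (b) $\Leftrightarrow$ (c). The Gross--Koblitz element $\alpha$ is a $3$-unit generating $\mathfrak{p}^{h^-}$, and the question of whether $\alpha$ is a norm from $F_1$ is, by Hilbert 90 / the standard local-global analysis of norms in the cyclic degree-$3$ totally ramified extension $F_1/F_0$, equivalent to $\alpha$ being a cube in the completion at the ramified prime (up to the appropriate power of $\pi$), which by Lemma~\ref{gklemma}'s computation is $\log_3\alpha\equiv 0\pmod 9$. On the other side, $\alpha$ being a norm is exactly the statement that the ideal class of $\mathfrak{p}$ (which has order dividing $h^-$ in $A_0$, and in fact we care about the $3$-part) lies in the image of the norm from $A_1$, equivalently that $\mathfrak{p}$ does \emph{not} capitulate in a way that produces a new independent class; more precisely, the failure of $\alpha$ to be a norm is detected by a nontrivial element of $H^1(\mathrm{Gal}(F_1/F_0), E_{F_1})$ or of $\widehat{H}^0$, which by a Herbrand-quotient count (as in Proposition~\ref{oneorthreeprop}) corresponds exactly to the rank of $A_1$ exceeding the rank of $A_0$. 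This ties directly to (a) via Lemma~\ref{ranklemma} and the fact that in the split case rank$(A_0)\le 1$.

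The cleanest route is probably: (a) $\Leftrightarrow$ (c) first, using Lemma~\ref{gklemma} and the local description of norms in $F_1/F_0$ at the prime above $3$ (together with the observation that $\alpha$ is automatically a norm locally at every other prime, since $F_1/F_0$ is unramified there and $\alpha$ is a unit there, so Hasse's norm theorem for cyclic extensions reduces the global norm condition to the single place above $3$); then (a) $\Leftrightarrow$ (b) by combining Lemma~\ref{ranklemma} with a genus-theory argument showing rank$(A_1)\ge 2$ forces $a_1\equiv 0\pmod 3$ and conversely, using that $\alpha^{1/3}$ generates an unramified extension of $F_1(\zeta_3)$ precisely when $\log_3\alpha\equiv0\pmod 9$ (the analogue of Theorem~\ref{unramthm}'s ramification criterion), which contributes the needed independent class in $\widetilde{A}_1^-\simeq A_1$.

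The main obstacle I anticipate is the (b) $\Rightarrow$ (a) direction: knowing rank$(A_1)\ge 2$ abstractly, one must exhibit that the extra class is \emph{reflected} by $\alpha$ rather than by some unrelated Kummer generator. This requires a reflection argument pinning down which elements of $(3\text{-units of }F_1)/(\text{cubes})$ in the relevant eigencomponent can serve as Kummer generators of unramified extensions, and showing that in this component — because it is one-dimensional over $\mathbf{F}_3$ modulo the ambiguous part, being generated by $\alpha$ — any such generator must be $\alpha$ up to cubes and roots of unity. The delicate point is handling the prime above $3$ correctly: $\alpha$ is only a $3$-unit, not a unit, so one must track its valuation at $\mathfrak{p}_1$ and verify that the $\pi$-adic congruence conditions for unramifiedness interact correctly with the $h^-$-th power, exactly as the Gross--Koblitz formula demands.
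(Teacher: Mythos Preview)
Your (a) $\Leftrightarrow$ (c) via Lemma~\ref{gklemma} and Hasse's norm theorem is essentially the paper's argument for (a) $\Rightarrow$ (c), and your converse (global norm $\Rightarrow$ local norm at $\overline{\mathfrak p}$ $\Rightarrow$ $\log_3\alpha\equiv 0\pmod 9$) is a clean shortcut the paper does not take. One correction: there are \emph{two} primes of $F_0$ above $3$, not one; the paper handles the second via the product formula for the norm residue symbol, and you should do the same. Likewise (b) $\Rightarrow$ (a) via Lemma~\ref{ranklemma} matches the paper.

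The genuine gap is the remaining implication, which you mislabel: (b) $\Rightarrow$ (a) is the trivial direction, and the obstacle is (a) $\Rightarrow$ (b), equivalently (c) $\Rightarrow$ (b). Your proposed Kummer-theoretic route through $\alpha^{1/3}$ does not work. Since $\alpha\in F_0$ is fixed by $g$ and $g$ acts by $-1$ on $\mu_3$, the Galois equivariance of the Kummer pairing forces $g$ to act by $-1$ on $\text{Gal}(L_1(\alpha^{1/3})/L_1)$; hence this extension descends to $K_1$, not to $F_1$, and contributes to the $3$-class group of $K_1$ rather than to $A_1$. Nor does $\alpha$ sit in the $\sigma$-eigenspace you want: $\sigma(\alpha)\alpha=3^{h^-}$, and $3$ is not a cube in $L_1$, so when $3\nmid h^-$ the class of $\alpha$ mod cubes is not in the $\sigma=-1$ component either. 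The Herbrand-quotient remark is also off target: that computation concerns $H^i(\langle\tau\rangle,E_{F_1})$ and capitulation, not whether the specific element $\alpha$ is a norm.

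The paper's (c) $\Rightarrow$ (b) is purely ideal-theoretic inside $F_1$ and does not use Kummer theory at all. From $\alpha=\text{Norm}_{F_1/F_0}(\beta)$ one writes $(\beta)=\mathfrak P^{h^-}J^{1-\tau}$, proves the key fact $\text{ord}(\mathfrak P)=3\,\text{ord}(\mathfrak p)$, and then splits into two cases according to whether the full $3$-part of $h^-$ divides $\text{ord}(\mathfrak p)$. In one case the classes of $\mathfrak P$ and $J$ are shown to generate a subgroup of order at least $9|A_0|$, which is incompatible with $A_1$ cyclic; in the other, the ambiguous classes $A_1^{\langle\tau\rangle}$ already have order exceeding their exponent. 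Either way $A_1$ is noncyclic, so $\text{rank}(A_1)\ge 2$. This ideal-class argument is the missing substantive step in your proposal.
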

\begin{proof}
Lemma~\ref{ranklemma} shows that (b) implies (a). 

Assume (a). Lemma~\ref{gklemma} implies that $\log_3\alpha\equiv 0\pmod 9$,
where the logarithm is taken in the $\overline{\mathfrak p}$-adic completion of $F_0$.
Let $\gamma=\exp((1/3)\log_3(\alpha))$. Then $\log_3(\gamma^3)=
\log_3(\alpha)$,
so $\gamma^3$ and $\alpha$ differ by a root of unity in the completion of $F_0$ at $\overline{\mathfrak p}$.
Since $3$ splits in $F_0$, this completion is $\mathbf Q_3$, 
whose only roots of unity are $\pm 1$.
Therefore, $\alpha$ is a local cube, hence a local norm at $\overline{\mathfrak p}$ for $F_1/F_0$. Since $F_1/F_0$
is unramified at all primes not above 3, $\alpha$ is a local norm for all places except possibly 
$\mathfrak p$. By the product formula for the norm residue symbol, $\alpha$ is a norm
also at $\mathfrak p$. Since $F_1/F_0$ is cyclic, Hasse's norm theorem says that $\alpha$ is a global
norm from $F_1$. (A similar argument appears in \cite{gold}.)

Assume (c). Write $\text{Norm}_{F_1/F_0}(\beta)=\alpha$. 
Then the ideal norm of $(\beta)$ is $(\alpha)=\mathfrak p^{h^-}$.
Let $\mathfrak P$ be the prime above $\mathfrak p$, so $\text{Norm}_{F_1/F_0}(\mathfrak P)=\mathfrak p$.
Therefore, the $\mathfrak P$-adic valuation of $\beta$ is $h^-$. 
Since $\text{Norm}_{F_1/F_0}((\beta)\mathfrak P^{-h^-})=(1)$,
we have $(\beta)=\mathfrak P^{h^-}J^{1-\tau}$ for some ideal $J$ of $F_1$. 

Let $\text{ord}(B)$ denote the order of an ideal $B$ in the class group
of $F_1$. Since the natural map from the class group of $F_0$ to the class group
of $F_1$ is injective (see \cite[Proposition 13.26]{washington}), if $B$ is an ideal of $F_0$,
then its order in the class group of $F_0$ equals its order in the class group
of $F_1$, so we need to consider only the order in $F_1$.

\begin{lemma}\label{3lemma} 
$\text{ord}(\mathfrak P)=3\text{ord}(\mathfrak p)$.
\end{lemma}
\begin{proof} (cf.~\cite[proof of Proposition 2]{greenberg}) If $3|\text{ord}(\mathfrak P)$,
then the lemma follows immediately from the fact that $\mathfrak P^3
=\mathfrak p$. So we need to show that $3|\text{ord}(\mathfrak P)$.

Suppose $\mathfrak P^b$
is principal for some $b>0$. Then $(\mathfrak P/\overline{\mathfrak P}_1)^b
=(\gamma)$ for some $\gamma\in F_1$. Let $\delta=\gamma/\overline{\gamma}$. Then
$$
(\delta)=(\mathfrak P/\overline{\mathfrak P}_1)^{2b}.
$$ 
Since this ideal is fixed by
$\tau$, it follows that $\delta^{1-\tau}$ is a unit. Since it has absolute value 1
at all embeddings into $\mathbf C$, it is a root of unity, hence $\pm 1$.
Since the norm from $F_1$ to $F_0$ of $\delta^{1-\tau}$ is 1, we must have
$\delta^{1-\tau}=+1$. Therefore, $\delta\in F_0$. 
Since $2b=v_{\mathfrak P}(\delta)=3v_{\mathfrak p}(\delta)$,
we have $3|b$. Therefore, $3|\text{ord}(\mathfrak P)$. \end{proof}

Suppose $\mathfrak P^bJ^c$ is principal for some $b, c$. 
Applying $1-\tau$ yields
that $J^{(1-\tau)c}$ is principal, which implies that $\mathfrak P^{ch^-}$
is principal. 
Write $h^-=3^n v$ with $3\nmid v$.

{\bf Case 1.} Suppose $3^n|\text{ord}(\mathfrak p)$. Then 
$3^{n+1}|\text{ord}(\mathfrak P)$. Therefore, $3|c$. It follows that
the subgroup of the ideal class group generated by $\mathfrak P$
and $J$ has order at least 3 times the order of the subgroup generated
by $\mathfrak P$, hence has order at least $3^{n+2}$. Therefore,
$|A_1|\ge 9|A_0|$. The following lemma implies that $A_1$ cannot be cyclic
(cf. \cite{moriya}).

\begin{lemma} If $A_1$ is cyclic, then $|A_1|=3|A_0|$.
\end{lemma}
\begin{proof}
Write $A_1\simeq \mathbf Z/3^m\mathbf Z$, where $m\ge 1$ by Lemma~\ref{3lemma}. 
Then $\tau\in\text{Aut}(A_1)$
has order 3, hence corresponds to multiplication by $1+3^{m-1}x$ for
some $x$. Therefore, $1+\tau+\tau^2\equiv 3\pmod {3^m}$. 
The map $1+\tau+\tau^2$ is the endomorphism of $A_1$
given by the norm followed by the natural map from $A_0$ to $A_1$.
Since $F_1/F_0$
is totally ramified, the norm map from $A_1$ to
$A_0$ is surjective.
Since $A_0$ lies in the minus component with respect to complex conjugation,
the map $A_0\to A_1$ is injective. It follows that
so $A_0\simeq 3\mathbf Z/3^m\mathbf Z$.\end{proof}

{\bf Case 2.} Suppose $3^n\nmid \text{ord}(\mathfrak p)$. Then 
$\mathfrak P^{h^-}$ is principal, so the subgroup of $A_1$
generated by $A_0$ and $\mathfrak P$ has exponent at most
$h^-$. 
\begin{lemma} Every ideal class of $A_1$ that is fixed by $\tau$
contains an ideal of the form $I\mathfrak P^a$, with $I$
an ideal from $F_0$.\end{lemma}
\begin{proof} Let $B$ represent a fixed ideal class, so $B^{1-\tau}=(\gamma)$
for some $\gamma\in F_1$. Then the norm from $F_1$ to $F_0$ of $\gamma/\overline{\gamma}$
is a unit and has absolute value 1 at all places, hence is a root of unity.
The only roots of unity in $F_1$ are $\pm 1$, so $\text{Norm}
(\gamma^2/\overline{\gamma}^2)=1$,
which means that $\gamma^2/\overline{\gamma}^2=\delta^{1-\tau}$ for some $\delta\in F_1$.
Therefore, $B^2/\delta$ is fixed by $\tau$, so it is of the form 
$\mathfrak P^aI$ with $I$ an ideal from $F_0$. Since 
the class of $B$ has 3-power order,
the class of $B$ also contains an ideal of this form. \end{proof} 

The order of the subgroup of the class group of $F_1$ that is fixed
by $\tau$ is $3h^-$ (see \cite[Ia, Satz 13]{hasse}). Therefore, this subgroup
has order greater than its exponent, so it is noncyclic. It follows
that $A(F_1)$ is noncyclic. 

This completes the proof that (c) implies (b). \end{proof}

The theorem has the interesting corollary that when $3$ 
splits in $F_0=\mathbf Q(\sqrt{-d})$, we can determine whether
$\lambda \ge 2$ by looking at the first level $F_1$ 
of the $\mathbf Z_3$-extension. Namely, we have $\lambda\ge 2$
if and only if $\text{rank }A_1\ge 2$. This was also obtained 
from the results proved using Kummer 
generators in Section 8.

When $3\nmid h^+$ but $3\mid h^-$, the condition that $\alpha$ is a local cube in the completion at $\overline{\mathfrak p}$ can
be strengthened to saying that $\alpha$ is a global cube.

\begin{proposition}\label{gkprop2} Assume that $3$ splits in $F_0$, that $3\nmid h^+$, 
and that $3\mid h^-$. Then
$\lambda\ge 2$ if and only if $\alpha$ is a cube in $F_0$. \end{proposition}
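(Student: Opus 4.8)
\emph{Proof proposal.} The plan is to convert the statement into a question about the Kummer extension $L_0(\alpha^{1/3})/L_0$, where $L_0=F_0(\zeta_3)=\mathbf Q(\sqrt{-d},\sqrt{3d})$, and then to use the hypothesis $3\nmid h^+$ to force the relevant part of the $3$-class group of $L_0$ to vanish. By Lemma~\ref{quadlemma}(a) applied to the quadratic extension $L_0/F_0$, the element $\alpha$ is a cube in $F_0$ precisely when it is a cube in $L_0$, i.e.\ when $L_0(\alpha^{1/3})=L_0$, so it is enough to work over $L_0$.

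For the direction ``$\alpha$ a cube in $F_0$ $\Rightarrow\ \lambda\ge2$'' I would just observe that if $\alpha=\beta^3$ with $\beta\in F_0$ then $(\beta)=\mathfrak p^{h^-/3}$, so $\beta$ is a $\overline{\mathfrak p}$-adic unit, whence $\log_3\beta\in3\mathbf Z_3$ and $\log_3\alpha=3\log_3\beta\equiv0\pmod9$; Lemma~\ref{gklemma} then gives $\lambda\ge2$. For the converse, $\lambda\ge2$ gives $\log_3\alpha\equiv0\pmod9$ by Lemma~\ref{gklemma}, and the exponential argument already used in the proof of Theorem~\ref{gkprop} (set $\gamma=\exp(\tfrac13\log_3\alpha)$; then $\gamma^3/\alpha$ is a root of unity in $\mathbf Q_3$, hence $\pm1$, hence a cube) shows that $\alpha$ is a cube in the completion $F_{0,\overline{\mathfrak p}}=\mathbf Q_3$.

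The real work is to promote ``cube at $\overline{\mathfrak p}$'' to ``everywhere-unramified extension of $L_0$.'' Let $\mathfrak P$ and $\overline{\mathfrak P}$ be the primes of $L_0$ above $\mathfrak p$ and $\overline{\mathfrak p}$; each is ramified over $F_0$, with completion $\mathbf Q_3(\zeta_3)$. Away from $3$ the extension $L_0(\alpha^{1/3})/L_0$ is unramified because $\alpha$ is a unit there; at $\overline{\mathfrak P}$ it is trivial, since $\alpha$ is a cube already in $\mathbf Q_3\subseteq L_{0,\overline{\mathfrak P}}$. The delicate point is $\mathfrak P$. Here I would use the norm relation $\alpha\,\sigma(\alpha)=N_{F_0/\mathbf Q}(\alpha)=\pm3^{h^-}$, where $\sigma$ generates $\text{Gal}(L_0/K_0)$ and interchanges $\mathfrak P$ and $\overline{\mathfrak P}$: since $3\mid h^-$ and $3=-(\sqrt{-3}\,)^2$ in $\mathbf Q_3(\zeta_3)$, the quantity $\pm3^{h^-}$ is a cube in $L_{0,\mathfrak P}$; and since $\sigma$ transports the (already established) fact that $\alpha$ is a cube in $L_{0,\overline{\mathfrak P}}$ into the fact that $\sigma(\alpha)$ is a cube in $L_{0,\mathfrak P}$, it follows that $\alpha=(\alpha\,\sigma(\alpha))\,\sigma(\alpha)^{-1}$ is a cube in $L_{0,\mathfrak P}$ as well. (One also checks $v_{\mathfrak P}(\alpha)=2h^-\equiv0\pmod3$, so there is no valuation obstruction.) Since $L_0$ is totally imaginary, this makes $L_0(\alpha^{1/3})/L_0$ everywhere unramified. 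This transfer of the local-cube property from $\overline{\mathfrak p}$ to $\mathfrak p$, leaning on both the norm relation and $3\mid h^-$, is the step I expect to be the main obstacle.

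It then remains to rule out a nontrivial unramified extension. Since $g$ fixes $\alpha$ and acts by $-1$ on $\mu_3$, Galois equivariance of the Kummer pairing shows that $g$ acts by $-1$ on $\text{Gal}(L_0(\alpha^{1/3})/L_0)$, which is a quotient of $\widetilde{A}_0/\widetilde{A}_0^3$. But $3\nmid h^+$, so Lemma~\ref{scholzlemma} identifies $\widetilde{A}_0$ with $A_0$ via lifts of ideals of $F_0$, on which $g$ acts trivially; hence $\text{Gal}(L_0(\alpha^{1/3})/L_0)$ is trivial, so $\alpha$ is a cube in $L_0$ and therefore in $F_0$.
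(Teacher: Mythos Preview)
Your proof is correct and follows essentially the same route as the paper: the easy direction via Lemma~\ref{gklemma}, then promoting the local cube at $\overline{\mathfrak p}$ to an everywhere unramified extension $L_0(\alpha^{1/3})/L_0$ using the norm relation $\alpha\,\overline{\alpha}=3^{h^-}$ together with $3\mid h^-$, and finally killing that extension with the hypothesis $3\nmid h^+$ through Galois equivariance of the Kummer pairing. The only cosmetic difference is that in the last step you read the eigenspace condition through $g$ (trivial on $\alpha$, $-1$ on $\mu_3$, hence $-1$ on the Galois group, which must vanish since $\widetilde{A}_0\simeq A_0$ carries trivial $g$-action), while the paper reads it through $\sigma$ (inversion on $\alpha$ mod cubes, $-1$ on $\mu_3$, hence trivial on the Galois group, which then corresponds to the $3$-class group of $K_0$); these are dual formulations of the same vanishing.
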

\begin{proof}
Assume that $\lambda\ge 2$. By the proof of Theorem~\ref{gkprop}, $\alpha$ is a cube in the completion of $F_0$ at
$\overline{\mathfrak p}$. Therefore, $L_0(\alpha^{1/3})/L_0$ is unramified at $\overline{\mathfrak p}$.
This implies that $L_0(\overline{\alpha}^{1/3})/L_0$ is unramified at $\mathfrak p$.

Since 
$$
(\alpha\overline{\alpha})=(\mathfrak p\overline{\mathfrak p})^{h^-}=(3)^{h^-},
$$
$\alpha\overline{\alpha}=3^{h^-}$ (the unit must be $+1$ since both sides are positive), which is a cube since $3\mid h^-$.
Therefore,
$$
L_0(\alpha^{1/3})=L_0(\overline{\alpha}^{1/3}).
$$
It follows that both $\mathfrak p$ and $\overline{\mathfrak p}$ are unramified in $L_0(\alpha^{1/3})/L_0$.
Since only primes above 3 can be ramified in this extension, the extension must be everywhere unramified.
The fact that $\alpha\overline{\alpha}$ is a cube says that $\sigma$ (= complex conjugation) acts by
inversion on $\langle \alpha\rangle$ mod cubes. Also, $\sigma$ 
acts by inversion on $\mu_3$. The Galois
equivariance of the Kummer pairing implies that $\sigma$ acts trivially on $\text{Gal}(L_0(\alpha^{1/3})/L_0)$,
which means that this extension corresponds to part of the ideal class of $K_0$. Since $3\nmid h^+$, this
extension must be trivial, so $\alpha$ is a cube in $L_0$. By Lemma~\ref{cubelemma}, $\alpha$ is a cube in $F_0$.

Conversely, if $\alpha$ is a cube in $F_0$, say $\alpha=\beta^3$, then $\log_3\alpha=3\log_3\beta\equiv 0\pmod 9$,
since $\log_3 x\equiv 0\pmod 3$ for all $x\in \mathbf Q_3$. Therefore, $\lambda\ge 2$ by Lemma~\ref{gklemma}.
\end{proof}

The conditions that $3\nmid h^+$ and $3\mid h^-$ are essential in
Proposition~\ref{gkprop2}. When $d=35$, we have $h^-=h^+=2$
and $\alpha=(1+\sqrt{-35})/2$. Therefore, $\log_3\alpha\equiv 0\pmod 9$,
so $\lambda\ge 2$. However, $\alpha$ is not a cube.
When $d=107$, we have $h^-=h^+=3$ and $\alpha=(1+\sqrt{-107})/2$.
Therefore, $\log_3\alpha\equiv 0\pmod 9$, so $\lambda\ge 2$.
Again, $\alpha$ is not a cube.

\begin{corollary}  Assume 3 splits in $F_0$, that $3\nmid h^+$, and that
$3\mid h^-$.  Then
the rank of $A_1$ is greater than or equal to 2 if and only if $\alpha$ is a cube.
\end{corollary}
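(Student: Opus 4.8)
The plan is to obtain this corollary simply by chaining together the two results just established, with no new argument required. First I would invoke Theorem~\ref{gkprop}: under the standing hypothesis of this section that $3$ splits in $F_0$, the equivalence of parts (a) and (b) of that theorem says that the $3$-rank of $A_1$ is at least $2$ precisely when $\lambda\ge 2$. Next I would invoke Proposition~\ref{gkprop2}: under the additional hypotheses $3\nmid h^+$ and $3\mid h^-$ — which are exactly the hypotheses adjoined in the corollary — that proposition states that $\lambda\ge 2$ if and only if $\alpha$ is a cube in $F_0$. Composing these two biconditionals gives $\operatorname{rank} A_1\ge 2 \iff \alpha$ is a cube, which is the assertion.

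Since the work is already done, there is no genuine obstacle; the only point to verify is that the hypotheses line up, i.e.\ that the corollary's assumptions ($3$ splits, $3\nmid h^+$, $3\mid h^-$) supply everything needed for both Theorem~\ref{gkprop} (which uses only that $3$ splits, through the Gross--Koblitz formula defining $\alpha$) and Proposition~\ref{gkprop2} (which uses all three). They do, so the proof collapses to a single sentence citing the two results.

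If a slightly more self-contained presentation were wanted, one could route the implications through $\lambda$ directly: $\alpha$ a cube $\Rightarrow \log_3\alpha\equiv 0\pmod 9 \Rightarrow \lambda\ge 2$ by Lemma~\ref{gklemma} $\Rightarrow \operatorname{rank} A_1\ge 2$ by Theorem~\ref{gkprop}, and conversely $\operatorname{rank} A_1\ge 2 \Rightarrow \lambda\ge 2 \Rightarrow \alpha$ a cube by the unramifiedness-and-Kummer-pairing argument in the proof of Proposition~\ref{gkprop2}. But this merely reproduces the two cited statements, so it is cleaner to quote them.
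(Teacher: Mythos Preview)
Your proposal is correct, and in fact the paper's proof opens with exactly this observation: the corollary follows by combining Theorem~\ref{gkprop} and Proposition~\ref{gkprop2}. However, the paper then points out that this route leans on $3$-adic $L$-functions (via the Gross--Koblitz input used in Proposition~\ref{gkprop2} to show $\lambda\ge 2\Rightarrow\alpha$ is a local cube), and devotes the rest of the proof to an independent \emph{algebraic} argument for the direction $\operatorname{rank}A_1\ge 2\Rightarrow\alpha$ is a cube. That argument runs as follows: from $\operatorname{rank}A_1\ge 2$ one finds a nontrivial splitting field $S$ for the primes above $3$ inside the Hilbert $3$-class field of $F_1$; using the $\sigma$- and $\tau$-actions one descends to a cyclic degree-$3$ extension $N'/F_0$ in which $\mathfrak p$ splits; a Kummer-generator analysis (using $3\nmid h^+$) identifies $N'L_0$ with $L_0(\varepsilon_0^{1/3})$, hence $N'$ with the degree-$3$ subfield of the Hilbert class field of $F_0$; the splitting of $\mathfrak p$ then forces $\mathfrak p^{h^-/3}$ to be principal, so $\alpha$ is a cube. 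Your citation-chaining proof is cleaner and perfectly valid; the paper's longer argument buys independence from analytic input, in keeping with the paper's theme of giving algebraic interpretations of the $L$-function criteria.
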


\begin{proof} 
This follows from combining Theorem \ref{gkprop} and Proposition \ref{gkprop2}.
However, proving that $\alpha$ is a cube when the rank is at least 2  
used 3-adic $L$-functions (in the step that $\lambda\ge 2$ implies
that $\alpha$ is a local cube). 
We can also give an algebraic proof of this result.

Since $\text{rank } A_1\ge 2$, the two primes above 3 cannot remain
fully inert in the Hilbert 3-class field $H$ of $F_1$.  Thus the splitting field $S$
for one of the primes above 3 is a nontrivial extension of $F_1$.  As
$\sigma\in\text{Gal}(F_1/\mathbf B_1)$ acts as $-1$ 
on $\text{Gal}(H/F_1)$,
this splitting field is Galois over $\mathbf B_1$.  Thus $S$ is the splitting field
for both
primes above 3.  This makes $S$ Galois over $F_0$ since $S$ 
is the maximal subextension
of $H/F_1$ in which the primes above 3 split.

Since $\tau$ has order 3 and acts on the 3-group $\text{Gal}(S/F_1)$,
it has a non-trivial quotient on which it acts trivially.   
Therefore $S_0$, the maximal abelian extension of $F_0$ contained in $S$,
is a nontrivial extension of $F_1$.  Let $N$ be a degree 3 extension of $F_1$
contained in $S_0$.  Then $N/F_0$ cannot be a cyclic degree 9 extension
because the ramification is at the bottom.  Let $N'$ be the 
inertia field for $\mathfrak p$,
one of the primes above 3 in $F_0$.  Then $N'/F_0$ is a cyclic degree 3 extension
in which $\mathfrak p$ is unramified. Since the prime above
$\mathfrak p$ splits in $N/F_1$
and $\mathfrak p$ ramifies in $N/N'$, it follows that $\mathfrak p$ splits in $N'/F_0$.

Let $\gamma$ be a Kummer generator for $L_0 N'/L_0$.
Since $\sigma$ acts by inversion on $\text{Gal}(L_0N'/L_0)$ and by inversion
on $\mu_3$, the Galois equivariance of the Kummer pairing implies that
$\sigma$ acts trivially on $\gamma$ mod cubes. By Lemma~\ref{cubelemma},
we may assume that $\gamma\in K_0$. 
Let $\mathfrak p_0$ be the prime of $K_0$ above 3 and let 
$a=v_{\mathfrak p_1}(\gamma)$. Since the extension $L_0 N'/L_0$
is unramified at $\mathfrak p$, we must have
$a\equiv 0\pmod 3$.  Moreover, the extension is unramified at all primes
not above 3, so we have $(\gamma)=I^3$ for some ideal $I$ of $K_0$.
Since $3\nmid h^+$, the ideal $I$ is principal, which means that $\gamma$
is a power of $\varepsilon_0$ times a cube.
Therefore, $N'L_0=L_0(\varepsilon_0^{1/3})$, 
and $N'/F_0$ is the cyclic
degree 3 subextension of the Hilbert class field of $F_0$ that lifts
to this subextension of the Hilbert class field of $L_0$.
Since $\mathfrak p$ splits in $N'/F_0$,
the image of $\mathfrak p$ under the Artin map is contained in the subgroup
of order $h'/3$ that fixes $N'$. Therefore, the image of 
$\mathfrak p^{h^-/3}$ is
trivial, so $\mathfrak p^{h^-/3}$ is principal. The only units in $F_0$ are
$\pm 1$, so $\mathfrak p^{h^-} =(\alpha)$ implies that $\alpha$ is a cube in $F_0$, as desired.
\end{proof}

\section{The inert case}

Suppose that 3 is inert in $F_0$ and that $3|h^-$. Then Proposition 1 says that 
$$\lambda\ge 2\iff h^-\equiv \frac{h^+\log_3 \varepsilon_0 }{\sqrt{D}} \pmod 9.$$
When $3\| h^-$, this is a much more subtle situation than the case when $3$ splits. 
We are not simply asking that a sufficiently high power of 3 divide $\log_3
\varepsilon_0$, for example. Instead, we are asking that the nonzero congruence
class of $h^-$ mod 9 match the congruence class of an expression involving
$h^+$ and $\log_3\varepsilon_0$. In contrast to most situations, what is relevant 
is not
simply the power of 3 that divides the numbers, but rather the congruence classes 
mod 3 that are obtained
when the numbers are divided by suitable powers of 3 (that is, not just the ``order
of vanishing'' but also the ``coefficient of the leading term'').
We hope to treat this situation in the future (but we make no promises).

\smallskip

\noindent
{\bf Examples.}

When $d=31$, we have $h^+=1, h^-=3$, and 
$\varepsilon_0=(29+3\sqrt{93})/2\equiv 1+6\sqrt{93}\pmod{9}$.
Therefore, 
$$
h^-=3\not\equiv \frac{h^+\log_3\varepsilon_0}{\sqrt{93}}\equiv 6\pmod 9,
$$
so $\lambda=1$.

When $d=211$, we have $h^+=1, h^-=3$, and 
$\varepsilon_0\equiv 1+3\sqrt{633}\pmod{9}$.
Therefore, 
$$
h^-=3\equiv \frac{h^+\log_3\varepsilon_0}{\sqrt{633}}\pmod 9,
$$
so $\lambda\ge 2$.

When $d=244$, we have $h^+=2, h^-=6$, and 
$\varepsilon_0\equiv 1\pmod{9}$.
Therefore, 
$$
h^-=6\not\equiv \frac{h^+\log_3\varepsilon_0}{\sqrt{732}}\equiv 0\pmod 9,
$$
so $\lambda=1$.

\end{document}